\documentclass[12pt]{article}
\usepackage{indentfirst}  
\usepackage{mathrsfs}
\usepackage{enumerate}
\usepackage[a4paper,textwidth=6.3in,textheight=8.7in]{geometry}  % matches e-jc style sheet
\usepackage[ruled,linesnumbered,vlined]{algorithm2e}
\usepackage{amsmath,amsthm,tikz,float}

\usepackage[colorlinks=true,citecolor=black,linkcolor=black,urlcolor=blue]{hyperref}
\usepackage{mathrsfs}

\newtheorem{definition}{Definition}
\newtheorem{theorem}{Theorem}
\newtheorem{observation}{Observation}

\newtheorem{lemma}[theorem]{Lemma}

\begin{document}

\title{A Note on Roman \{2\}-domination problem in graphs\thanks{Supported in part by National Natural Science Foundation of China (No. 11371008) and Science and Technology Commission of Shanghai Municipality(No. 18dz2271000)}}
\author{Hangdi Chen  and  Changhong Lu\\
School of Mathematical Sciences,\\
Shanghai Key Laboratory of PMMP,\\
East China Normal University,\\
Shanghai 200241, P. R. China\\
\\
Email: 471798693@qq.com\\
Email: chlu@math.ecnu.edu.cn}
\date{}
\maketitle
\begin{abstract}
For a graph $G=(V,E)$, a Roman $\{2\}$-dominating  function (R2DF)$f:V\rightarrow \{0,1,2\}$ has the property that for every vertex $v\in V$ with $f(v)=0$, either there exists a neighbor $u\in N(v)$, with $f(u)=2$, or at least two neighbors $x,y\in N(v)$ having $f(x)=f(y)=1$. The weight of a R2DF is the sum $f(V)=\sum_{v\in V}{f(v)}$, and the minimum weight of a R2DF is the Roman $\{2\}$-domination  number $\gamma_{\{R2\}}(G)$. A R2DF is independent if the set of vertices having positive function values is
an independent set. The independent Roman $\{2\}$-domination  number $i_{\{R2\}}(G)$ is the minimum weight of an independent Roman $\{2\}$-dominating function on $G$. In this paper, we show that the decision problem associated with $\gamma_{\{R2\}}(G)$ is NP-complete even when restricted to split graphs.  We  design a linear time algorithm for computing the value of $i_{\{R2\}}(T)$ for any  tree $T$. This answers an open problem raised by Rahmouni and Chellali [Independent Roman $\{2\}$-domination in graphs, Discrete Applied Mathematics 236 (2018), 408-414].  Chellali, Haynes, Hedetniemi and McRae \cite{chellali2016roman} have showed that Roman $\{2\}$-domination number can be computed for the class of trees in linear time. As a generalization, we present a linear time algorithm for solving the Roman $\{2\}$-domination problem in block graphs.
\end{abstract}

\section{Introduction}
Let $G=(V,E)$ be a simple graph. The \emph{open neighborhood} $N(v)$ of a vertex $v$ consists of the vertices adjacent to $v$ and its \emph{closed neighborhood} is $N[v]=N(v)\cup\{v\}$. We denote $N^2[v]=\bigcup_{u\in N[v]} N[u]$. For an edge $e=uv$, it is said that $u$ (resp. $v$) is incident to $e$, denoted by $u\in e$ (resp. $v\in e$). A \emph{vertex cover} of $G$ is a subset $V'\subseteq V$ such that for each edge $uv\in E$, at least one of $u$ and $v$ belongs to $V'$. 
 A \emph{Roman dominating function (RDF)} on graph $G$ is a function $f:V\rightarrow \{0,1,2\}$ satisfying the condition that every vertex $u$ for which $f(u)=0$ is adjacent to at least one vertex $v$ for which $f(v)=2$. The \emph{weight of a Roman dominating function} $f$ is the value $f(V)=\sum_{v\in V}{f(v)}$. The minimum weight of a Roman dominating function on a graph $G$ is called the \emph{Roman domination number} $\gamma_R(G)$ of $G$. Roman domination and its variations have been studied in a number of recent papers (see, for example, \cite{Chambers2009Extremal,Cockayne2004Roman,Liu2012Upper}).
%�� Introduce Roman domination  in here.��
%The open neighborhood $N(v)$ of a vertex $v$ consists
 %,of the vertices adjacent to $v$ and its closed neighborhood is $N[v]=N(v)\cup v$

%In a recent paper,
 Chellali, Haynes, Hedetniemi and McRae \cite{chellali2016roman}  introduced a variant of Roman dominating functions. For a graph $G=(V,E)$, a \emph{Roman $\{2\}$-dominating function} $f:V\rightarrow \{0,1,2\}$ has the slightly different property that only for every vertex $v\in V$ with $f(v)=0$, $f(N(v))\ge2$, that is, either there exists a neighbor $u\in N(v)$, with $f(u)=2$, or at least two neighbors $x,y\in N(u)$ have $f(x)=f(y)=1$. The \emph{weight of a  Roman $\{2\}$-dominating  function} is the sum $f(V)=\sum_{v\in V}{f(v)}$, and the minimum weight of a  Roman $\{2\}$-dominating  function $f$ is the \emph{Roman $\{2\}$-domination number}, denoted $\gamma_{\{R2\}}(G)$. Roman $\{2\}$-domination is also called Italian domination by some scholars (\cite{henning2017italian}).
  %Henning and Klostermeyer \cite{henning2017italian} characterized the trees $T$ for which $\gamma(T)+1=\gamma_{\{R2\}}(T)$ and $\gamma_{\{R2\}}(T)=2\gamma(T)$.
  Suppose that $f:V\rightarrow \{0,1,2\}$ is a $R2DF$ on a graph $G=(V,E)$. Let $V_i=\{v|f(v)=i\}$, for $i\in\{0,1,2\}$.  If $V_1\cup V_2$ is an independent set, then $f$ is called an \emph{independent Roman $\{2\}$-dominating  function (IR$2$DF)}, which was introduced by Rahmouni and Chellali  \cite{Rahmouni2018Independent} in a recent paper. %The weight of an independent Roman $\{2\}$-dominating  function is the sum $f(V)=\sum_{v\in V}{f(v)}$, and 
The minimum weight of an  independent Roman $\{2\}$-dominating  function $f$ is the \emph{independent Roman $\{2\}$-domination number}, denoted $i_{\{R2\}}(G)$. The authors in  \cite{chellali2016roman,Rahmouni2018Independent}  have showed that the associated  decision problems for Roman $\{2\}$-domination and independent Roman $\{2\}$-domination are NP-complete for bipartite graphs. %Rahmouni and Chellali
 In \cite{Rahmouni2018Independent}, the authors raised some interesting open problems, one of which is whether there is  a linear algorithm for computing $i_{\{R2\}}(T)$ for any tree $T$.

A graph $G=(V,E)$ is a \emph{split graph} if $V$ can be partitioned into $C$ and $I$, where $C$ is a clique and $I$ is an independent set of $G$.  Split graph  is an important  subclass of chordal graphs (see \cite{chang2013algorithmic}).  Chordal graph is one of the classical classes in the perfect graph theory, see the book by Golumbic \cite{golumbic1980algorithmic}. It turns out to be very important in the domination theory. 

A \emph{cut-vertex} is any vertex whose removal increases the number of connected components. A maximal connected induced subgraph without a cut-vertex is called a \emph{block} of $G$. A graph $G$ is a \emph{block graph} if every block in $G$ is a complete graph.  There are widely research on variations of domination in block graphs (see, for example,  \cite{chang1989total,chen2010labelling,pradhan2018computing,xu2006power}).

In this paper, we first show that the decision problem associated with $\gamma_{\{R2\}}(G)$ is NP-complete for split graphs. Then, we give a linear time algorithm for computing  $i_{\{R2\}}(T)$ in any  tree $T$. %Since $i_{\{R2\}}(T)=i_{r2}(T)$, such the algorithm also gives the value of $i_{r2}(T)$.
Moreover, we present a linear time algorithm for solving the Roman $\{2\}$-domination problem in block graphs.

\section{Complexity result}

\setlength{\parindent}{2em}In this section, we consider the decision problem associated with Roman \{2\}-dominating  functions. %For this purpose, we introduce some additional notations and definitions.
 %Let $G=(V, E)$ be a graph. For an edge $e=uv$, it is said that $u$ ($v$, respectively) is incident to $e$, denoted by $u\in e$ ($v\in e$, respectively).

 ROMAN \{2\}-DOMINATING FUNCTION(R2D)

{\bfseries INSTANCE:} A graph $G=(V,E)$ and a positive integer $k\le|V|$.

{\bfseries QUESTION:} Does $G$ have a Roman \{2\}-dominating function of weight at most $k$?

We show that this problem is NP-complete by reducing the well-known NP-complete problem Vertex Cover(VC) to R2D.

 VERTEX COVER(VC)

{\bfseries INSTANCE:} A graph $G=(V,E)$ and a positive integer $k\le|V|$.

{\bfseries QUESTION:} Is there a vertex cover of size $k$ or less for $G$?
% that is, a subset $V'\subseteq V$ such that $|V'|\le k$ and, for each edge $uv\in E$, at least one of $u$ and $v$ belongs to $V'$?

\begin{theorem}\label{thm1}
    R2D is NP-complete for split graphs.
\end{theorem}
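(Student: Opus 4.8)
The plan is to give a polynomial reduction from \textsc{Vertex Cover}. Given a VC-instance $(G,k)$ with $G=(V,E)$, $|V|=n$, $|E|=m$ (we may assume $E\neq\emptyset$, the case $E=\emptyset$ being trivial), I would construct a split graph $G'$ as follows. Its clique part is $C=\{c_v:v\in V\}$, made into a complete graph. Its independent part is $I=\{q_v:v\in V\}\cup\{w_e:e\in E\}$, where each $q_v$ is a pendant adjacent only to $c_v$, and for each edge $e=uv$ the vertex $w_e$ is adjacent exactly to $c_u$ and $c_v$. Then $G'$ is a split graph, is computable in polynomial time, and $|V(G')|=2n+m$; I would set the target weight to $k'=n+k$, noting $k'\le 2n\le |V(G')|$ so the produced instance is legal. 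Since R2D is trivially in NP (an R2DF of a given weight is a polynomial-size certificate checkable in polynomial time), it suffices to prove: $G$ has a vertex cover of size at most $k$ iff $G'$ has a Roman $\{2\}$-dominating function of weight at most $n+k$.

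For the forward direction, from a vertex cover $S$ with $|S|\le k$ I would define $f$ by $f(c_v)=2,\ f(q_v)=0$ for $v\in S$; $f(c_v)=0,\ f(q_v)=1$ for $v\notin S$; and $f(w_e)=0$ for every $e$. Then each $q_v$ with $v\in S$ has its only neighbor $c_v$ of value $2$; each $w_e$ has (as $S$ is a cover) an endpoint in $S$ whose clique vertex has value $2$; and each $c_v$ with $v\notin S$ sees in the clique some $c_u$ with $u\in S$ of value $2$ (such $u$ exists since $E\neq\emptyset$ forces $S\neq\emptyset$). Hence $f$ is an R2DF, of weight $2|S|+(n-|S|)=n+|S|\le n+k$.

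The converse is the heart of the matter. Suppose $f$ is an R2DF of $G'$ with $f(V')\le n+k$. The pendant $q_v$ forces $f(c_v)+f(q_v)\ge 1$ for every $v$: if both were $0$, the value-$0$ vertex $q_v$ would have its unique neighbor $c_v$ of value $0$, contradicting $f(N(q_v))\ge 2$. Summing these, the ``excess'' $\mathrm{exc}:=f(V')-n=\sum_{v}\bigl(f(c_v)+f(q_v)-1\bigr)+\sum_{e}f(w_e)$ is a sum of nonnegative terms and satisfies $\mathrm{exc}\le k$. Put $S_1=\{v:f(c_v)+f(q_v)\ge 2\}$ and let $S_2$ contain one endpoint of each edge $e$ with $f(w_e)\ge 1$; since each $v\in S_1$ contributes at least $1$ and each such edge contributes at least $1$ to $\mathrm{exc}$, we get $|S_1|+|S_2|\le\mathrm{exc}\le k$, hence $|S_1\cup S_2|\le k$. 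I claim $S:=S_1\cup S_2$ is a vertex cover. Let $uv\in E$. If $f(w_{uv})\ge 1$, then $S_2$ covers $uv$. Otherwise $f(w_{uv})=0$, so $f(c_u)+f(c_v)=f(N(w_{uv}))\ge 2$; were neither $u$ nor $v$ in $S_1$, we would have $f(c_u)\le 1$ and $f(c_v)\le 1$, forcing $f(c_u)=f(c_v)=1$ and therefore $f(q_u)=0$, which is impossible because then the value-$0$ vertex $q_u$ has its only neighbor $c_u$ of value $1<2$. Thus $u\in S_1$ or $v\in S_1$, so $S$ covers $uv$, completing the reduction.

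I expect the main obstacle to be realizing that the naive split graph (clique on $V$, one vertex $w_e$ per edge, $w_e\sim c_u,c_v$) does \emph{not} work: it admits the ``all ones'' R2DF of weight $n$ obtained by putting $1$ on every clique vertex, which has nothing to do with the vertex cover number of $G$, so $\gamma_{\{R2\}}(G')$ need not track $\tau(G)$. Attaching the pendants $q_v$ is precisely what defeats this: it both guarantees $f(c_v)+f(q_v)\ge 1$ everywhere (pinning down the base cost $n$) and makes it impossible for two adjacent clique vertices to each carry only $1$, which is the mechanism that ties the optimal structure of $G'$ back to a minimum vertex cover of $G$. Getting this gadget right, and then extracting the cover cleanly via the excess bound and the small contradiction above, is the only delicate part; the rest is routine verification.
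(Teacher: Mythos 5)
Your reduction is exactly the paper's gadget: the clique on $V$, a pendant vertex attached to each clique vertex, one independent vertex per edge joined to its two endpoints, and the same threshold $n+k=2k+(n-k)$; the forward direction is also verbatim the paper's. Where you genuinely diverge is the converse. The paper takes an optimal (weight $\le n+k$) R2DF and normalizes it by an extremal/exchange argument --- choosing $g$ subject to conditions (C1)--(C4) and proving six structural remarks (no pendant or edge vertex gets $2$, no edge vertex or clique vertex gets $1$, etc.) --- before reading off the cover as $\{v: g(v)=2\}$. You instead extract a cover from an \emph{arbitrary} R2DF of weight $\le n+k$ by counting: each pendant pair forces $f(c_v)+f(q_v)\ge 1$, so the excess over $n$ is at most $k$, and charging $S_1$ (heavy pairs) and $S_2$ (one endpoint per positively weighted edge vertex) against that excess, together with the short contradiction ruling out $f(c_u)=f(c_v)=1$ on an uncovered edge, yields a cover of size at most $k$. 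Your argument is shorter, avoids all reassignment case analysis, and even handles the degenerate $E=\emptyset$ / empty-cover case that the paper's forward direction silently assumes away (its construction needs a vertex of value $2$ in the clique to dominate the $0$-valued clique vertices); the paper's exchange argument, on the other hand, gives the extra structural information that an optimal R2DF on $G'$ can be assumed to use only values $2$ on the clique and $1$ on pendants, which is instructive but not needed for the equivalence. Both proofs are correct; yours is the more elementary route to the same reduction.
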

\begin{proof}
    R2D is a member of NP, since we can check in polynomial time that a function $f:V\rightarrow \{0,1,2\}$ has weight at most $k$ and is a Roman $\{2\}$-dominating function. The proof is given by reducing the VC problem in general graphs to the R2D problem in split
graphs. 

 Let $G=(V,E)$ be a graph with $V=\{v_1,v_2,\cdots,v_n\}$ and  $E=\{e_1,e_2,\cdots,e_m\}$. Let $V^1=\{v_1',v_2',\cdots,v_n'\}$. We construct the graph $G'=(V',E')$ with: \[V'=V^1\cup V\cup E,\] \[E'=\{v_iv_j | v_i\neq v_j , v_i\in V, v_j\in V \}\cup \{v_iv_i' | i=1,...,n\} \cup \{ve|v\in e,e\in E\} . \]%\cup \{ve':v\in e\}
%\[V_1=V\]
%\[V_2=\{v':vv'\in E',  v\in V\}\cup E,\]
    Notice that $G'$ is a split graph whose vertex set $V'$ is the disjoint union of the clique $V$ and the independent set $V^1\cup E$. It is clear that $G'$ can be constructed in polynomial time from $G$.

    If $G$ has a vertex cover $C$ of size at most $k$, let $f: V'\rightarrow \{0,1,2\}$ be a function  defined as follows.
      \[f(v)=\begin{cases}
        2, \text{ if }  v\in C \\
        1, \text{ if } v\in V^1, v'\in V-C  \text{ and } vv'\in E'\\
        0, \text{ otherwise}
        \end{cases}\]
 It is clear that $f$ is a Roman $\{2\}$-dominating function of $G'$ with weight at most $2k+(n-k)$.

    On the other hand, suppose that $G'$ has a Roman $\{2\}$-dominating function of weight at most $2k+(n-k)$. Among all such functions, let $g=(V_0,V_1,V_2)$ be one chosen so that
    \begin{enumerate}[(C1)]
        \item $|V^1\cap V_2|$ is minimized.
        \item Subject to Conditions (C1): $|E\cap V_0|$ is maximized.
        \item Subject to Conditions (C1) and (C2): $|V\cap V_1|$ is minimized.
        \item Subject to Conditions (C1), (C2) and (C3): the weight of $g$ is minimized.
    \end{enumerate}
%the weight of $g$ is minimized.

We make the following remarks.

    \begin{enumerate}[(i)]
        \item No vertex in $V^1$  belongs to $V_2$. Indeed, suppose to the contrary that $g(v_i')=2$ for some $i$. We reassign $0$ to $v_i'$  instead of $2$ and reassign $2$ to $v_i$. Then it  provides a R2DF on $G'$ of weight at most $2k+(n-k)$ but with less vertices of $V^1$ assigned $2$, contradicting the condition (C1) in the choice of  $g$.
        \item No vertex in $E$ belongs to $V_2$. Indeed, suppose that $g(e)=2$ for some $e\in E$ and $v_j, v_k\in e$. By reassigning $0$ to $e$ instead of $2$ and reassigning $2$ to $v_j$ instead of $g(v_j)$, we obtain a R2DF on $G'$ of weight at most $2k+(n-k)$ but with more vertices of $E$ assigned $0$,  contradicting the condition (C2) in the choice of $g$.
        \item No vertex in $E$ belongs to $V_1$. Suppose that $g(e)=1$ for some $e\in E$ and $v_j, v_k\in e$. If $g(v_j')=0$, then $g(v_j)=2$(by the definition of R2DF). By reassigning $0$ to $e$ instead of $1$, we obtain a R2DF on $G'$ of weight at most $2k+(n-k)$ but with more vertices of $E$ assigned $0$,  contradicting the condition (C2) in the choice of $g$. Hence we may assume that $g(v_j')=1$(by item(i)). Clearly we can reassign $2$ to $v_j$ instead of $0$, $0$ to $v_j'$ instead of $1$ and $0$ to $e$ instead of $1$. We also obtain a R2DF on $G'$ of weight at most $2k+(n-k)$ but with more vertices of $E$ assigned $0$, contradicting the condition (C2) in the choice of $g$.
        \item No vertex in $V$ belongs to $V_1$. Suppose to the contrary that $g(v_i)=1$ for some $i$, then $g(v_i')=1$(by item(i) and the definition of R2DF). We reassign $0$ to $v_i'$ instead of $1$ and $2$ to $v_i$ instead of $1$. It provides a R2DF on $G'$ of weight at most $2k+(n-k)$ but with less vertices of $V$ assigned $1$,  contradicting the condition (C3) in the choice of $g$.
        \item If a vertex in $V$ is assigned $2$, then its neighbor in $V^1$ is assigned $0$ by the condition (C4) in the choice of $g$.
        \item If a vertex in $V$ is assigned $0$, then its neighbor in $V^1$ is assigned $1$ by the definition of R2DF and item(i).

    \end{enumerate}

     Therefore, according to the previous items, we conclude that $V^1\cap V_2=\emptyset$, $E\subseteq V_0$, and $V\cap V_1=\emptyset$. Hence $V_2\subseteq V$. Let $C=\{v|g(v)=2\}$. Since each vertex in $E\cup (V-C)$ belongs to $V_0$ in $G'$, it is clear that $C$ is a vertex cover of $G$ by the definition of R2DF. Then $g(V^1)+g(V)+g(E)=2|C|+(n-|C|)\le 2k+(n-k)$, implying that $|C|\le k$. Consequently, $C$ is a vertex cover for $G$ of size at most  $k$.

    Since the vertex cover problem is NP-complete, the Roman $\{2\}$-domination problem is NP-complete for split graphs.
\end{proof}

\section{Independent Roman $\{2\}$-domination in trees}

In this section, a linear time dynamic programming style algorithm is given to compute the exact value of independent Roman $\{2\}$-dominating number in any tree. This algorithm is constructed using the methodology of Wimer \cite{wimer1987linear}.

%Let $T=(V,E)$ be a tree with $n$ vertices. It is well known that the vertices of $T$ have an ordering $v_1,v_2,\cdots,v_n$ such that for each $1\le i\le n-1$, $v_i$ is adjacent to exactly one $v_j$ with $j>i$ (see \cite{west2001introduction}). The ordering is called a tree ordering, where the only neighbor $v_j$ with $j>i$ is called the father of $v_i$ and $v_i$ is a child of $v_j$. For each $1\le i\le n-1$, the father of $v_i$ is denoted by $F(v_i)=v_j$. %For technical reasons, we assume that $F(v_n)=v_n$.

A \emph{rooted tree} is a pair $(T,r)$ with $T$ is a tree and $r$ is a vertex of $T$. A rooted tree $(T,r)$ is trivial if $V(T)={r}$. Given two rooted trees $(T_1,r_1)$ and $(T_2,r_2)$ with $V(T_1)\cap V(T_2)=\emptyset$, the composition of them is   $(T_1,r_1)\circ (T_2,r_2)=(T,r_1)$ with $V(T)=V(T_1)\cup V(T_2)$ and $E(T)=E(T_1)\cup E(T_2)\cup \{r_1r_2\}$. It is clear that any rooted tree can be constructed recursively from trivial rooted trees using the
defined composition.

Let $f:V(T)\rightarrow \{0,1,2\}$ be a function on $T$. Then $f$ splits two functions $f_1$ and $f_2$ according to this decomposition.  We express this as follows: $(T,f,r)=(T_1,f_1,r_1)\circ (T_2,f_2,r_2)$, where $r=r_1$, $f_1=f|_{T_1}$ and $f_2=f|_{T_2}$.  For each $1\le i\le 2$, $f|_{T_i}$ is a function that $f$ restricted to the vertices of $T_i$. On the other hand, let $f_1:V(T)\rightarrow \{0,1,2\}$ (resp. $f_2$) be a function on $T_1$ (resp. $T_2$). We can define the composition as follows: $(T_1,f_1,r_1)\circ (T_2,f_2,r_2)=(T,f,r)$, where $V(T)=V(T_1)\cup V(T_2)$, $E(T)=E(T_1)\cup E(T_2)\cup \{r_1r_2\}$, $r=r_1$ and $f=f_1\circ f_2:V(T)\rightarrow \{0,1,2\}$ with $f(v)=f_i(v)$ if $v\in V(T_i)$, $i=1,2$. Suppose that $M$ and $N$ are the sets of possible tree turples. If $(T_1,f_1,r_1)\in M$ and $(T_2,f_2,r_2)\in N$, we use $M\circ N$ to denote the set of $(T,f,r)$. In our paper, we sometimes use $T-r$ to mean $T-\{r\}$. Before presenting the algorithm, let us give the following observation.

\begin{observation}
Let $f$ be an IR2DF of $T$ and $f_1=f|_{T_1}$ (resp. $f_2=f|_{T_2}$). If $f_1(r_1)\ne 0$ (resp. $f_2(r_2)\ne 0$), then $f_1$ (resp. $f_2$) is an IR2DF of $T_1$ (resp. $T_2$).  If $f_1(r_1)=0$ (resp. $f_2(r_2)=0$), then $f_1$ (resp. $f_2$) may not be an IR2DF of $T_1$ (resp. $T_2$), but $f_1$ (resp. $f_2$) restricted to the vertices of $T_1-r_1$ (resp. $T_2-r_2$) is an IR2DF of $T_1-r_1$ (resp. $T_2-r_2$).
\end{observation}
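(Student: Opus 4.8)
The plan is to check, straight from the definitions, the two properties that make a weighting an IR$2$DF --- the inequality $h(N(v))\ge 2$ at every vertex $v$ with $h(v)=0$, and the independence of the set of positively weighted vertices --- for each of the four claimed functions. Since the assertions about $f_2$ on $T_2$ are obtained from those about $f_1$ on $T_1$ by interchanging indices, I would only write out the $f_1$ case. First I would isolate the single structural fact that the whole argument rests on: in the composition $(T,f,r)=(T_1,f_1,r_1)\circ(T_2,f_2,r_2)$, the only edge of $T$ that is not already an edge of $T_1$ or $T_2$ is $r_1r_2$. Two consequences follow at once: $T_1$ is an \emph{induced} subgraph of $T$ (every edge of $T$ with both ends in $V(T_1)$ lies in $E(T_1)$), and for each $v\in V(T_1)$ one has $N_T(v)=N_{T_1}(v)$ unless $v=r_1$, in which case $N_T(r_1)=N_{T_1}(r_1)\cup\{r_2\}$. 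Independence is then immediate in all cases, because the set of vertices with positive $f_1$-value is contained in the set of vertices with positive $f$-value, which is independent in $T$, hence independent in the induced subgraph $T_1$ (and a fortiori in $T_1-r_1$).

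It remains to handle the domination inequality. If $f_1(r_1)\ne 0$, take any $v\in V(T_1)$ with $f_1(v)=0$; then $v\ne r_1$, so $N_T(v)=N_{T_1}(v)\subseteq V(T_1)$ and $f$ agrees with $f_1$ on $N_{T_1}(v)$, whence $f_1(N_{T_1}(v))=f(N_T(v))\ge 2$ since $f$ is an R$2$DF of $T$; so $f_1$ is an R$2$DF, hence an IR$2$DF, of $T_1$. If $f_1(r_1)=0$, write $g$ for the restriction of $f_1$ to $V(T_1)\setminus\{r_1\}$ and take $v\in V(T_1-r_1)$ with $g(v)=0$; again $v\ne r_1$, so $N_T(v)=N_{T_1}(v)$, and $N_{T_1}(v)$ differs from $N_{T_1-r_1}(v)$ at most in the vertex $r_1$, on which $f$ is zero, so $g(N_{T_1-r_1}(v))=f(N_{T_1}(v))=f(N_T(v))\ge 2$; thus $g$ is an R$2$DF, hence an IR$2$DF, of $T_1-r_1$. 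The remaining clause --- that $f_1$ itself need \emph{not} be an IR$2$DF of $T_1$ when $f_1(r_1)=0$ --- carries no proof obligation, but I would append a one-line example, e.g. $T_1$ the single edge $r_1u$ with $f_1(r_1)=0$ and $f_1(u)=1$, which is part of a legitimate IR$2$DF of $T$ only because $r_2$ supplies the weight missing at $r_1$, to show the hypothesis cannot simply be dropped.

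The argument is essentially bookkeeping, so I do not expect a genuine obstacle; the one point that needs care is the role of the root $r_1$, since it is the unique vertex whose neighborhood grows under the composition, and the hypotheses on $f_1(r_1)$ are used precisely to control what happens there --- either $r_1$ is not a $0$-vertex (so its enlarged neighborhood is never invoked), or $r_1$ is a $0$-vertex whose weight contribution to its neighbors is $0$ (so deleting it changes no relevant sum). Keeping the two ``resp.'' cases aligned throughout is the only other thing to watch.
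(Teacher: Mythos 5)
Your verification is correct: the key structural fact (the only edge added in the composition is $r_1r_2$, so $T_1$ is induced in $T$ and only $r_1$'s neighborhood changes) is exactly what makes both the independence and the $f(N(v))\ge 2$ checks go through, and your treatment of the $f_1(r_1)=0$ case correctly uses that $r_1$ contributes $0$ to its neighbors' sums. The paper states this Observation without proof, treating it as immediate from the definitions, and your argument is precisely the routine verification being omitted, so there is no divergence in approach.
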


  In order to construct an algorithm for computing independent Roman $\{2\}$-domination number, we must characterize the possible tree-subset tuples $(T,f,r)$. For this purpose,
we introduce some additional notations as follows:

\noindent%$f\backslash v$=$f|_{T-\{v\}}$;\\
$IR2DF(T)=\{ f~|~ f$ is an IR2DF of $T\}$;\\
$IR2DF_r(T)=\{ f~|~ f\notin IR2DF(T) $ and $f|_{T-r}\in IR2DF(T-r)\}$.

Then we consider the following five classes:

\noindent$A=\{(T,f,r)~|~ f\in IR2DF(T)$ and $f(r)=2\}$;\\
$B=\{(T,f,r)~|~ f\in IR2DF(T)$ and $f(r)=1\}$;\\
$C=\{(T,f,r)~|~ f\in IR2DF(T)$ and $f(r)=0\}$;\\
$D=\{(T,f,r)~|~ f\in IR2DF_r(T)$ and $f(N[r])=1\}$;\\% $f(r)=0$. If $f^*(r')=1$, then $f^*$ is an IR2DF of $T'\}$;\\
$E=\{(T,f,r)~|~ f\in IR2DF_r(T)$ and $f(N[r])=0\}$.%f$ isn't an IR2DF of $T$ with $f(r)=0$. If $f^*(r')=2$, then $f^*$ is an IR2DF of $T'\}-[d]$.

Next, we provide some Lemmas.
\begin{lemma}\label{lem2}
     $A=(A\circ C)\cup (A\circ D)\cup (A\circ E)$.
\end{lemma}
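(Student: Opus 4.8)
The plan is to prove the set equality $A=(A\circ C)\cup (A\circ D)\cup (A\circ E)$ by double inclusion, analyzing how an IR2DF on $T$ with $f(r)=2$ decomposes across the composition $(T,r)=(T_1,r_1)\circ(T_2,r_2)$, and conversely how such IR2DFs are built up.

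For the inclusion $A\subseteq(A\circ C)\cup(A\circ D)\cup(A\circ E)$: take $(T,f,r)\in A$, so $f\in IR2DF(T)$ and $f(r)=2$. Write $f_1=f|_{T_1}$, $f_2=f|_{T_2}$; since $r=r_1$ we have $f_1(r_1)=2$. Because $f$ is an IR2DF and $V_1\cup V_2$ is independent, and $r_1r_2\in E(T)$ with $f(r_1)=2$, we must have $f_2(r_2)=0$. By the Observation, $f_1$ restricted to $T_1$: since $f_1(r_1)=2\neq 0$, $f_1$ is an IR2DF of $T_1$, hence $(T_1,f_1,r_1)\in A$. Now I case on $f_2$. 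If $f_2\in IR2DF(T_2)$, then since $f_2(r_2)=0$ we get $(T_2,f_2,r_2)\in C$, giving the first term. If $f_2\notin IR2DF(T_2)$, then by the Observation $f_2|_{T_2-r_2}\in IR2DF(T_2-r_2)$, so $f_2\in IR2DF_{r_2}(T_2)$; the only way $f_2$ fails to IR2-dominate $T_2$ while its restriction works is that $r_2$ itself is not satisfied, i.e. $f_2(r_2)=0$ and $f_2(N_{T_2}(r_2))\le 1$. In $T$, the extra neighbor $r_1$ contributes $f(r_1)=2$, so $r_2$ is satisfied in $T$ regardless of whether $f_2(N[r_2])$ is $0$ or $1$; both values are consistent with $f$ being an IR2DF. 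These two subcases give $(T_2,f_2,r_2)\in D$ and $(T_2,f_2,r_2)\in E$ respectively, yielding the second and third terms. One must also check independence of $V_1\cup V_2$ passes to each piece (it does, being an induced-subgraph property) and that $f=f_1\circ f_2$, which holds by construction.

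For the reverse inclusion: suppose $(T,f,r)\in(A\circ C)\cup(A\circ D)\cup(A\circ E)$, so $(T,r)=(T_1,r_1)\circ(T_2,r_2)$ with $(T_1,f_1,r_1)\in A$ and $(T_2,f_2,r_2)\in C\cup D\cup E$, and $f=f_1\circ f_2$, $r=r_1$. Then $f(r)=f_1(r_1)=2$, and in all three cases $f_2(r_2)=0$, so the new edge $r_1r_2$ joins a $2$-vertex to a $0$-vertex: independence of $V_1\cup V_2$ is preserved (no positive–positive edge is created), and $f(r)=2$ makes the condition at $r_1$ trivial. It remains to check $f$ is an IR2DF of $T$. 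Every vertex of $T_1$ with value $0$ other than possibly $r_1$ (but $f(r_1)=2$) is satisfied already within $T_1$ since $f_1\in IR2DF(T_1)$, and its neighborhood in $T$ equals its neighborhood in $T_1$ except for $r_1$, which only helps. For vertices of $T_2$: in the $C$ case $f_2$ already IR2-dominates all of $T_2$, and the added neighbor $r_1$ (value $2$) only helps; in the $D$ and $E$ cases $f_2|_{T_2-r_2}$ IR2-dominates $T_2-r_2$, so every $0$-vertex of $T_2$ except $r_2$ is satisfied inside $T_2$ (adding $r_1$ does not hurt), and $r_2$ itself, having $f(r_2)=0$, now sees $r_1$ with $f(r_1)=2$, so $r_2$ is satisfied. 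Hence $f\in IR2DF(T)$ with $f(r)=2$, i.e. $(T,f,r)\in A$.

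The main obstacle, and the point deserving the most care, is the bookkeeping around $IR2DF_r$ in the $D$ and $E$ cases: one must argue precisely that the only possible ``defect'' of $f_2$ as an IR2DF of $T_2$ is at the root $r_2$ (this is exactly what $f_2\in IR2DF_{r_2}(T_2)$ encodes, together with $f_2(r_2)=0$), and that attaching a value-$2$ neighbor $r_1$ repairs this defect without creating any new one or violating independence. A secondary subtlety is confirming that no vertex of $T_1$ other than $r_1$ could have been relying on $r_2$ (it cannot, since in $T$ the only cross edge is $r_1r_2$), so the IR2DF property of $f_1$ on $T_1$ transfers verbatim to those vertices in $T$. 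Once these local checks are in place, the equality follows; everything else is the routine verification that $f=f_1\circ f_2$ and that independence is an induced-hereditary property.
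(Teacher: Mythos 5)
Your proof is correct and follows essentially the same route as the paper: both directions of the set equality are handled by splitting $f$ along the composition, using the Observation to see that $f_1\in IR2DF(T_1)$ (since $f_1(r_1)=2$), independence to force $f_2(r_2)=0$, and the trichotomy $f_2\in IR2DF(T_2)$ versus $f_2\in IR2DF_{r_2}(T_2)$ with $f_2(N_{T_2}[r_2])\in\{1,0\}$ to land in $C$, $D$, or $E$. In fact you spell out the reverse inclusion (independence preserved by the $2$--$0$ edge $r_1r_2$, the root $r_2$ repaired by its new value-$2$ neighbor) in more detail than the paper, which dismisses it as clear.
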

\begin{proof}
Let $(T,r)=(T_1,r_1)\circ (T_2,r_2)$ and $r=r_1$. We first show that $(A\circ C)\cup (A\circ D)\cup (A\circ E)\subseteq A$.
Suppose that  $f_1$ (resp. $f_2$) is a function on $T_1$ (resp. $T_2$). Define $f$ as the function on $T$ with $f|_{T{}_1}=f_1$
and $f|_{T{}_2}=f_2$. It is clear that the following items are true.

   \begin{enumerate}[(i)]
        \item If $(T_1,f_1,r_1)\in A$ and $(T_2,f_2,r_2)\in C$, then $(T_1,f_1,r_1)\circ(T_2,f_2,r_2)\in A$.%from the construction of $(T,f,r)$, we know that $f$ is an IR2DF of $T$ and $f(r)=f(r_1)=2$. We deduce that $(T_1,f_1,r_1)\circ(T_2,f_2,r_2)\in A$.
        \item If $(T_1,f_1,r_1)\in A$ and $(T_2,f_2,r_2)\in D$, then $(T_1,f_1,r_1)\circ(T_2,f_2,r_2)\in A$.%then it is clear that $f$ is an IR2DF of $T$ and $f(r)=f(r_1)=2$. So $(T_1,f_1,r_1)\circ(T_2,f_2,r_2)\in A$.
        \item If $(T_1,f_1,r_1)\in A$ and $(T_2,f_2,r_2)\in E$, then $(T_1,f_1,r_1)\circ(T_2,f_2,r_2)\in A$.%then $f$ is an IR2DF of $T$ and $f(r)=f(r_1)=2$. We obtain $(T_1,f_1,r_1)\circ(T_2,f_2,r_2)\in A$.
    \end{enumerate}
%Therefore, $[a]\circ[c]\cup [a]\circ [d]\cup [a]\circ [e]\subseteq [a]$.

 Now we prove  that $A\subseteq(A\circ C)\cup (A\circ D)\cup (A\circ E)$. Let $(T,f,r)\in A$ and $(T,f,r)=(T_1,f_1,r_1)\circ (T_2,f_2,r_2)$, then $f_1(r_1)=f(r)=2$. Since $f\in IR2DF(T)$ and $f_1=f|_{T_1}$, then $f_1\in IR2DF(T_1)$. 
So $(T_1,f_1,r_1)\in A$. From the independence of $V_1\cup V_2$, we  have $f_2(r_2)=f(r_2)=0$. If $f_2\in IR2DF(T_2),$ 
then we obtain  $(T_2,f_2,r_2)\in C$. If $f_2\notin IR2DF(T_2),$ then $(T_2,f_2,r_2)\in D$ or $ E$.  Hence, we conclude that  $A\subseteq(A\circ C)\cup (A\circ D)\cup (A\circ E)$. %This completes the proof.Consequently,
\end{proof}

\begin{lemma}\label{lem3}
    $B=(B\circ C)\cup (B\circ D)$.
\end{lemma}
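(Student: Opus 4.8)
The plan is to mimic the proof of Lemma~\ref{lem2}, establishing the two inclusions $(B\circ C)\cup(B\circ D)\subseteq B$ and $B\subseteq(B\circ C)\cup(B\circ D)$ separately, with $(T,r)=(T_1,r_1)\circ(T_2,r_2)$ and $r=r_1$. For the forward inclusion, I would write $f=f_1\circ f_2$ and treat the two cases: if $(T_1,f_1,r_1)\in B$ and $(T_2,f_2,r_2)\in C$, then $f(r)=f_1(r_1)=1$; independence of $V_1\cup V_2$ in $T$ holds because both restrictions are independent IR2DFs and the new edge $r_1r_2$ joins a vertex of positive weight ($f(r_1)=1$) to a vertex of weight $0$ ($f(r_2)=0$), so no edge inside $V_1\cup V_2$ is created; and every vertex with $f$-value $0$ still has its R2DF demand met since adding the edge $r_1r_2$ can only increase the weighted sum over a neighborhood. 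Hence the composite lies in $B$. The case $(T_2,f_2,r_2)\in D$ is similar: $f_2\notin IR2DF(T_2)$ but the only deficiency is at $r_2$, which had $f_2(N[r_2])=1$; adding the edge to $r_1$ with $f(r_1)=1$ raises $f(N[r_2])$ to at least $2$, curing the deficiency, so $f\in IR2DF(T)$ and $f(r)=1$, i.e. the composite is in $B$.

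For the reverse inclusion, take $(T,f,r)\in B$ and split as $(T_1,f_1,r_1)\circ(T_2,f_2,r_2)$. Then $f_1(r_1)=f(r)=1$, and since $f_1$ is the restriction of an IR2DF it is itself an IR2DF of $T_1$ (Observation~1, as $f_1(r_1)\ne 0$), so $(T_1,f_1,r_1)\in B$. By independence of $V_1\cup V_2$ and $f(r_1)=1$ we get $f(r_2)=0$, so $f_2(r_2)=0$. If $f_2\in IR2DF(T_2)$ then $(T_2,f_2,r_2)\in C$ and we are done; otherwise $f_2\notin IR2DF(T_2)$, and by Observation~1 the restriction of $f_2$ to $T_2-r_2$ is an IR2DF of $T_2-r_2$, so $(T_2,f_2,r_2)\in IR2DF_r(T_2)$. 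It remains to pin down which class: the deficiency can only be at $r_2$ (every other vertex of $T_2$ has the same closed neighborhood in $T_2$ as in $T$), so in $T$ the demand at $r_2$ is met, meaning $f(N_T[r_2])\ge 2$; subtracting the contribution $f(r_1)=1$ of the single extra neighbor $r_1$ gives $f_2(N_{T_2}[r_2])\ge 1$. Thus $(T_2,f_2,r_2)\in D$ (the case $f_2(N[r_2])=0$ being impossible), completing $B\subseteq(B\circ C)\cup(B\circ D)$.

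The one point needing care — and the main obstacle — is ruling out $(T_2,f_2,r_2)\in E$ in the reverse direction, i.e. showing that when $f(r)=1$ the bottom subtree cannot have $f_2(N[r_2])=0$. This is exactly where the value $f(r)=1$ (rather than $2$) matters: a single extra neighbor contributing weight $1$ can lift $f_2(N[r_2])$ from $1$ to $2$ but not from $0$ to $2$, so if $f_2(N[r_2])=0$ the vertex $r_2$ would remain undominated in $T$, contradicting $f\in IR2DF(T)$. I would state this explicitly rather than leaving it to "it is clear". Everything else — independence, and the R2DF demand at vertices other than $r_2$ — is routine and can be dispatched with the same one-line arguments used in Lemma~\ref{lem2}, since adding the single edge $r_1r_2$ only enlarges neighborhoods and the positive-weight set stays independent.
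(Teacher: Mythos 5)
Your proof is correct and takes essentially the same route as the paper's: both inclusions are argued identically, with the reverse direction using $f_1(r_1)=f(r)=1$, independence forcing $f_2(r_2)=0$, and the R2DF condition at $r_2$ in $T$ (one extra neighbor of weight $1$ cannot compensate $f_2(N_{T_2}[r_2])=0$) to rule out class $E$ and land in $D$. You simply make explicit the verifications the paper leaves as ``easy to check,'' including the point that the only possible deficiency of $f_2$ is at $r_2$, which is exactly how the paper obtains $f_2(N_{T_2}[r_2])=1$.
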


\begin{proof}
Let $(T,r)=(T_1,r_1)\circ (T_2,r_2)$ and $r=r_1$. We first show that $(B\circ C)\cup (B\circ D)\subseteq B$.
Suppose that  $f_1$ (resp. $f_2$) is a function on $T_1$ (resp. $T_2$). Define $f$ as the function on $T$ with $f|_{T{}_1}=f_1$
and $f|_{T{}_2}=f_2$.%$f|_T{}_1=f_1$
%and $f|_T{}_2=f_2$.
   \begin{enumerate}[(i)]
        \item If $(T_1,f_1,r_1)\in B$ and $(T_2,f_2,r_2)\in C$, then $(T_1,f_1,r_1)\circ(T_2,f_2,r_2)\in B$.
        \item If $(T_1,f_1,r_1)\in B$ and $(T_2,f_2,r_2)\in D$, then $(T_1,f_1,r_1)\circ(T_2,f_2,r_2)\in B$.
    \end{enumerate}
    It is easy to check the previous items. Then we conclude that $(B\circ C)\cup (B\circ D)\subseteq B$.

Next we need to show  $B\subseteq(B\circ C)\cup (B\circ D)$ . Let $(T,f,r)\in B$ and $(T,f,r)=(T_1,f_1,r_1)\circ (T_2,f_2,r_2)$, then $f_1(r_1)=f(r)=1$. It is clear that $f_1\in IR2DF(T_1).$ So we conclude that $(T_1,f_1,r_1)\in B$. From the definition of IR2DF, we must have $f_2(r_2)=f(r_2)=0$. If $f_2\in IR2DF(T_2),$ then we obtain  $(T_2,f_2,r_2)\in C$. If $f_2\notin IR2DF(T_2),$ then $f_2(N_{T_2}[r_2])=1$ and $f_2|_{T_2-r_2}\in IR2DF(T_2-r_2)$ using the fact that $(T,f,r)\in B$. Therefore, we have $f_2\in IR2DF_{r_2}(T_2)$, implying that $(T_2,f_2,r_2)\in D$. Hence, we deduce that  $B\subseteq(B\circ C)\cup (B\circ D).$
\end{proof}

\begin{lemma}\label{lem4}
 $C=(C\circ A)\cup (C\circ B)\cup (C\circ C)\cup (D\circ A)\cup (D\circ B)\cup (E\circ A)$.
\end{lemma}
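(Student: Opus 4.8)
The plan is to follow exactly the same two-directional template used in Lemmas \ref{lem2} and \ref{lem3}: first prove the easy inclusion $\supseteq$ by checking that each composition on the right-hand side lands in $C$, and then prove the harder inclusion $\subseteq$ by taking an arbitrary $(T,f,r)\in C$, splitting it as $(T_1,f_1,r_1)\circ(T_2,f_2,r_2)$ with $r=r_1$, and arguing that the pair $((T_1,f_1,r_1),(T_2,f_2,r_2))$ must fall into one of the six listed product classes. Throughout, write $V_1,V_2$ for the sets where $f$ takes values $1,2$, and similarly for $f_1,f_2$.

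For the inclusion $\supseteq$, I would verify the six cases one at a time. In each case $f_1(r_1)\in\{0,1,2\}$ depending on whether $(T_1,f_1,r_1)$ is in $C$, $D/E$ (value $0$ at $r_1$), $A$ (value $2$), or $B$ (value $1$), and the key observations are: (a) the new edge $r_1r_2$ does not create a dependency among positively-weighted vertices, because in every listed product at least one of $r_1,r_2$ has $f$-value $0$ (in $C\circ A$, $C\circ B$, $C\circ C$ the vertex $r_1$ has value $0$; in $D\circ A$, $D\circ B$, $E\circ A$ it again has value $0$, since $D,E\subseteq IR2DF_{r}$ with $f(r)=0$); (b) every vertex of $T_2-r_2$ that needs to be dominated is already dominated inside $T_2$, and symmetrically for $T_1-r_1$; (c) the root $r=r_1$ has $f(r)=0$ and gets enough weight in its neighborhood: when $(T_1,f_1,r_1)\in C$ this is because $r_1$ was already dominated in $T_1$, and when $(T_1,f_1,r_1)\in D$ (resp. $E$) the deficiency $f(N_{T_1}[r_1])=1$ (resp. $0$) is made up by the contribution of $r_2$, which is $2$ in the $D\circ A$, $E\circ A$ products and $1$ in $D\circ B$ — note $E\circ B$ is correctly absent because $0+1<2$, and $E\circ C$ is absent because $0+0<2$; (d) finally $r_2$ itself, if it has $f$-value $0$, is dominated: in $C\circ A$, $D\circ A$, $E\circ A$ by $r_1$ having value $2$, in $C\circ B$, $D\circ B$ by $r_1$ having value $1$ together with whatever $T_2$ already supplied (here one uses that $(T_2,f_2,r_2)\in B$ means $r_2$ has positive value, so actually $r_2$ needs no domination), and in $C\circ C$ because $r_2$ was already dominated inside $T_2$. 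This confirms the composition lies in $C$ in all six cases.

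For the inclusion $\subseteq$, let $(T,f,r)\in C$, so $f\in IR2DF(T)$ and $f(r)=0$, and decompose $f=f_1\circ f_2$. By the Observation, since $f_1(r_1)=f(r)=0$, $f_1$ need not be an IR2DF of $T_1$, but $f_1|_{T_1-r_1}\in IR2DF(T_1-r_1)$; there are two cases. If $f_1\in IR2DF(T_1)$ then $(T_1,f_1,r_1)\in C$, and I then classify $f_2$: it is an IR2DF of $T_2$ (by the Observation, whatever $f_2(r_2)$ is — if $f_2(r_2)\neq0$ directly, and if $f_2(r_2)=0$ then $r_2$ is dominated within $T_2$ unless it relied on $r_1$, but $f(r_1)=0$ so it did not), so $(T_2,f_2,r_2)\in A\cup B\cup C$ according to $f_2(r_2)\in\{2,1,0\}$, giving $C\circ A$, $C\circ B$, or $C\circ C$. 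If instead $f_1\notin IR2DF(T_1)$, then $r_1$ is the only vertex of $T_1$ failing the condition, so $f(N_{T_1}[r_1])\le 1$ and the missing weight must come across the edge from $r_2$; hence $f(r_2)\ge 1$, so $(T_2,f_2,r_2)\in A\cup B$, and moreover $f_2\in IR2DF(T_2)$ (it has positive value at $r_2$, so the Observation applies). Now $f(N_{T_1}[r_1])+f(r_2)\ge 2$ forces: either $f(r_2)=2$, giving $(T_1,f_1,r_1)\in D\cup E$ and hence $D\circ A$ or $E\circ A$; or $f(r_2)=1$, which forces $f(N_{T_1}[r_1])\ge 1$, i.e. $(T_1,f_1,r_1)\in D$, giving $D\circ B$. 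The edge $r_1r_2$ does not spoil independence because $f(r_1)=0$. This exhausts all possibilities and proves $C\subseteq(C\circ A)\cup(C\circ B)\cup(C\circ C)\cup(D\circ A)\cup(D\circ B)\cup(E\circ A)$.

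The only genuinely delicate point — the one I would write out most carefully — is the bookkeeping in the $f_1\notin IR2DF(T_1)$ case: one must argue that the \emph{unique} defect of $f_1$ is at $r_1$ (which follows because $f_1|_{T_1-r_1}$ is already an IR2DF of $T_1-r_1$, so no other vertex can fail), quantify exactly how much weight $r_2$ must contribute to repair it, and check that $E\circ B$ and $E\circ C$ are genuinely impossible (weight $0$ at $N[r_1]$ plus at most $1$ from $r_2$ can never reach the required $2$), which is precisely why those two products are absent from the statement. Everything else is the same routine verification of the IR2DF conditions across the bridging edge that was carried out in Lemmas \ref{lem2} and \ref{lem3}.
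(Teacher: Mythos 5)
Your proof is correct and follows essentially the same two-inclusion argument as the paper; the paper merely organizes the $\subseteq$ direction by cases on $f(r_2)$ rather than on whether $f_1\in IR2DF(T_1)$, which yields the same classification into $C\circ A$, $C\circ B$, $C\circ C$, $D\circ A$, $D\circ B$, $E\circ A$. One small wording slip: in your item (d) the phrase ``by $r_1$ having value $2$'' should refer to $r_2$ (in $C\circ A$, $D\circ A$, $E\circ A$ it is $r_2$ that carries the value $2$, so it needs no domination), but this does not affect the argument.
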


\begin{proof}
Let $(T,r)=(T_1,r_1)\circ (T_2,r_2)$ and $r=r_1$. We first show that $(C\circ A)\cup (C\circ B)\cup (C\circ C)\cup (D\circ A)\cup (D\circ B)\cup (E\circ A)\subseteq C$ .
Suppose that  $f_1$ (resp. $f_2$) is a function on $T_1$ (resp. $T_2$). Define $f$ as the function on $T$ with $f|_{T{}_1}=f_1$
and $f|_{T{}_2}=f_2$. It is easy to check the following remarks by definitions.

    \begin{enumerate}[(i)]
        \item If $(T_1,f_1,r_1)\in C$ and $(T_2,f_2,r_2)\in A$, then $(T_1,f_1,r_1)\circ(T_2,f_2,r_2)\in C$. 
         \item If $(T_1,f_1,r_1)\in C$ and $(T_2,f_2,r_2)\in B$, then $(T_1,f_1,r_1)\circ(T_2,f_2,r_2)\in C$. 
         \item If $(T_1,f_1,r_1)\in C$ and $(T_2,f_2,r_2)\in C$, then $(T_1,f_1,r_1)\circ(T_2,f_2,r_2)\in C$.  
         \item If $(T_1,f_1,r_1)\in D$ and $(T_2,f_2,r_2)\in A$, then $(T_1,f_1,r_1)\circ(T_2,f_2,r_2)\in C$. 
          \item If $(T_1,f_1,r_1)\in D$ and $(T_2,f_2,r_2)\in B$, then $(T_1,f_1,r_1)\circ(T_2,f_2,r_2)\in C$. 
           \item If $(T_1,f_1,r_1)\in E$ and $(T_2,f_2,r_2)\in A$, then $(T_1,f_1,r_1)\circ(T_2,f_2,r_2)\in C$. 
    \end{enumerate}
%Hence, we deduce that $(C\circ A)\cup (C\circ B)\cup (C\circ C)\cup (D\circ A)\cup (D\circ B)\cup (E\circ A)\subseteq C$ .

Therefore, we need to prove $C\subseteq(C\circ A)\cup (C\circ B)\cup (C\circ C)\cup (D\circ A)\cup (D\circ B)\cup (E\circ A).$ Let $(T,f,r)\in C$ and $(T,f,r)=(T_1,f_1,r_1)\circ (T_2,f_2,r_2)$, then $f\in IR2DF(T)$ and $f_1(r_1)=f(r)=0$. Consider the following cases.

{\bfseries Case 1.} $f(r_2)=2.$ Using the fact that $f\in IR2DF(T)$ and $f_2=f|_{T_2}$, then $f_2\in IR2DF(T_2)$. It means that $(T_2,f_2,r_2)\in A$. If $f_1\in IR2DF(T_1)$, then we obtain that $(T_1,f_1,r_1)\in C$.
If $f_1\notin IR2DF(T_1)$, we have $(T_1,f_1,r_1)\in D$ or $ E$.

{\bfseries Case 2.} $f(r_2)=1.$ Since $f\in IR2DF(T)$ and $f_2=f|_{T_2}$, then $f_2\in IR2DF(T_2)$. So $(T_2,f_2,r_2)\in B$. If $f_1\in IR2DF(T_1)$, then we deduce  $(T_1,f_1,r_1)\in C$. If $f_1\notin IR2DF(T_1)$, therefore,  it implies that $(T_1,f_1,r_1)\in D$.

{\bfseries Case 3.} $f(r_2)=0.$ It is clear that $f_1$ and $f_2$ are both IR2DF. Then we obtain that $(T_1,f_1,r_1)\in C$ and $(T_2,f_2,r_2)\in C$.

Therefore, we obtain $C\subseteq(C\circ A)\cup (C\circ B)\cup (C\circ C)\cup (D\circ A)\cup (D\circ B)\cup (E\circ A).$
\end{proof}

\begin{lemma}\label{lem5}
   $D=(D\circ C)\cup (E\circ B)$.
\end{lemma}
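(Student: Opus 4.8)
The plan is to follow the same two-inclusion template used in Lemmas~\ref{lem2}--\ref{lem4}. Write $(T,r)=(T_1,r_1)\circ(T_2,r_2)$ with $r=r_1$, and recall that for $(T,f,r)\in D$ we have $f\in IR2DF_r(T)$, meaning $f$ itself is \emph{not} an IR2DF of $T$ but $f|_{T-r}$ is an IR2DF of $T-r$, together with the weight condition $f(N[r])=1$. The failure of $f$ to be an IR2DF must be witnessed at $r$ itself: since $f|_{T-r}$ is an IR2DF of $T-r$, the only vertex that can violate the R2DF condition in $T$ is $r$, so $f(r)=0$ and $f(N_T(r))\le 1$; combined with $f(N[r])=1$ this forces $f(r)=0$, $f(N_T(r))=1$.

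\textbf{The easy inclusion.} First I would show $(D\circ C)\cup(E\circ B)\subseteq D$. Suppose $(T_1,f_1,r_1)\in D$ and $(T_2,f_2,r_2)\in C$, and let $f=f_1\circ f_2$. Then $f(r)=f_1(r_1)=0$ and, since $f_2(r_2)=0$, adding the edge $r_1r_2$ contributes nothing to $f(N_T[r])$, so $f(N_T[r])=f_1(N_{T_1}[r_1])=1$; moreover $f|_{T-r}$ is obtained by gluing the IR2DF $f_1|_{T_1-r_1}$ to the IR2DF $f_2$ along the edge $r_1r_2$ where $f_2(r_2)=0$, so it is still an IR2DF of $T-r$ (the vertex $r_2$ already had its demand met inside $T_2$), while $f\notin IR2DF(T)$ because $r$ still fails. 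Independence of the positive set is preserved since $f_1(r_1)=0$. Hence $(T,f,r)\in D$. For $(T_1,f_1,r_1)\in E$ and $(T_2,f_2,r_2)\in B$: here $f_1(N_{T_1}[r_1])=0$, so $f_1(r_1)=0$, and $f_2(r_2)=1$, so $f(N_T[r])=0+1=1$; adding the new edge $r_1r_2$ gives $r$ one unit of "help'' from $r_2$, but since $f(N_T(r))=1<2$ and $f(r)=0$, vertex $r$ is still not satisfied, so $f\notin IR2DF(T)$; every other vertex of $T$ is fine because $f_1\in IR2DF_{r_1}(T_1)$ and $f_2\in IR2DF(T_2)$; independence again holds as $f_1(r_1)=0$. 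So $(T,f,r)\in D$.

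\textbf{The reverse inclusion.} Now let $(T,f,r)\in D$ and split $(T,f,r)=(T_1,f_1,r_1)\circ(T_2,f_2,r_2)$. As noted, $f(r)=0$ and $f(N_T(r))=1$. By the Observation, $f_1=f|_{T_1}$ restricted to $T_1-r_1$ is an IR2DF of $T_1-r_1$ (since $f_1(r_1)=0$), and similarly for $f_2$ on $T_2-r_2$; also $f_2(r_2)=f(r_2)\in\{0,1\}$ because the single unit of weight in $N_T(r)$ is distributed among the $T_1$-neighbors of $r$ and possibly $r_2$. I will case on $f_2(r_2)$.
\begin{enumerate}[(1)]
\item $f_2(r_2)=0$: then all the weight on $N_T(r)$ sits inside $T_1$, so $f_1(N_{T_1}[r_1])=1$; since $f|_{T-r}$ is an IR2DF of $T-r$, its restriction $f_2$ to $T_2$ is an IR2DF of $T_2$ (every vertex of $T_2$, including $r_2$, is satisfied using only $T_2$-neighbors, as the only cross edge goes to $r_1$ which carries weight $0$), so $(T_2,f_2,r_2)\in C$. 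Also $f_1\notin IR2DF(T_1)$: if it were, then together with $f_2(r_2)=0$ the combined $f$ would make $r$ satisfied only if $f_1(N_{T_1}[r_1])\ge 2$, contradiction, so $r_1$ fails in $T_1$ and $f_1\in IR2DF_{r_1}(T_1)$; hence $(T_1,f_1,r_1)\in D$ and $(T,f,r)\in D\circ C$.
\item $f_2(r_2)=1$: then exactly the one unit of $f(N_T(r))$ comes from $r_2$, so $f_1(N_{T_1}[r_1])=0$, i.e.\ $f_1(r_1)=0$ and all $T_1$-neighbors of $r_1$ have weight $0$. Since $f|_{T-r}\in IR2DF(T-r)$, its restriction to $T_2$ is an IR2DF of $T_2$ with $f_2(r_2)=1$, so $(T_2,f_2,r_2)\in B$; and $f_1$, being $f|_{T_1}$ with $r_1$ unsatisfied (value $0$, no positive neighbor) but $f_1|_{T_1-r_1}$ an IR2DF, lies in $IR2DF_{r_1}(T_1)$ with $f_1(N_{T_1}[r_1])=0$, so $(T_1,f_1,r_1)\in E$. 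Thus $(T,f,r)\in E\circ B$.
\end{enumerate}

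\textbf{Main obstacle.} The only delicate point is the bookkeeping for "why $f_1\notin IR2DF(T_1)$'' in case~(1): one must argue that the \emph{unique} failure of $f$ on $T$ is at $r$, push this to the statement that $r_1$ fails in $T_1$, and be careful that no vertex of $T_1$ other than $r_1$ could have been relying on the edge to $r_2$ (it cannot, since $r_1$ is the only $T_1$-vertex incident to that edge). Once that is pinned down, all remaining verifications are immediate from the definitions of $A$--$E$ and the Observation.
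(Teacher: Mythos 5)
Your proof is correct and takes essentially the same route as the paper: both inclusions are verified from the definitions, and the reverse inclusion splits on $f(r_2)\in\{0,1\}$, producing $E\circ B$ and $D\circ C$ exactly as in the paper's two cases. (One cosmetic remark: since $r=r_1$, the graph $T-r$ is the disjoint union of $T_1-r_1$ and $T_2$, so the facts you attribute to ``gluing along $r_1r_2$'' and to the Observation follow immediately from the definition of $IR2DF_r(T)$ restricted to components.)
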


\begin{proof}
Let $(T,r)=(T_1,r_1)\circ (T_2,r_2)$ and $r=r_1$. We first show that $(D\circ C)\cup (E\circ B)\subseteq D$.
Suppose that  $f_1$ (resp. $f_2$) is a function on $T_1$ (resp. $T_2$). Define $f$ as the function on $T$ with $f|_{T{}_1}=f_1$
and $f|_{T{}_2}=f_2$. It is easy to check the following remarks by definitions.

    \begin{enumerate}[(i)]
        \item  If $(T_1,f_1,r_1)\in D$ and $(T_2,f_2,r_2)\in C$, then $(T_1,f_1,r_1)\circ(T_2,f_2,r_2)\in D$. 
         \item If $(T_1,f_1,r_1)\in E$ and $(T_2,f_2,r_2)\in B$, then $(T_1,f_1,r_1)\circ(T_2,f_2,r_2)\in D$.

    \end{enumerate}

On the other hand, we show  $D\subseteq(D\circ C)\cup (E\circ B).$ Let $(T,f,r)\in D$ and $(T,f,r)=(T_1,f_1,r_1)\circ (T_2,f_2,r_2)$. Then  $f_1(r_1)=f(r)=0$. By the definition of $D$ and $f_2=f|_{T_2}$, $f_2\in IR2DF(T_2)$. Using the fact that $f(N_T[r_1])=1$, we deduce that $f(r_2)<2$. Consider the following cases.

{\bfseries Case 1.} $f(r_2)=1.$ It is clear that $(T_2,f_2,r_2)\in B$ because $f_2$ is an IR2DF of $T_2$. Since $f_1=f|_{T_1}$ and $f_1(N_{T_1}[r_1])=0$,
 we obtain $f_1|_{T_1-r_1}\in IR2DF(T_1-r_1)$.  %$f^*_1$ is an IR2DF of $T_1'$ by defining $f^*_1(r_1')=2$. 
Hence,  we have  $f_1\in IR2DF_{r_1}(T_1)$, implying that $(T_1,f_1,r_1)\in E$.

{\bfseries Case 2.} $f(r_2)=0.$ Then $f_2$ is an IR2DF of $T_2$, implying that $(T_2,f_2,r_2)\in C$. Using the fact that $f(N_T[r_1])=1$ and $f(r_2)=0$,
we know that   $f_1(N_{T_1}[r_1])=1$. It is clear that $f_1\in IR2DF_{r_1}(T_1)$.   %$f^*_1$ is an IR2DF of $T_1'$ by defining $f^*_1(r_1')=1$. 
It implies that $(T_1,f_1,r_1)\in D$.

Consequently, we deduce that $D\subseteq(D\circ C)\cup (E\circ B).$
\end{proof}

\begin{lemma}\label{lem5}
$E=E\circ C$.
\end{lemma}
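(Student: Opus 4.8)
\textbf{Proof proposal for Lemma~\ref{lem5} ($E = E \circ C$).}

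The plan is to mirror the structure of the previous lemmas: first establish the inclusion $E \circ C \subseteq E$, then the reverse inclusion $E \subseteq E \circ C$. As before, write $(T,r) = (T_1,r_1) \circ (T_2,r_2)$ with $r = r_1$, and given functions $f_1$ on $T_1$ and $f_2$ on $T_2$, let $f$ be the function on $T$ agreeing with $f_i$ on $T_i$.

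For the forward direction, suppose $(T_1,f_1,r_1) \in E$ and $(T_2,f_2,r_2) \in C$. Then $f_1(N_{T_1}[r_1]) = 0$ (in particular $f_1(r_1) = 0$), $f_1|_{T_1 - r_1} \in IR2DF(T_1 - r_1)$, and $f_1 \notin IR2DF(T_1)$; also $f_2 \in IR2DF(T_2)$ with $f_2(r_2) = 0$. I would check: (a) $f(N_T[r]) = 0$, since $N_T[r] = N_{T_1}[r_1] \cup \{r_2\}$ and $f_2(r_2)=0$; (b) $f \notin IR2DF(T)$, because $r$ still has all neighbours assigned $0$ and $f(r)=0$ (the new neighbour $r_2$ contributes nothing); (c) $f|_{T-r} \in IR2DF(T-r)$: the component $T_1 - r_1$ part is an IR2DF by hypothesis, $T_2$ is untouched and is an IR2DF, independence of the positive-valued vertices is preserved since no edge between $T_1$ and $T_2$ other than $r_1 r_2$ is added and $r_1 \notin T - r$, and every vertex of $T_2$ keeps the same neighbourhood in $T - r$ as in $T_2$ (the edge $r_1 r_2$ is not present in $T - r$). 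Hence $f \in IR2DF_r(T)$ with $f(N[r]) = 0$, i.e. $(T,f,r) \in E$.

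For the reverse direction, let $(T,f,r) \in E$ and decompose $(T,f,r) = (T_1,f_1,r_1) \circ (T_2,f_2,r_2)$. Then $f(r) = 0$ and $f(N_T[r]) = 0$, so in particular $f_2(r_2) = f(r_2) = 0$ and $f_1(N_{T_1}[r_1]) = 0$. Since $f|_{T-r} \in IR2DF(T-r)$ and $T_2$ is a subtree of $T - r$ with all neighbourhoods intact (again $r_1 r_2 \notin E(T-r)$), $f_2 = f|_{T_2} \in IR2DF(T_2)$; combined with $f_2(r_2)=0$ this gives $(T_2,f_2,r_2) \in C$. For the $T_1$ side, $f_1(N_{T_1}[r_1]) = 0$ forces $f_1 \notin IR2DF(T_1)$ (vertex $r_1$ fails the condition), and $f_1|_{T_1 - r_1} = f|_{T_1 - r_1}$ is an IR2DF of $T_1 - r_1$ since $T_1 - r_1$ is an induced subforest of $T - r$ with unchanged neighbourhoods; hence $f_1 \in IR2DF_{r_1}(T_1)$ with $f_1(N_{T_1}[r_1]) = 0$, i.e. $(T_1,f_1,r_1) \in E$. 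Therefore $(T,f,r) \in E \circ C$.

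The only subtlety — and the step I would state carefully rather than wave through — is the bookkeeping that attaching $T_2$ at $r_2$ does not rescue the domination deficiency at $r$: because $f_2(r_2) = 0$ is forced (the new edge $r_1 r_2$ cannot help, as $f(r_1) = 0$ means $r_2$ would itself need domination, and $f(N_T[r]) = 0$ in class $E$ rules out $f_2(r_2) > 0$ anyway in the reverse direction, while in the forward direction it is part of the hypothesis that $(T_2,f_2,r_2)\in C$ gives $f_2(r_2)=0$), the status ``$f \notin IR2DF(T)$ but $f|_{T-r} \in IR2DF(T-r)$ and $f(N[r])=0$'' transfers cleanly in both directions. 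Everything else is a routine restriction/extension argument of exactly the same flavour as Lemmas~\ref{lem2}--\ref{lem5} above, so I would present it concisely.
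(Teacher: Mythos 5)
Your proof is correct and takes essentially the same route as the paper's: both inclusions are verified directly from the definitions of $C$, $E$ and $IR2DF_r$, using that $f(N[r])=0$ forces $f_2(r_2)=0$ and that $T-r$ decomposes into $T_1-r_1$ and $T_2$ with unchanged neighbourhoods. You merely spell out the routine checks (the failure of domination at $r$, restriction to components) that the paper dismisses as clear.
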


\begin{proof}
Let $(T,r)=(T_1,r_1)\circ (T_2,r_2)$ and $r=r_1$. %We first show that $[e]\circ [c]\subseteq [e]$.
Suppose that  $f_1$ (resp. $f_2$) is a function on $T_1$ (resp. $T_2$). Define $f$ as the function on $T$ with $f|_{T{}_1}=f_1$
and $f|_{T{}_2}=f_2$.
If $(T_1,f_1,r_1)\in E$ and $(T_2,f_2,r_2)\in C$, then it is clear that $(T,f,r)\in E$. Hence, $(E\circ C)\subseteq E$.

On the other hand,  let $(T,f,r)\in E$ and $(T,f,r)=(T_1,f_1,r_1)\circ (T_2,f_2,r_2)$. Then  $f_1(r_1)=f(r)=0$. By the definition of $E$, we deduce that $f(r_2)=0$. Using the fact that  $(T,f,r)\in E$, we have that $f_2\in IR2DF(T_2).$  So $(T_2,f_2,r_2)\in C$.
Notice that  $(T,f,r)\in E$, we have $f_1(N_{T_1}[r_1])=0$, implying that $(T_1,f_1,r_1)\notin D$. We can easily check that $f_1\in IR2DF_{r_1}(T_1)$.
Hence, we have $(T_1,f_1,r_1)\in E$, implying that $E\subseteq(E\circ C).$
\end{proof}

Let $T=(V,E)$ be a tree with $n$ vertices. It is well known that the vertices of $T$ have an ordering $v_1,v_2,\cdots,v_n$ such that for each $1\le i\le n-1$, $v_i$ is adjacent to exactly one vertex $v_j$ with $j>i$ (see \cite{west2001introduction}). The ordering is called a tree ordering, where the only neighbor $v_j$ with $j>i$ is called the father of $v_i$ and $v_i$ is a child of $v_j$. For each $1\le i\le n-1$, the father of $v_i$ is denoted by $F(v_i)=v_j$.

The final step is to define the initial vector. In this case, for a tree, the only basis graph is a single vertex. It is easy to obtain that the initial vector is
$(2,1,\infty,\infty,0),$ where $'\infty'$ means undefined. Now, we are ready to present the algorithm.

%\vspace{4mm}
\begin{algorithm*}[H]
    \KwIn{A tree $T=(V,E)$ with a tree ordering $v_1,v_2,\cdots,v_n$.}
    \KwOut{The independent Roman $\{2\}$-dominating  number $i_{\{R2\}}(T)$.}
    \uIf{$T=K_1$}{
        \Return $i_{\{R2\}}(T)=1$\;
    }
    \For{$i:=1$ \KwTo $n$}
    {
            initialize $l [i,1..5]$ to $[2,1,\infty,\infty,0]$ \;
    }
    \For{$j:=1$ \KwTo $n-1$}
    {
        $v_k=F(v_j)$;\\

        $l[k,1]=\min\{l[k,1]+ l[j,3], l[k,1]+ l[j,4], l[k,1]+ l[j,5]\}$\;
        $l[k,2]=\min\{l[k,2]+ l[j,3], l[k,2]+ l[j,4]\}$\;
        $l[k,3]=\min\{l[k,3]+ l[j,1], l[k,3]+ l[j,2], l[k,3]+ l[j,3], l[k,4]+ l[j,1], l[k,4]+ l[j,2], l[k,5]+ l[j,1]\}$\;
        $l[k,4]=\min\{l[k,4]+ l[j,3], l[k,5]+ l[j,2]\}$\;
        $l[k,5]=\min\{l[k,5]+ l[j,3]\}$\;

    }
    \Return $i_{\{R2\}}(T)=\min\{l[n,1], l[n,2], l[n,3]\}$\;
    \caption{INDEPENDENT-ROMAN $\{2\}$-DOM-IN-TREE}
\end{algorithm*}

From the above argument, we can obtain the following theorem.

\begin{theorem}\label{thm3}
    Algorithm INDEPENDENT-ROMAN $\{2\}$-DOM-IN-TREE can output the independent Roman $\{2\}$-domination  number of any tree $T=(V,E)$ in linear time $O(n)$, where $n=|V|$. %Such algorithm also gives the value of independent $2$-rainbow domination number $i_{r2}(T)$.
\end{theorem}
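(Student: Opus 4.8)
The plan is to argue correctness and running time separately, with correctness resting entirely on Lemmas~\ref{lem2}--\ref{lem5} and the initial vector, and the time bound following from a direct count of the work done per edge.

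First I would establish the invariant that the algorithm maintains. For each vertex $v_i$, let $T^{(i)}$ denote the subtree processed so far and rooted at $v_i$ (that is, $v_i$ together with all children $v_j$ of $v_i$ already handled by the outer loop, and their processed subtrees, recursively). I claim that after the iteration of the $j$-loop in which the last child of $v_i$ is merged into $v_i$, the array entries $l[i,1],\dots,l[i,5]$ equal the minimum weight of a function $f$ on $T^{(i)}$ such that $(T^{(i)},f,v_i)$ lies in class $A,B,C,D,E$ respectively (with $\infty$ if no such function exists). For a leaf (or a vertex before any child is merged), $T^{(i)}=K_1$, and the only functions are $f(v_i)\in\{0,1,2\}$: assigning $2$ gives class $A$ with weight $2$, assigning $1$ gives class $B$ with weight $1$, assigning $0$ puts the tuple in class $E$ with weight $0$ (since $f(N[v_i])=0$ and the empty restriction to $T^{(i)}-v_i$ is vacuously an IR2DF), while classes $C$ and $D$ are empty; this matches the initial vector $(2,1,\infty,\infty,0)$. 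For the inductive step, when child $v_j$ with its fully computed vector is merged into $v_k$, the new subtree is exactly the composition $(T^{(k)},v_k)\circ(T^{(j)},v_j)$, so Lemmas~\ref{lem2}--\ref{lem5} tell us precisely which pairs of classes compose into each target class, and because the weight is additive over the composition ($f=f_1\circ f_2$ with disjoint supports), the minimum weight in each target class is the minimum over the listed pairs of the sum of the two sub-minima. One checks line by line that the five update equations in the $j$-loop are verbatim transcriptions of Lemmas~\ref{lem2} ($A$), \ref{lem3} ($B$), \ref{lem4} ($C$), \ref{lem5} first version ($D$), \ref{lem5} second version ($E$) — for instance, $l[k,3]$ is updated as the min over $(C\circ A),(C\circ B),(C\circ C),(D\circ A),(D\circ B),(E\circ A)$, exactly the six terms of Lemma~\ref{lem4}. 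One subtlety to spell out: the updates for different target classes on the right-hand side all reference the \emph{old} values $l[k,\cdot]$ before this merge, so the five assignments must be understood as simultaneous (or computed into temporaries); I would note this explicitly so the recurrence is unambiguous.

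Next, having the invariant, correctness of the output is immediate: after the outer loop finishes, $v_n$ is the global root, $T^{(n)}=T$, and an IR2DF of $T$ is exactly a function $f$ with $(T,f,v_n)$ in $A\cup B\cup C$ (classes $D$ and $E$ consist of functions that fail to be IR2DF because $v_n$ has no father to repair it). Hence $i_{\{R2\}}(T)=\min\{l[n,1],l[n,2],l[n,3]\}$, which is what the algorithm returns; the $T=K_1$ case is handled separately and returns $1$ correctly.

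For the running time: the first $\texttt{for}$ loop does $O(1)$ work per vertex, hence $O(n)$. The second loop runs $n-1$ times, once per edge $v_jv_k$ of the tree ordering, and each iteration performs a fixed number of additions and comparisons — the five update lines together involve a constant number ($6+2+6+2+1=17$) of candidate values — so each iteration is $O(1)$ and the loop is $O(n)$. Locating $F(v_j)$ is $O(1)$ given the tree ordering. Therefore the total time is $O(n)$, and the space is $O(n)$ for the array $l$. I do not anticipate a genuine obstacle here; the one place demanding care is the verification that the algorithm's five update formulas correspond line-for-line to the six lemmas (including getting the left/right roles of the two subtrees right, since composition is not symmetric and $r=r_1$), and making the simultaneous-update semantics of the $l[k,\cdot]$ assignments precise — this bookkeeping is the crux, but it is routine once the invariant is stated cleanly.
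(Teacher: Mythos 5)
Your proposal is correct and follows essentially the same route as the paper, which deduces the theorem directly from Lemmas~\ref{lem2}--\ref{lem5}, the initial vector $(2,1,\infty,\infty,0)$, and the Wimer-style composition framework; you simply make explicit the subtree invariant, the line-by-line correspondence between the update rules and the lemmas, and the $O(n)$ count that the paper leaves implicit. Your remark about simultaneous updates is harmless but not strictly needed, since in the written order each assignment $l[k,m]$ only reads entries $l[k,m']$ with $m'\ge m$ that have not yet been overwritten in that iteration.
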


\section{Roman $\{2\}$-domination in block graphs}

Let $G$ be a connected block graph and $G$ isn't a complete graph. The \emph{block-cutpoint graph} of $G$ is a bipartite graph $T_G$ in which one partite set consists of the cut-vertices of $G$, and the other has a vertex $h_i$ for each block $H_i$ of $G$. We include $vh_i$ as an edge of $T_G$ if and only if $v\in H_i$, where $h_i$ is called \emph{block-vertex}. Since two blocks in a graph share at most one vertex, then $T_G$ is a tree. We can easily get $T_G$ in linear time (see \cite{west2001introduction}). 

Let $H$ be a block and $I$ be the set of cut-vertices of $H$. We say $H$ is a block of \emph{type 0} if $|H|=|I|$ and $H$ is a block of \emph{type 1} if $|H|=|I|+1$. If $|H|\ge |I|+2$, we say $H$ is a block of \emph{type 2}. Next, we give the definition of \emph{induced Roman \{2\}-domination function}.
\begin{definition}
 A function $f_*$ is  called an induced Roman \{2\}-domination function $(R2DF_*)$ of $T_G$, if there is a R2DF $f$ of $G$, such that

\[f_*(v)=\begin{cases}
        f(v), \text{ if } v \text{ is a cut-vertex of G}\\
        f(H)-f(I), \text{ if } v(=h) \text{ is  a block-vertex of  } T_G \\
        \end{cases}\]
\end{definition}
Therefore, we can transform Roman \{2\}-domination problem on $G$ to induced Roman \{2\}-domination problem on $T_G$. Then, we show how to verify whether a given funtion of $T_G$ is a $R2DF_*$ or not.
\begin{lemma}
There exists a R2DF $f$ of weight $\gamma_{R2}(G)$, which satisfies the following conditions.

1. If $H$ is a block of type 1, $v$ isn't a cut-vertex and $v\in H$, then $f(v)\in \{0,1\}$.

2. If $H$ is a block of type 2, $v$ isn't a cut-vertex and $v\in H$, then $f(v)=0$.
\end{lemma}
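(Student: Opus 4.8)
The plan is to start from an arbitrary R2DF of minimum weight and repair the two kinds of violations by local reassignments that keep the weight equal to $\gamma_{R2}(G)$ while strictly decreasing the total value placed on non-cut-vertices. Since that total is a nonnegative integer, iterating such repairs must terminate at a function with no violations; equivalently, I would simply pick, among all R2DFs $f$ of $G$ with $f(V)=\gamma_{R2}(G)$, one for which $\Phi(f):=\sum\{f(v):v\text{ a non-cut-vertex of }G\}$ is minimum (possible since $V\to\{0,1,2\}$ has finitely many functions) and show that this $f$ already satisfies conditions 1 and 2. I would record at the outset that, because $G$ is connected and not complete, it has at least two blocks, so every block contains at least one cut-vertex; in particular every block $H$ of type $1$ or $2$ has a cut-vertex, and for every non-cut-vertex $v$ of such an $H$ we have $N(v)\subseteq H$.

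I would handle the violations of condition 2 with $f(v)=2$ and the single type of violation of condition 1 together: let $v$ be a non-cut-vertex of a block $H$ of type $1$ or $2$ with $f(v)=2$, and let $u$ be a cut-vertex of $H$. If $f(u)\ge 1$, then reassigning $f(v)\leftarrow 0$ and $f(u)\leftarrow 2$ produces a function of weight $\gamma_{R2}(G)-f(u)<\gamma_{R2}(G)$ which is still a R2DF — indeed $v$ and every $0$-valued vertex of $H$ are dominated through $u$ (one clique contains them all), and no vertex outside $H$ is harmed since the only modified vertex with neighbours outside $H$ is $u$, whose value did not decrease — contradicting minimality of the weight. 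So $f(u)=0$ for every cut-vertex $u$ of $H$; then reassigning $f(v)\leftarrow 0$, $f(u)\leftarrow 2$ for one such $u$ keeps the weight at $\gamma_{R2}(G)$ and lowers $\Phi$ by $2$, contradicting the choice of $f$.

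It remains to exclude a non-cut-vertex $v$ of a type-$2$ block $H$ with $f(v)=1$, and here I expect the real work to lie. First I would use minimality of the weight to pin down $f$ on $H$: no vertex of $H$ can have value $2$ (else $f(v)\leftarrow 0$ stays a R2DF, and it is cheaper), at most one vertex of $H\setminus\{v\}$ can have value $1$ (else $f(v)\leftarrow 0$ is again a cheaper R2DF, $v$ being covered by two $1$-neighbours), and — invoking a second non-cut-vertex $w$ of $H$, which exists since $H$ is of type $2$ — at least one vertex of $H\setminus\{v\}$ has value $1$, for otherwise $f(N(w))=f(H\setminus\{w\})=f(v)=1$ and $w$ would be undominated. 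Thus $f$ is $1$ on $v$, $1$ on exactly one other vertex $w'$, and $0$ elsewhere on $H$. If $w'$ is a cut-vertex, setting $f(v)\leftarrow 0$ and $f(w')\leftarrow 2$ keeps the weight at $\gamma_{R2}(G)$ and lowers $\Phi$ by $1$; if $w'$ is a non-cut-vertex, pick a cut-vertex $u$ of $H$ (so $f(u)=0$ by the description above) and set $f(v)\leftarrow 0$, $f(w')\leftarrow 0$, $f(u)\leftarrow 2$, which again keeps the weight and lowers $\Phi$ by $2$. In both cases the validity of the new R2DF is a routine check (everyone formerly relying on $v$ or $w'$ is now dominated by the vertex given value $2$, and nothing outside $H$ is affected), and in both cases we contradict the choice of $f$. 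Hence $f$ has no violations of either kind, which is the assertion.
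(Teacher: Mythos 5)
Your proposal is correct and takes essentially the same route as the paper: both repair a minimum-weight R2DF by shifting the weight carried by non-cut-vertices of a block onto a cut-vertex of that block, relying on the fact that every neighbour of a non-cut-vertex lies inside its (complete) block. Your secondary minimization of $\Phi$ (the total weight on non-cut-vertices) is a welcome tightening of the paper's terser ``reassign, a contradiction'' argument, which never states the extremal choice needed to turn its weight-preserving swaps into genuine contradictions.
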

\begin{proof}
Let $f$ be a R2DF of weight $\gamma_{R2}(G)$ and $u$ be a cut-vertex of $H$, where $H$ isn't a block of type 0 and $f(u)=max_{v_0\in I} f(v_0)$. If $f(v)=2$, we can reassign $0$ to $v$ and $2$ to $u$. Hence, $f(v)\in \{0,1\}$. Futhermore, if $H$ is a block of type 2, we suppose that there exists a vertex $v\in H$ such that $f(v)=1$. If $f(u)\ge 1$, then we can reassign $2$ to $u$ and $0$ to $v$, a contradiction. Suppose that $f(u)=0$, then there exists a vertex $w\in H$, such that $w$ isn't a cut-vertex and $f(w)\ge 1$. We reassign $2$ to $u$ and $0$ to $v,w,$ a contradiction.
\end{proof}

\begin{theorem}
The function $f_*$ is a $R2DF_*$ of $T_G$ with its corresponding function $f$ satisfying $Lemma$ $8$ if and only if $f_*$ satisfies the following conditions.

1. If $H$ is a block of type 1, then $f_*(h)=0$ or $1$.

2. If $H$ is a block of type 0 or 2, then $f_*(h)=0$.

3. If $v$ is a cut-vertex with $f_*(v)=0$, then $\exists u\in N^2_{T_G}(v)$ such that $f_*(u)=2$ or $\exists u_1,u_2\in N^2_{T_G}(v)$ such that $f_*(u_1)=f_*(u_2)=1$.

4. If $H$ is a block of type 1 or 2 with $f_*(h)=0$, then $\exists u\in N_{T_G}(h)$ such that $f_*(u)=2$ or $\exists u_1,u_2\in N_{T_G}(h)$ such that $f_*(u_1)=f_*(u_2)=1$.

5. The weight of $f_*$ is $\gamma_{R2}(G)$.

%\noindent Moreover, if $v$ is a cut-vertex, then $f(v)=f_*(v)$; if $v$ isn't a cut-vertex with $v\in H$, then $f(v)=f_*(h)$.
\end{theorem}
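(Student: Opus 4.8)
The plan is to prove both directions of the equivalence by maintaining an explicit dictionary between a R2DF $f$ of $G$ satisfying Lemma~8 and its induced function $f_*$ on $T_G$, and then translating the local Roman $\{2\}$-domination requirement at each vertex of $G$ into a local statement on $T_G$. Throughout I will use the elementary facts, valid for a block $H$ with cut-vertex set $I$ and block-vertex $h$, that $N_{T_G}(h)=I$, that $f_*(h)$ is the total $f$-weight on the non-cut-vertices of $H$ (so $f(H)=f(I)+f_*(h)$), and that the blocks through a cut-vertex $v$ pairwise meet only in $v$, so that $N_G(v)$ is the \emph{disjoint} union of the cliques $H_i\setminus\{v\}$ over the blocks $H_i\ni v$. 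From these I obtain the two identities I will lean on. First, for a cut-vertex $v$ with $f_*(v)=0$,
\[
f(N_G(v))=\sum_{u\in N^2_{T_G}(v)}f_*(u),
\]
by splitting each $H_i\setminus\{v\}$ into its cut-vertices (which, without repetition, are exactly the vertices at distance two from $v$ in $T_G$) and its non-cut-vertices (whose total $f$-weight is $f_*(h_i)$). Second, for a non-cut-vertex $w$ of a block $H$,
\[
f(N_G(w))=f(I)+f_*(h)-f(w),
\]
since there $N_G(w)=H\setminus\{w\}$. I will also use the global identity $\sum_{u\in V(T_G)}f_*(u)=\sum_{v\in V(G)}f(v)$, which holds because each non-cut-vertex of $G$ is counted once (inside $f_*(h)$ of its block-vertex), while a cut-vertex $v$ is counted once as itself, $d_{T_G}(v)$ times through the terms $f(H_i)$, and $-d_{T_G}(v)$ times through the terms $-f(I_i)$.

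For the forward direction, suppose $f_*$ is induced by a R2DF $f$ of weight $\gamma_{R2}(G)$ satisfying Lemma~8. Conditions~1 and~2 are immediate from Lemma~8 together with the observation that a type-$0$ block has no non-cut-vertex; in particular every block-vertex value of $f_*$ is at most $1$ and every cut-vertex value is at most $2$. For Condition~3, if $v$ is a cut-vertex with $f_*(v)=0$ then $f(N_G(v))\ge 2$ because $f$ is a R2DF, so by the first identity the $f_*$-values over $N^2_{T_G}(v)$ sum to at least $2$; being non-negative integers bounded by $2$, one of them equals $2$ or at least two equal $1$, which is Condition~3. For Condition~4, let $H$ be of type~$1$ or~$2$ with $f_*(h)=0$; Lemma~8 supplies a non-cut-vertex $w\in H$ with $f(w)=0$ (the unique one if $H$ is type~$1$, any one if $H$ is type~$2$), and the second identity gives $f(I)=f(N_G(w))\ge 2$, so the same integer argument applied to $f_*|_I=f|_I$ on $N_{T_G}(h)=I$ yields Condition~4. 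Condition~5 is the global identity together with $f(V(G))=\gamma_{R2}(G)$.

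For the converse, assume $f_*$ satisfies Conditions~1--5 (we take $f_*:V(T_G)\to\{0,1,2\}$; by Conditions~1--2 its block-vertex values lie in $\{0,1\}$). Define $f$ on $G$ by $f(v)=f_*(v)$ for every cut-vertex $v$, by $f(w)=f_*(h)$ for the unique non-cut-vertex $w$ of a type-$1$ block with block-vertex $h$, and by $f(w)=0$ for every non-cut-vertex of a type-$0$ or type-$2$ block. By construction $f$ satisfies Lemma~8; comparing with the definition of $R2DF_*$ shows that $f_*$ is exactly the function induced by $f$; and $\sum f=\sum f_*=\gamma_{R2}(G)$ by the global identity and Condition~5. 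It remains to check that $f$ is a R2DF of $G$. If $x$ is a cut-vertex with $f(x)=0$, then $f_*(x)=0$, so Condition~3 combined with the first identity gives $f(N_G(x))\ge 2$. If $x$ is a non-cut-vertex with $f(x)=0$, then $x$ lies in a type-$1$ block with $f_*(h)=0$ or in a type-$2$ block (a type-$0$ block has no non-cut-vertex), so Condition~4 applies to that block; since $N_{T_G}(h)=I\subseteq N_G(x)$ and $f|_I=f_*|_I$, the vertex of value $2$ or the two vertices of value $1$ promised by Condition~4 already lie in $N_G(x)$, whence $f(N_G(x))\ge 2$. Thus $f$ is a minimum-weight R2DF of $G$ satisfying Lemma~8 whose induced function is $f_*$, i.e.\ $f_*$ is a $R2DF_*$ of the required type.

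The step needing the most care is the bookkeeping behind the first identity: one must check that distinct blocks through $v$ contribute pairwise-disjoint families of cut-vertices, that those families are precisely the distance-two vertices of $v$ in the bipartite tree $T_G$, and that the non-cut-weight of each such block equals its $f_*$-value. It is also here that Conditions~1 and~2 (equivalently, Lemma~8) are essential, since they force every block-vertex value of $f_*$ to be at most $1$; without this, having total $f$-weight at least $2$ on a neighbourhood would not be equivalent to the requirement that the neighbourhood contain a vertex of value $2$ or two vertices of value $1$, which is how Conditions~3 and~4 are phrased. The remaining points --- the global counting identity and the routine verification that the constructed $f$ induces $f_*$ --- are straightforward.
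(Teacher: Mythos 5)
Your proof is correct and follows essentially the same route as the paper: both directions rest on the same dictionary between $f$ and $f_*$, and your converse uses the identical construction of $f$ from $f_*$ followed by the same local verification of the R2DF condition at cut-vertices and non-cut-vertices. The only difference is cosmetic: you package the local checks into the counting identities $f(N_G(v))=\sum_{u\in N^2_{T_G}(v)}f_*(u)$ and $f(N_G(w))=f(I)+f_*(h)-f(w)$, whereas the paper argues by cases on whether the witnessing neighbours are cut-vertices.
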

\begin{proof}
Let $f_*$ be a $R2DF_*$ of $T_G$ with its corresponding function $f$ satisfying $Lemma$ $8$. We first show $f_*$ satisfies the above conditions. It is clear that the above item 1 and item 2 are true. Suppose that $v$ is a cut-vertex of $G$ with $f_*(v)=0$, then $f(v)=0.$ If there exists a neighbor $u\in N(v)$ with $f(u)=2$, then $u$ is a cut-vertex and $u\in N_{T_G}^2[v]$. It means that $\exists u\in N_{T_G}^2[v]$ such that $f_*(u)=2$. Otherwise, there exists at least two neighbors $x,y\in N(v)$ having $f(x)=f(y)=1$. If $x$ and $y$ are cut-vertices, then we obtain $x,y\in N_{T_G}^2[v]$ having $f_*(x)=f_*(y)=1$. If at least one of $x$ and $y$ isn't a cut-vertex, without loss of generality we can assume $x$ isn't a cut-vertex and $H$ is a block containing $x$. We deduce that $H$ is a block of type 1, implying that $f_*(h)=1$. So item 3 holds. Suppose that $H$ is a block of type 1 or 2 with $f_*(h)=0$ and $I$ is the set of cut-vertices of $H$. By $Lemma$ 8, we deduce that $f(v)=0$ for each $v\in H-I$. Since $f$ is a $R2DF$, then $\exists u\in N(v)$ such that $f(u)=2$ or $\exists u_1,u_2\in N(v)$ such that $f(u_1)=f(u_2)=1$. It is clear that $u$ is a cut-vertex. So do $u_1$, $u_2$. It means that $f_*(u)=2$ and $f_*(u_1)=f_*(u_2)=1$. We obtain item 4. It is easy to check that $f_*$ satisfies item 5.

On the other hand, let $f_*$ be a function satisfying the above conditions. Define $f$ as follows.
 \[f(v)=\begin{cases}
        f_*(v), \text{ if } v \text{ is a cut-vertex}\\
        f_*(h), \text{ if } v \text{ isn't a cut-vertex with } v\in H\\
        \end{cases}\]
Then, we show $f$ is a R2DF satisfying $Lemma$ 8. Suppose that $H$ is a block and $v$ isn't a cut-vertex with $v\in H$. If $H$ is a block of type 1, by the above item 1, we have $f(v)=f_*(h)\in \{0,1\}.$ If $H$ is a block of type 2, by the above item 2, we obtain $f(v)=f_*(h)=0$. It is clear that $f(V_G)=f_*(V_{T_G})=\gamma_{R2}(G)$. 

Suppose that $v$ is a cut-vertex with $f(v)=f_*(v)=0$. If $\exists u\in N_{T_G}^2[v]$ such that $f_*(u)=2$, by items 1-3, we deduce that $u$ is a cut-vertex and $u\in N_G(v)$. Otherwise, $\exists h_1,h_2\in N_{T_G}^2[v]$ such that $f_*(h_1)=f_*(h_2)=1$. If $h_1$ and $h_2$ are both cut-vertices, then we have $h_1,h_2\in N_G(v)$ and $f(h_1)=f(h_2)=1$. If at least one of $h_1$ and $h_2$ isn't a cut-vertex, without loss of generality, we can assume $h_1$ isn't a cut-vertex and $h_1$ represent block $H_1$ in $T_G$. We deduce that $H_1$ is a block of type 1. Hence, $\exists v_1\in H_1$ and $v_1$ isn't a cut-vertex such that $f(v_1)=f_*(h_1)=1$. Therefore, we obtain $f(N(v))\ge 2$.

Suppose that $H$ is a block containing $v$ and $v$ isn't a cut-vertex with $f(v)=f_*(h)=0$. Hence, we deduce $H$ is a block of type 1 or 2. Since item 4, we have that $\exists u\in N_{T_G}(h)$ such that $f_*(u)=2$ or $\exists u_1,u_2\in N_{T_G}(h)$ such that $f_*(u_1)=f_*(u_2)=1$. It is clear that $u$ is a cut-vertex and $u\in N_G(v)$. So do $u_1,u_2$. We also obtain $f(u)=f_*(u)=2$ and $f(u_1)=f(u_2)=1$. Therefore, we deduce $f(N(v))\ge 2$.
\end{proof}
By $Theorem$ $9$, we can easily verify whether a given function of $T_G$ is a $R2DF_*$. Then, we continue to use the method of tree composition and decomposition in Section 3. For convenience, $T_G$ is denoted by $T$ if there is no ambiguity.
 Suppose that $T$ is a tree rooted at $r$ and $f:V(T)\rightarrow \{0,1,2\}$ is a function on $T$.  $T'$ is defined as a new tree rooted at $r'$  and $f':V(T')\rightarrow \{0,1,2\}$ is  a function  on $T'$,  where $V(T')=V(T)\cup \{r'\}$ and $E(T')=E(T)\cup \{rr'\}$,  $f'|_T=f$.

  In order to construct an algorithm for computing  Roman $\{2\}$-domination number, we must characterize the possible tree-subset tuples $(T,f,r)$. For this purpose,
we introduce some additional notations as follows:

\noindent%$f\backslash v$=$f|_{T-\{v\}}$;\\
$CVX(T)=\{ r~|~ r$ is a cut-vertex of $G\}$;\\
$BVX(T)=\{ r~|~ r$ is a block-vertex of $T\}$;\\
$R2DF_*(T)=\{ f~|~ f$ is a $R2DF_*$ of $T\}$;\\
$F_1(T)=\{ f~|~ f\in R2DF_*(T)$ with $f(r)=1\}$;\\
$F_2(T)=\{ f~|~ f\in R2DF_*(T)$ with $f(r)=2\}$;\\
$R2DF_*(T^{+1})=\{ f~|~ f\notin R2DF_*(T) $, $f'\in F_1(T')$ and $f'|_T=f\}$;\\
$R2DF_*(T^{+2})=\{ f~|~ f\notin R2DF_*(T) $, $f'\in F_2(T')$ and $f'|_T=f\}-R2DF_*(T^{+1})$.%$iR2DF(T^{-v})=\{ f~|~ f\notin iR2DF(T) $ and $f\backslash v\in iR2DF(T-\{v\})\}$.

Then we consider the following eleven classes:

\noindent$A_1=\{(T,f,r)~|~ f\in R2DF_*(T),$ $r\in CVX(T)$ and $f(r)=2\}$;\\
$A_2=\{(T,f,r)~|~ f\in R2DF_*(T),$ $r\in CVX(T)$ and $f(r)=1\}$;\\
$A_3=\{(T,f,r)~|~ f\in R2DF_*(T),$ $r\in CVX(T)$ and $f(r)=0\}$;\\
$A_4=\{(T,f,r)~|~ f\in R2DF_*(T^{+1}),$ $r\in CVX(T)\}$;\\
$A_5=\{(T,f,r)~|~ f\in R2DF_*(T^{+2}),$ $r\in CVX(T)\}$;\\
$B_1=\{(T,f,r)~|~ f\in R2DF_*(T),$ $r\in BVX(T)$ and $f(N[r])\ge2\}$;\\
$B_2=\{(T,f,r)~|~ f\in R2DF_*(T),$ $r\in BVX(T)$ and $f(N[r])=1\}$;\\
$B_3=\{(T,f,r)~|~ f\in R2DF_*(T),$ $r\in BVX(T)$ and $f(N[r])=0\}$;\\
$B_4=\{(T,f,r)~|~ f\in R2DF_*(T^{+1}),$ $r\in BVX(T)$ and $f(N[r])=1\}$;\\%$B_4=\{(T,f,r)~|~ f\in R2DF_*(T^{+1}),$ $r\in BVX(T)\}$;$B_4=\{(T,f,r)~|~ f\in iR2DF(T^{-r}),$ $r\in BVX(T)$ and $f(N[r])=1\}$;\\
$B_5=\{(T,f,r)~|~ f\in R2DF_*(T^{+1}),$ $r\in BVX(T)$ and $f(N[r])=0\}$;\\
$B_6=\{(T,f,r)~|~ f\in R2DF_*(T^{+2}),$ $r\in BVX(T)\}$.%$B_5=\{(T,f,r)~|~ f\in iR2DF(T^{-r}),$ $r\in BVX(T)$ and $f(N[r])=0\}$.
%f$ isn't an IR2DF of $T$ with $f(r)=0$. If $f^*(r')=2$, then $f^*$ is an IR2DF of $T'\}-[d]$.

In order to give the algorithm, we present the following Lemmas.
\begin{lemma}\label{lem10}
     $A_1=(A_1\circ B_1)\cup (A_1\circ B_2)\cup (A_1\circ B_3)\cup (A_1\circ B_4)\cup (A_1\circ B_5)\cup (A_1\circ B_6)$.%$A_1=(A_1\circ B_1)\cup (A_1\circ B_2)\cup (A_1\circ B_3)\cup (A_1\circ B_4)\cup (A_1\circ B_5)$.
\end{lemma}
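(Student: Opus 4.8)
The plan is to follow exactly the same two-inclusion template used in Lemmas \ref{lem2}--\ref{lem5}, specialized to the root class $A_1$ (the root $r$ is a cut-vertex carrying value $2$). Throughout, write $(T,r) = (T_1,r_1)\circ(T_2,r_2)$ with $r=r_1$, so that $T$ is obtained by attaching the subtree $T_2$ at $r_2$ as a new child of $r_1$ via the edge $r_1r_2$; since $T_G$ is bipartite with cut-vertices and block-vertices alternating, $r_1\in CVX$ forces $r_2\in BVX$, which is why only the $B$-classes appear on the right-hand side.

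For the easy inclusion $(A_1\circ B_1)\cup\cdots\cup(A_1\circ B_6)\subseteq A_1$, I would argue once, uniformly: suppose $(T_1,f_1,r_1)\in A_1$ and $(T_2,f_2,r_2)\in B_j$ for some $j\in\{1,\dots,6\}$, and let $f=f_1\circ f_2$. Then $f(r_1)=2$ and $r_1\in CVX(T)$, so to place $(T,f,r)\in A_1$ it only remains to check $f\in R2DF_*(T)$. The point is that adding the edge $r_1r_2$ can only help: every cut-vertex of $T$ distinct from $r_1,r_2$ keeps the same closed/second neighborhood restricted to its own side, and $f_1,f_2$ were already $R2DF_*$ on their pieces (for $j=1,2,3$) or become so once the missing demand at $r_2$ is met. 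Concretely, for $j=1$ both restrictions are already $R2DF_*$ and nothing new is required. For $j=2,3$ the block-vertex $r_2$ (or its private non-cut vertices) had deficient weight in its closed neighborhood inside $T_2$, but $f(N_T[r_2])$ now includes $f(r_1)=2$, so condition 4 of Theorem 9 is satisfied at $r_2$ in $T$; one checks no other vertex's condition is disturbed. For $j=4,5,6$, the failure of $f_2$ to be $R2DF_*$ on $T_2$ was, by the definitions of $R2DF_*(T_2^{+1})$ and $R2DF_*(T_2^{+2})$, exactly the lack of a neighbor of value $1$ (resp. value $2$) outside $T_2$ at $r_2$; the new neighbor $r_1$ with $f(r_1)=2$ supplies at least that, so $f$ is a $R2DF_*$ of $T$. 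Hence the union is contained in $A_1$.

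For the reverse inclusion $A_1\subseteq\bigcup_{j=1}^{6}(A_1\circ B_j)$, take $(T,f,r)\in A_1$ and decompose $(T,f,r)=(T_1,f_1,r_1)\circ(T_2,f_2,r_2)$ with $f_i=f|_{T_i}$. Since $f(r_1)=f(r)=2$ and restricting a $R2DF_*$ to $T_1$ (whose root retains value $2$) leaves it a $R2DF_*$ of $T_1$ — every demand inside $T_1$ is witnessed inside $T_1$ because the only new edge is $r_1r_2$ and $r_1$ does not need witnessing — we get $(T_1,f_1,r_1)\in A_1$. Now classify $f_2$ on $T_2$: if $f_2\in R2DF_*(T_2)$, then according to the value of $f_2(N_{T_2}[r_2])\in\{0,1,\ge 2\}$ we land in $B_3$, $B_2$, or $B_1$ respectively. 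If $f_2\notin R2DF_*(T_2)$, then $f_2|_{T_2-r_2}$ is fine but $r_2$ (or a private non-cut vertex of its block) fails its Theorem 9 condition in $T_2$ precisely because it relied on the witness $r_1$; by definition this means $f_2\in R2DF_*(T_2^{+2})$, or $f_2\in R2DF_*(T_2^{+1})$ if a single extra unit already suffices, and in the latter case we further split on whether $f_2(N_{T_2}[r_2])$ equals $1$ or $0$ to get $B_4$ or $B_5$, while the former gives $B_6$. In every case $(T_2,f_2,r_2)$ lies in one of $B_1,\dots,B_6$, and since $(T_1,f_1,r_1)\in A_1$ we conclude $(T,f,r)\in A_1\circ B_j$ for the appropriate $j$.

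\emph{Main obstacle.} The routine bookkeeping is in showing that restricting $f$ to $T_1$ or $T_2$ does not break any \emph{other} vertex's condition in Theorem 9 — in particular that a non-cut vertex of a type-1 block does not silently lose its witness, and that the second-neighborhood condition 3 for cut-vertices interacts correctly across the new edge $r_1r_2$. The delicate point, and the one I would write most carefully, is the correspondence between "$f_2$ fails to be $R2DF_*$ on $T_2$" and membership in $R2DF_*(T_2^{+1})$ versus $R2DF_*(T_2^{+2})$: one must verify that exactly one of these holds and that the residual demand at $r_2$ is met by a neighbor carrying value $2$ (which $r_1$ does), so that no case is missed and no spurious case is included; this mirrors the role of classes $D$ and $E$ in Lemmas \ref{lem4}--\ref{lem5}, but with the extra type-0/1/2 block structure carried along.
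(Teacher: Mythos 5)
Your proposal is correct and takes essentially the same route as the paper: both directions proceed exactly as in the paper's proof, which simply restricts $f$ to $T_1$ to get $(T_1,f_1,r_1)\in A_1$ and classifies $(T_2,f_2,r_2)$ into $B_1,B_2,B_3$ or $B_4,B_5,B_6$ according to whether $f_2\in R2DF_*(T_2)$, while the easy inclusion is handled uniformly since $f(r_1)=2$ supplies any demand at $r_2$. One minor slip that does not affect the conclusion: for $j=2,3$ the classes $B_2,B_3$ already have $f_2\in R2DF_*(T_2)$, so nothing is ``deficient'' there; the composition lies in $A_1$ simply because adding the edge $r_1r_2$ cannot destroy any condition of Theorem 9.
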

\begin{proof}
Let $(T,r)=(T_1,r_1)\circ (T_2,r_2)$ and $r=r_1$. We first show that $(A_1\circ B_1)\cup (A_1\circ B_2)\cup (A_1\circ B_3)\cup (A_1\circ B_4)\cup (A_1\circ B_5)\cup (A_1\circ B_6)\subseteq A_1$.
Suppose that  $f_1$ (resp. $f_2$) is a function on $T_1$ (resp. $T_2$). Define $f$ as the function on $T$ with $f|_{T{}_1}=f_1$
and $f|_{T{}_2}=f_2$. For each $1\le i\le 6$, if $(T_1,f_1,r_1)\in A_1$ and $(T_2,f_2,r_2)\in B_i$, it is clear that $f$ is a $R2DF_*$ of $T$, $r\in CVX(T)$ and $f(r)=f(r_1)=2$. We deduce that $(T_1,f_1,r_1)\circ(T_2,f_2,r_2)\in A_1$. It means that $(A_1\circ B_1)\cup (A_1\circ B_2)\cup (A_1\circ B_3)\cup (A_1\circ B_4)\cup (A_1\circ B_5)\cup (A_1\circ B_6)\subseteq A_1$.

 Now we prove  that $A_1\subseteq(A_1\circ B_1)\cup (A_1\circ B_2)\cup (A_1\circ B_3)\cup (A_1\circ B_4)\cup (A_1\circ B_5)\cup (A_1\circ B_6)$. Let $(T,f,r)\in A_1$ and $(T,f,r)=(T_1,f_1,r_1)\circ (T_2,f_2,r_2)$, then $f_1(r_1)=f(r)=2$. Since $f\in R2DF_*(T)$ and $f_1=f|_{T_1}$, $f_1\in R2DF_*(T_1)$ and $r_1\in CVX(T_1)$. So  $(T_1,f_1,r_1)\in A_1$ and $r_2\in BVX(T_2)$. 
If $f_2\in R2DF_*(T_2)$, then we obtain  $(T_2,f_2,r_2)\in B_1,$ $B_2$ or $B_3$. If $f_2\notin R2DF_*(T_2)$, then $(T_2,f_2,r_2)\in B_4,$ $B_5$ or $B_6$. Hence, we conclude that  $A_1\subseteq(A_1\circ B_1)\cup (A_1\circ B_2)\cup (A_1\circ B_3)\cup (A_1\circ B_4)\cup (A_1\circ B_5)\cup (A_1\circ B_6)$. 
\end{proof}

\begin{lemma}\label{lem11}
    $A_2=(A_2\circ B_1)\cup (A_2\circ B_2)\cup(A_2\circ B_3)\cup(A_2\circ B_4)\cup(A_2\circ B_5)$.%$A_2=(A_2\circ B_1)\cup (A_2\circ B_2)\cup(A_2\circ B_3)\cup(A_2\circ B_4)$.
\end{lemma}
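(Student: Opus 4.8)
\textbf{Proof proposal for Lemma~\ref{lem11}.}

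The plan is to mirror exactly the structure of the proof of Lemma~\ref{lem10}, adapting the root value from $2$ to $1$. Write $(T,r)=(T_1,r_1)\circ(T_2,r_2)$ with $r=r_1$, and recall that in a composition the new edge $r_1r_2$ is added, so $r_2\in BVX(T_2)$ whenever $r_2$ is a block-vertex of $T_2$ (the block/cut-vertex alternation in $T_G$ forces $r_1\in CVX$ to compose only with $r_2\in BVX$). I would split the argument into the two containments, just as in the previous lemmas.

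For the containment $(A_2\circ B_1)\cup(A_2\circ B_2)\cup(A_2\circ B_3)\cup(A_2\circ B_4)\cup(A_2\circ B_5)\subseteq A_2$: suppose $(T_1,f_1,r_1)\in A_2$ and $(T_2,f_2,r_2)\in B_i$ for some $1\le i\le 5$, and let $f=f_1\circ f_2$. In every case $f(r)=f_1(r_1)=1$ and $r=r_1\in CVX(T)$, so it only remains to check $f\in R2DF_*(T)$. This is where the value $1$ matters: because $f(r_1)=1$, the root $r_1$ itself does not need to be dominated, so no constraint is imposed on $f(N_T[r_1])$ from the $T_1$-side; and since the composition adds the single edge $r_1r_2$ with $f(r_1)=1>0$, the vertex $r_2$ now has a neighbor of positive weight. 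Going through Theorem~9's conditions: conditions 1,2 are local to blocks and unchanged; condition 3 for cut-vertices other than $r_1$ is inherited from $f_1,f_2$; condition 4 for block-vertices other than $r_2$ is inherited; and for $r_2$ itself, if $f_2(r_2)=0$ then membership of $(T_2,f_2,r_2)$ in $B_4$ or $B_5$ (or in $B_1$ with $f_2(N[r_2])\ge 2$) guarantees condition 4 holds in $T$ once the edge to $r_1$ is added — this is precisely why $B_6$ is \emph{excluded}, since $f_2\in R2DF_*(T_2^{+2})$ would require a neighbor of weight $2$ at $r_2$, which the weight-$1$ root $r_1$ cannot supply. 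Hence $(T_1,f_1,r_1)\circ(T_2,f_2,r_2)\in A_2$.

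For the reverse containment $A_2\subseteq(A_2\circ B_1)\cup\cdots\cup(A_2\circ B_5)$: let $(T,f,r)\in A_2$ and decompose $(T,f,r)=(T_1,f_1,r_1)\circ(T_2,f_2,r_2)$, so $f_1(r_1)=f(r)=1$. Since $f\in R2DF_*(T)$ and restriction to a rooted subtree whose root keeps a positive value still yields a $R2DF_*$ (the analogue of the Observation in Section~3, together with $f(r_1)\ne 0$), we get $f_1\in R2DF_*(T_1)$ with $f_1(r_1)=1$ and $r_1\in CVX(T_1)$, i.e.\ $(T_1,f_1,r_1)\in A_2$; also $r_2\in BVX(T_2)$. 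Now distinguish whether $f_2\in R2DF_*(T_2)$: if yes, then according to the value of $f_2(N_{T_2}[r_2])\in\{\ge 2,\,1,\,0\}$ we get $(T_2,f_2,r_2)\in B_1$, $B_2$, or $B_3$ respectively. If $f_2\notin R2DF_*(T_2)$, the only obstruction can be at the block-vertex $r_2$ (everything below it is already fine since $f_2|_{T_2-r_2}$ restricted appropriately is a $R2DF_*$), and in $T$ this obstruction is cured only by the new neighbor $r_1$ of weight $1$; consequently $f_2$ must become a $R2DF_*$ after attaching one weight-$1$ pendant, i.e.\ $f_2\in R2DF_*(T_2^{+1})$, and then $(T_2,f_2,r_2)\in B_4$ if $f_2(N_{T_2}[r_2])=1$ or $B_5$ if $f_2(N_{T_2}[r_2])=0$. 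The case $f_2\in R2DF_*(T_2^{+2})\setminus R2DF_*(T_2^{+1})$ is impossible here, because a single weight-$1$ neighbor already repairs $f_2$, so $f_2\in R2DF_*(T_2^{+1})$ and $B_6$ does not arise. Collecting the cases gives the claimed inclusion.

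The only genuinely delicate point — and the one I would write out carefully rather than wave through — is the exclusion of $B_6$ in both directions: one must argue both that composing $A_2$ with $B_6$ can fail to produce a $R2DF_*$ (so $B_6$ cannot be added to the right-hand side) and that no element of $A_2$ forces its $T_2$-part into $B_6$ (so $B_6$ need not be added). Both rest on the same observation, namely that a root of weight $1$ contributes exactly $1$ to $f(N[r_2])$, which suffices to satisfy condition~4 of Theorem~9 when $r_2$ needs total weight $\ge 2$ from two weight-$1$ neighbors but is \emph{not} enough when $r_2$'s defect demanded a single weight-$2$ neighbor; the bookkeeping that $R2DF_*(T_2^{+2})$ was defined with the set-difference removing $R2DF_*(T_2^{+1})$ is what makes the partition clean. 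Everything else is a routine transcription of Lemma~\ref{lem10}'s argument with $2$ replaced by $1$.
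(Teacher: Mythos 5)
Your proposal is correct and follows essentially the same route as the paper's own proof: the same two containments, the same case split on whether $f_2\in R2DF_*(T_2)$ (giving $B_1,B_2,B_3$ according to $f_2(N_{T_2}[r_2])$) or not (where the weight-$1$ root forces $f_2\in R2DF_*(T_2^{+1})$, hence $B_4$ or $B_5$ and never $B_6$), with your write-up merely more explicit than the paper's ``it is clear'' in the forward direction. The only small imprecision is the parenthetical claim that the obstruction must sit at $r_2$ itself (it could equally be condition~3 at a cut-vertex of $T_2$ adjacent to $r_2$, whose $N^2$ gains $r_1$), but your actual inference --- that the only new help available to $T_2$ is the weight-$1$ vertex $r_1$, so $f_2\in R2DF_*(T_2^{+1})$ --- does not depend on where the deficiency lies.
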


\begin{proof}
Let $(T,r)=(T_1,r_1)\circ (T_2,r_2)$ and $r=r_1$. We first show that $(A_2\circ B_1)\cup (A_2\circ B_2)\cup(A_2\circ B_3)\cup(A_2\circ B_4)\cup(A_2\circ B_5)\subseteq A_2$.
Suppose that  $f_1$ (resp. $f_2$) is a function on $T_1$ (resp. $T_2$). Define $f$ as the function on $T$ with $f|_{T{}_1}=f_1$
and $f|_{T{}_2}=f_2$. For each $1\le i\le 5$, if $(T_1,f_1,r_1)\in A_2$ and $(T_2,f_2,r_2)\in B_i$, it is clear that $f$ is a $R2DF_*$ of $T$, $r\in CVX(T)$ and $f(r)=f(r_1)=1$. We conclude that $(T_1,f_1,r_1)\circ(T_2,f_2,r_2)\in A_2$, implying that $(A_2\circ B_1)\cup (A_2\circ B_2)\cup(A_2\circ B_3)\cup(A_2\circ B_4)\cup(A_2\circ B_5)\subseteq A_2$.

Then we need to show  $A_2\subseteq(A_2\circ B_1)\cup (A_2\circ B_2)\cup(A_2\circ B_3)\cup(A_2\circ B_4)\cup(A_2\circ B_5)$. Let $(T,f,r)\in A_2$ and $(T,f,r)=(T_1,f_1,r_1)\circ (T_2,f_2,r_2)$, then $f_1(r_1)=f(r)=1$. It is clear that $f_1$ is a $R2DF_*$ of $T_1$ and $r_1\in CVX(T_1)$. So we conclude that $(T_1,f_1,r_1)\in A_2$ and $r_2\in BVX(T_2)$. If $f_2$ is a $R2DF_*$ of $T_2$, then we obtain  $(T_2,f_2,r_2)\in B_1,B_2$ or $B_3$. If $f_2$ is not a $R2DF_*$ of $T_2$, then $f_2(N_{T_2}[r_2])\le 1$ and $f_2\in R2DF_*(T_2^{+1})$ by using the fact that $(T,f,r)\in A_2$. Therefore, we have $(T_2,f_2,r_2)\in B_4$ or $B_5$. Hence, we deduce that  $A_2\subseteq(A_2\circ B_1)\cup (A_2\circ B_2)\cup(A_2\circ B_3)\cup(A_2\circ B_4)\cup(A_2\circ B_5)$.
\end{proof}

\begin{lemma}\label{lem12}
 $A_3=(A_3\circ B_1)\cup (A_3\circ B_2)\cup (A_3\circ B_3)\cup (A_4\circ B_1)\cup (A_4\circ B_2)\cup (A_5\circ B_1)$.
\end{lemma}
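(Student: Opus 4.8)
The plan is to follow the two-inclusion template used for Lemmas~\ref{lem10} and \ref{lem11}. Throughout write $(T,r)=(T_1,r_1)\circ(T_2,r_2)$ with $r=r_1$ and $f_i=f|_{T_i}$; since the composition edge $r_1r_2$ must join the two sides of the bipartition of the block-cutpoint tree, any tuple built from the $A$- and $B$-families has $r_1\in CVX(T_1)$ and $r_2\in BVX(T_2)$, and in all six products on the right, as well as in $A_3$, the root carries $f(r_1)=0$: for $A_4$ and $A_5$ this is because attaching a positive leaf above $r_1$ affects the conditions of Theorem~9 only at $r_1$ itself, forcing $r_1$ to be the vertex $f_1$ fails to dominate, which as a cut-vertex then has value $0$. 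The key structural remark, used in both directions, is that gluing $T_2$ below $r_1$ changes no $N_{T_G}$-neighborhood of a block-vertex and no $N^2_{T_G}$-neighborhood of a cut-vertex of $T_1-r_1$, and changes the corresponding neighborhoods on the $T_2$ side only by the addition of the zero-valued vertex $r_1$; hence $f\in R2DF_*(T)$ holds as soon as $f_1$ obeys the conditions of Theorem~9 on $T_1-r_1$, $f_2\in R2DF_*(T_2)$, and $r_1$ is dominated in $T$, and in particular $f_2\in R2DF_*(T_2)$ is automatic whenever $(T,f,r)\in A_3$.

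For the inclusion $(A_3\circ B_1)\cup(A_3\circ B_2)\cup(A_3\circ B_3)\cup(A_4\circ B_1)\cup(A_4\circ B_2)\cup(A_5\circ B_1)\subseteq A_3$ I would run through the six products. In each case $r=r_1\in CVX(T)$, $f(r)=0$, and the second factor lies in $B_1\cup B_2\cup B_3$, giving $f_2\in R2DF_*(T_2)$, which by the remark persists in $T$; so it remains only to see that $r_1$ is dominated in $T$. If the first factor is in $A_3$ this is already true inside $T_1$. If it is in $A_4$ (resp.\ $A_5$), then $f_1\in R2DF_*(T_1^{+1})$ (resp.\ $R2DF_*(T_1^{+2})$) forces $N^2_{T_1}(r_1)$ to contain exactly one vertex of value $1$ and none of value $2$ (resp.\ no positive vertex at all); a $B_1$-factor places in $\{r_2\}\cup N_{T_2}(r_2)\subseteq N^2_T(r_1)$ either a cut-vertex of value $2$ or two vertices of value $1$, and a $B_2$-factor places there one vertex of value $1$, which in each of the listed combinations is exactly what $r_1$ still needs.

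For the reverse inclusion, take $(T,f,r)\in A_3$, so $f_1(r_1)=0$ and, by the remark, $f_2\in R2DF_*(T_2)$; thus $(T_2,f_2,r_2)\in B_1$, $B_2$ or $B_3$ according as $f_2(N_{T_2}[r_2])$ is at least $2$, equal to $1$, or equal to $0$. If $f_1\in R2DF_*(T_1)$ then $(T_1,f_1,r_1)\in A_3$ and we land in one of $A_3\circ B_1$, $A_3\circ B_2$, $A_3\circ B_3$. If $f_1\notin R2DF_*(T_1)$, the remark shows $r_1$ is the only vertex of $T_1$ left undominated by $f_1$, and its domination in $T$ is supplied through $\{r_2\}\cup N_{T_2}(r_2)$; inspecting $N^2_{T_1}(r_1)$ gives $f_1\in R2DF_*(T_1^{+1})$ if that set already carried a vertex of value $1$ and $f_1\in R2DF_*(T_1^{+2})$ if it carried nothing, so $(T_1,f_1,r_1)\in A_4$ or $A_5$. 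In the $A_5$ case $r_1$ requires a value-$2$ vertex or two value-$1$ vertices from the $T_2$ side, whence $f_2(N_{T_2}[r_2])\ge 2$ and the tuple lies in $A_5\circ B_1$; in the $A_4$ case one additional unit suffices, so $f_2(N_{T_2}[r_2])\ge 1$ and the tuple lies in $A_4\circ B_1$ or $A_4\circ B_2$. Together with the forward inclusion this establishes the identity.

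I expect the main obstacle to be the second-neighborhood clause of Theorem~9, which is precisely what makes this lemma differ from Lemmas~\ref{lem10} and \ref{lem11}: the domination of the cut-vertex $r_1$ may be witnessed by a cut-vertex inside the block that $r_2$ represents rather than by $r_2$ itself, so one must keep track of the full weight $f_2(N_{T_2}[r_2])$ and verify that the resulting case split -- $A_3$ with each of $B_1,B_2,B_3$; $A_4$ with $B_1$ or $B_2$; $A_5$ with $B_1$ alone -- is exhaustive. One must also confirm that no product with a deficient block-vertex second factor ($B_4$, $B_5$ or $B_6$) is needed, which holds because such an $r_2$ would need positive weight from its parent $r_1$, and $f(r_1)=0$. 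Everything else is the routine definition-chasing already displayed in the preceding lemmas.
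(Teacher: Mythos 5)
Your proof is correct and follows essentially the same route as the paper: both directions are handled by the same case analysis, with $A_4$/$A_5$ membership characterized by $f_1$ failing only at $r_1$ with $f_1(N^2_{T_1}[r_1])=1$ or $0$, and the second factor pinned down via $f_2(N_{T_2}[r_2])\ge 1$ or $\ge 2$. The only difference is that you make explicit the neighborhood-decomposition facts (e.g.\ $N^2_T[r_1]=N^2_{T_1}[r_1]\cup N_{T_2}[r_2]$ and the automatic $f_2\in R2DF_*(T_2)$) that the paper asserts without comment.
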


\begin{proof}
Let $(T,r)=(T_1,r_1)\circ (T_2,r_2)$ and $r=r_1$. We first show that $(A_3\circ B_1)\cup (A_3\circ B_2)\cup (A_3\circ B_3)\cup (A_4\circ B_1)\cup (A_4\circ B_2)\cup (A_5\circ B_1)\subseteq A_3$.
Suppose that  $f_1$ (resp. $f_2$) is a function on $T_1$ (resp. $T_2$). Define $f$ as the function on $T$ with $f|_{T{}_1}=f_1$
and $f|_{T{}_2}=f_2$. We make some remarks.

    \begin{enumerate}[(i)]
        \item For each $1\le i\le3$, if $(T_1,f_1,r_1)\in A_3$ and $(T_2,f_2,r_2)\in B_i$, then $(T_1,f_1,r_1)\circ(T_2,f_2,r_2)\in A_3$. Indeed, if $(T_1,f_1,r_1)\in A_3$ and $(T_2,f_2,r_2)\in B_i$, then $f_1$ is a $R2DF_*$ of $T_1$ and $f_2$ is a $R2DF_*$ of $T_2$. Hence, $f$ is a $R2DF_*$ of $T$, $r\in CVX(T)$ and $f(r)=0$. Then We obtain $(T_1,f_1,r_1)\circ(T_2,f_2,r_2)\in A_3$.
         \item For each $1\le i\le2$, if $(T_1,f_1,r_1)\in A_4$ and $(T_2,f_2,r_2)\in B_i$, then $(T_1,f_1,r_1)\circ(T_2,f_2,r_2)\in A_3$. Indeed, if $(T_1,f_1,r_1)\in A_4$, then we have that $f_1\in R2DF_*(T_1^{+1})$, $r\in CVX(T)$, $f(r)=0$ and $f(N^2_{T_1}[r])=1$. By the definition of $B_i$, we obtain $f(N^2_{T}[r])\ge 2$ and $f\in iR2DF(T)$. It means that $(T_1,f_1,r_1)\circ(T_2,f_2,r_2)\in A_3$.
         \item If $(T_1,f_1,r_1)\in A_5$ and $(T_2,f_2,r_2)\in B_1$, then $(T_1,f_1,r_1)\circ(T_2,f_2,r_2)\in A_3$. Indeed, if $(T_1,f_1,r_1)\in A_5$, then we have that $f_1\in R2DF_*(T_1^{+2})$, $r\in CVX(T)$, $f(r)=0$ and $f(N^2_{T_1}[r])=0$. By the definition of $B_1$, we obtain $f(N^2_{T}[r])\ge 2$ and $f\in R2DF_*(T)$. It means that $(T_1,f_1,r_1)\circ(T_2,f_2,r_2)\in A_3$. 
    \end{enumerate}

Therefore, we need to prove $A_3\subseteq(A_3\circ B_1)\cup (A_3\circ B_2)\cup (A_3\circ B_3)\cup (A_4\circ B_1)\cup (A_4\circ B_2)\cup (A_5\circ B_1).$ Let $(T,f,r)\in A_3$ and $(T,f,r)=(T_1,f_1,r_1)\circ (T_2,f_2,r_2)$, then we have that $f_1(r_1)=f(r)=0$, $r_1\in CVX(T_1)$ and $f_2\in R2DF_*(T_2)$. So $r_2\in BVX(T_2)$. If $f_1\in R2DF_*(T_1)$,  then we obtain $(T_1,f_1,r_1)\in A_3$, implying that $(T_2,f_2,r_2)\in B_1,B_2$ or $B_3$.  Suppose that $f_1\notin R2DF_*(T_1)$. Consider the following cases.

{\bfseries Case 1.} $f_1(N^2_{T_1}[r_1])=1.$ Then we obtain $f_1\in R2DF_*(T_1^{+1})$, implying that $(T_1,f_1,r_1)\in A_4$. Since $(T,f,r)\in A_3$, we have $f_2(N_{T_2}[r_2])\ge1$. So $(T_2,f_2,r_2)\in B_1$ or $B_2$. 

{\bfseries Case 2.} $f_1(N^2_{T_1}[r_1])=0.$ So we have $f_1\in R2DF_*(T_1^{+2})$. Then $(T_1,f_1,r_1)\in A_5$. Since $(T,f,r)\in A_3$, we obtain $f_2(N_{T_2}[r_2])\ge2$. Hence, $(T_2,f_2,r_2)\in B_1$.

So $A_3\subseteq(A_3\circ B_1)\cup (A_3\circ B_2)\cup (A_3\circ B_3)\cup (A_4\circ B_1)\cup (A_4\circ B_2)\cup (A_5\circ B_1).$
\end{proof}

\begin{lemma}\label{lem13}
   $A_4=(A_4\circ B_3)\cup (A_5\circ B_2)$.
\end{lemma}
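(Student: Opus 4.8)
The statement to prove is Lemma~\ref{lem13}: $A_4=(A_4\circ B_3)\cup (A_5\circ B_2)$. This follows the exact template of Lemmas~\ref{lem10}--\ref{lem12}, so the plan is to prove the two inclusions separately by unwinding the definitions of the classes $A_4$, $A_5$, $B_2$, $B_3$, together with the composition operation $(T,r)=(T_1,r_1)\circ(T_2,r_2)$ with $r=r_1$.

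For the inclusion $(A_4\circ B_3)\cup (A_5\circ B_2)\subseteq A_4$: I would fix $(T_1,f_1,r_1)$ and $(T_2,f_2,r_2)$ with $f=f_1\circ f_2$ and check each of the two cases. In the case $(T_1,f_1,r_1)\in A_4$, $(T_2,f_2,r_2)\in B_3$, note $f_1\in R2DF_*(T_1^{+1})$ with $f_1(N^2_{T_1}[r_1])=1$ and $r_2\in BVX(T_2)$ with $f_2(N_{T_2}[r_2])=0$; since attaching $T_2$ adds only the block-vertex $r_2$ (carrying weight $0$ and still unsatisfied) as a neighbor of $r_1$, one checks $f\notin R2DF_*(T)$, $f(N^2_T[r_1])=1$, and $f'\in F_1(T')$ (the extra cut-vertex $r'$ with value $1$ repairs $r_1$ and every vertex that depended on it), so $f\in R2DF_*(T^{+1})$ and $(T,f,r)\in A_4$. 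In the case $(T_1,f_1,r_1)\in A_5$, $(T_2,f_2,r_2)\in B_2$, we have $f_1\in R2DF_*(T_1^{+2})$ with $f_1(N^2_{T_1}[r_1])=0$ and $f_2(N_{T_2}[r_2])=1$; the single unit of weight contributed through $r_2$ brings $f(N^2_T[r_1])$ up to $1$ but not $2$, so again $f\notin R2DF_*(T)$ but $f'\in F_1(T')$, giving $(T,f,r)\in A_4$. The mildly delicate point in both cases is confirming that no vertex other than $r_1$ (and the vertices that depend on $r_1$) loses its R2DF$_*$ condition under the composition, and that no new violation is introduced at $r_2$ — this uses that $r_2$'s closed/second neighborhood in $T$ only gains $r_1$, whose value is $0$ in class $A_4,A_5$.

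For the reverse inclusion $A_4\subseteq(A_4\circ B_3)\cup (A_5\circ B_2)$: take $(T,f,r)\in A_4$ with $(T,f,r)=(T_1,f_1,r_1)\circ(T_2,f_2,r_2)$, so $f_1(r_1)=f(r)=0$, $r_1\in CVX(T_1)$, $f\in R2DF_*(T^{+1})$, and hence $f(N^2_T[r_1])\le 1$. Since $f_2=f|_{T_2}$ and $f$ restricted appropriately must already witness the R2DF$_*$ conditions inside $T_2$ away from $r_2$, I would first argue $f_2\in R2DF_*(T_2)$ (the only vertex of $T_2$ whose neighborhood changed is $r_2$, which gained $r_1$ of value $0$, so if $f$ is "almost" a R2DF$_*$ of $T$ then $f_2$ is genuinely a R2DF$_*$ of $T_2$ — one should double check the block-vertex condition for $r_2$ here, noting $f(r_1)=0$ does not help $r_2$ anyway), so $r_2\in BVX(T_2)$ and $(T_2,f_2,r_2)\in B_1\cup B_2\cup B_3$. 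Then split on the value $f(N^2_T[r_1])$: if it equals $1$, then since the only contribution to $N^2_{T_1}[r_1]$ possibly lost is through $r_2$, I get $f_1(N^2_{T_1}[r_1])\in\{0,1\}$, and a short case analysis (mirroring Lemma~\ref{lem12}) pins down which of $A_4,A_5$ contains $(T_1,f_1,r_1)$ and correspondingly which of $B_2,B_3$ contains $(T_2,f_2,r_2)$: if $f_1(N^2_{T_1}[r_1])=1$ then $f_1\in R2DF_*(T_1^{+1})$ so $(T_1,f_1,r_1)\in A_4$, and the total being $1$ forces $f_2(N_{T_2}[r_2])=0$, i.e. $(T_2,f_2,r_2)\in B_3$; if $f_1(N^2_{T_1}[r_1])=0$ then $f_1\in R2DF_*(T_1^{+2})$ so $(T_1,f_1,r_1)\in A_5$, and $f(N^2_T[r_1])=1$ forces $f_2(N_{T_2}[r_2])=1$, i.e. $(T_2,f_2,r_2)\in B_2$. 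One must also rule out $f(N^2_T[r_1])=0$: that would make $f\in R2DF_*(T^{+2})$ (after checking $f\notin R2DF_*(T^{+1})$, which holds since a value-$1$ vertex at $r'$ would not satisfy $r_1$), contradicting $f\in R2DF_*(T^{+1})$ — so this case cannot occur, which is exactly why $B_1$ does not appear on the right-hand side.

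The main obstacle, as in the earlier lemmas, is bookkeeping rather than depth: carefully tracking, under the composition, (a) which vertices change neighborhoods (only $r_1$ and $r_2$, each gaining the other), (b) that the R2DF$_*$ block-vertex conditions (Theorem~9, items 3 and 4) for $r_1$, $r_2$ and their block-neighbors are handled correctly given $f(r_1)=0$, and (c) correctly translating "$f\notin R2DF_*(T)$ but becomes a R2DF$_*$ after adding a pendant cut-vertex of value $1$ (but not of value $2$ only)" — i.e. the $^{+1}$ versus $^{+2}$ distinction — into the precise value of $f(N^2_T[r_1])$. Once this dictionary is set up, both inclusions are immediate case checks, and I would present them compactly as two displayed cases exactly in the style of Lemma~\ref{lem12}'s proof.
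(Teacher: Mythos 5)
Your proposal is correct and follows essentially the same route as the paper: both directions are handled by the identity $f(N^2_T[r_1])=f_1(N^2_{T_1}[r_1])+f_2(N_{T_2}[r_2])$, the observation that membership in $A_4$ (resp. $A_5$) for a cut-vertex root amounts to the failure being only at $r_1$ with $f(N^2[r_1])=1$ (resp. $0$), and the same two-case split on $f_2(N_{T_2}[r_2])\in\{0,1\}$, which is exactly the paper's Case 1/Case 2. The only nitpick is your closing remark: $B_1$ is excluded because $f(N^2_T[r_1])=1$ forces $f_2(N_{T_2}[r_2])\le 1$, not because the subcase $f(N^2_T[r_1])=0$ is ruled out (that exclusion is what separates $A_4$ from $A_5$), but this does not affect the validity of the argument.
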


\begin{proof}
Let $(T,r)=(T_1,r_1)\circ (T_2,r_2)$ and $r=r_1$. We first show that $(A_4\circ B_3)\cup (A_5\circ B_2)\subseteq A_4$.
Suppose that  $f_1$ (resp. $f_2$) is a function on $T_1$ (resp. $T_2$). Define $f$ as the function on $T$ with $f|_{T{}_1}=f_1$
and $f|_{T{}_2}=f_2$. It is easy to check the following remarks by definitions.

    \begin{enumerate}[(i)]
        \item  If $(T_1,f_1,r_1)\in A_4$ and $(T_2,f_2,r_2)\in B_3$, then $(T_1,f_1,r_1)\circ(T_2,f_2,r_2)\in A_4$.
         \item If $(T_1,f_1,r_1)\in A_5$ and $(T_2,f_2,r_2)\in B_2$, then $(T_1,f_1,r_1)\circ(T_2,f_2,r_2)\in A_4$.

    \end{enumerate}
Therefore, we obtain that $(A_4\circ B_3)\cup (A_5\circ B_2)\subseteq A_4$.

On the other hand, we show  $A_4\subseteq(A_4\circ B_3)\cup (A_5\circ B_2)$. Let $(T,f,r)\in A_4$ and $(T,f,r)=(T_1,f_1,r_1)\circ (T_2,f_2,r_2)$. Then  we have that $f\in R2DF_*(T^{+1})$ and $r_1\in CVX(T_1)$, implying that $f(N_T^2[r_1])=1$. It means that $r_2\in BVX(T_2)$. By the definition of $A_4$ and $f_2=f|_{T_2}$, $f_2\in R2DF_*(T_2)$. Using the fact that $f(N_T^2[r_1])=1$, we deduce that $f_2(N[r_2])<2$. Consider the following cases.

{\bfseries Case 1.} $f_2(N[r_2])=1.$ It is clear that $(T_2,f_2,r_2)\in B_2$. Since $f_1(N^2_{T_1}[r_1])=f(N^2_T[r_1])-f_2(N[r_2])=0,$
 we obtain $(T_1,f_1,r_1)\in A_5$.

{\bfseries Case 2.} $f_2(N[r_2])=0.$ Then $(T_2,f_2,r_2)\in B_3$. Using the fact that $f_1(N^2_{T_1}[r_1])=f(N^2_T[r_1])-f_2(N[r_2])=1,$
we know $(T_1,f_1,r_1)\in A_4$.

Consequently, we deduce that $A_4\subseteq(A_4\circ B_3)\cup (A_5\circ B_2)$.
\end{proof}

\begin{lemma}\label{lem14}
$A_5=A_5\circ B_3$.
\end{lemma}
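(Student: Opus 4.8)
\textbf{Proof proposal for Lemma \ref{lem14} ($A_5 = A_5 \circ B_3$).}

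The plan is to follow exactly the template established in Lemmas \ref{lem10}--\ref{lem13}: prove the two inclusions $A_5 \circ B_3 \subseteq A_5$ and $A_5 \subseteq A_5 \circ B_3$ separately, working with the decomposition $(T,r) = (T_1,r_1)\circ(T_2,r_2)$ with $r = r_1$. The intuition is that a tuple in $A_5$ has root $r_1 \in CVX(T)$ with $f \in R2DF_*(T^{+2})$, meaning $f$ is not yet a $R2DF_*$ of $T$ but becomes one once we attach a new block-vertex carrying weight $2$; equivalently $f(N^2_T[r_1]) = 0$. Attaching a subtree rooted at a block-vertex $r_2$ can only preserve this status if that subtree contributes nothing to $N^2[r_1]$, i.e.\ if $f_2(N[r_2]) = 0$, which is precisely the condition defining $B_3$.

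For the inclusion $A_5 \circ B_3 \subseteq A_5$: suppose $(T_1,f_1,r_1) \in A_5$ and $(T_2,f_2,r_2) \in B_3$, and let $f = f_1 \circ f_2$. Then $r = r_1 \in CVX(T_1) = CVX(T)$, and since $f_1 \in R2DF_*(T_1^{+2})$ we have $f_1(N^2_{T_1}[r_1]) = 0$, while $(T_2,f_2,r_2)\in B_3$ gives $f_2 \in R2DF_*(T_2)$ and $f_2(N_{T_2}[r_2]) = 0$; in particular $f_2(r_2) = 0$. Adding the edge $r_1 r_2$ contributes $f(r_2) = 0$ to the closed-neighborhood sum at $r_1$, so $f(N^2_T[r_1]) = f_1(N^2_{T_1}[r_1]) + f_2(N_{T_2}[r_2]) = 0$. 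All other vertices of $T$ retain their local verification conditions: block-vertices and cut-vertices away from $r_1, r_2$ are unaffected, $r_2$ was already satisfied inside $T_2$ (it lies in $B_3 \subseteq R2DF_*(T_2)$ — wait, one must be slightly careful that $r_2$'s condition in $T$ is no harder than in $T_2$, which holds since $N_T[r_2] \supseteq N_{T_2}[r_2]$ and $f(r_1) = 0$ adds nothing but also removes nothing). Hence $f \notin R2DF_*(T)$ exactly as before (the deficiency is still at $r_1$), and attaching a block-vertex of weight $2$ repairs it, so $f \in R2DF_*(T^{+2})$; therefore $(T,f,r) \in A_5$.

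For the reverse inclusion $A_5 \subseteq A_5 \circ B_3$: let $(T,f,r) \in A_5$ with $(T,f,r) = (T_1,f_1,r_1)\circ(T_2,f_2,r_2)$. Then $f \in R2DF_*(T^{+2})$, $r_1 \in CVX(T_1)$, and $f(N^2_T[r_1]) = 0$; since $r_1$ is a cut-vertex its neighbor $r_2$ is a block-vertex, so $r_2 \in BVX(T_2)$, and from $f(N^2_T[r_1]) = 0$ we get both $f_1(N^2_{T_1}[r_1]) = 0$ and $f_2(N_{T_2}[r_2]) = 0$ (the latter because $N_{T_2}[r_2] \subseteq N^2_T[r_1]$). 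Restricting, $f_2 = f|_{T_2}$ must be a genuine $R2DF_*$ of $T_2$: every block-vertex and cut-vertex of $T_2$ other than $r_2$ has the same neighborhood in $T_2$ as in $T$ and so is satisfied, and $r_2$ itself has $f_2(N_{T_2}[r_2]) = 0$ but $r_2$ is a block-vertex, and by Theorem 9(item 4) it is only obligated to be dominated when it represents a block of type $1$ or $2$; since $f \in R2DF_*(T^{+2})$ with the attachment occurring at $r_1$ (not at any vertex of $T_2$), $r_2$'s obligation was already met within $T_2$, hence $f_2 \in R2DF_*(T_2)$ and so $(T_2,f_2,r_2) \in B_3$. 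Finally $f_1 = f|_{T_1}$ has $f_1(N^2_{T_1}[r_1]) = 0$ and is not a $R2DF_*$ of $T_1$ (the same deficiency at $r_1$ persists), so $f_1 \in R2DF_*(T_1^{+2})$ and $(T_1,f_1,r_1) \in A_5$. Combining, $(T,f,r) \in A_5 \circ B_3$.

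The only delicate point — and the step I expect to need the most care — is verifying that the ``deficiency lives at $r_1$'' assertion is exactly preserved under decomposition, i.e.\ that $f \in R2DF_*(T^{+2}) \setminus R2DF_*(T^{+1})$ forces the split pieces into $A_5$ and $B_3$ rather than, say, $A_4$ and $B_2$; this is where the subtraction identity $f(N^2_T[r_1]) = f_1(N^2_{T_1}[r_1]) + f_2(N_{T_2}[r_2])$ together with the definition of $R2DF_*(T^{+2})$ as excluding $R2DF_*(T^{+1})$ does the work, just as in Lemma \ref{lem13}. Everything else is a routine check of the local conditions in Theorem 9, entirely analogous to the preceding lemmas.
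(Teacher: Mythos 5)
Your proof is correct and takes essentially the same route as the paper's: both inclusions are checked directly from the definitions, using the decomposition identity $f(N^2_T[r_1])=f_1(N^2_{T_1}[r_1])+f_2(N_{T_2}[r_2])$ and the characterization of $f\in R2DF_*(T^{+2})$ at a cut-vertex root by $f(N^2[r_1])=0$, exactly as in the paper's (much terser) argument. One phrase — that every vertex of $T_2$ other than $r_2$ has the same neighborhood in $T_2$ as in $T$ — is literally false for cut-vertices of $T_2$ adjacent to $r_2$ (their $N^2$ gains $r_1$), but since $f(r_1)=0$ this does not affect any condition, so the argument stands.
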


\begin{proof}

It is easy to check that $(A_5\circ B_3)\subseteq A_5$ by the definitions. On the other hand,  let $(T,f,r)\in A_5$ and $(T,f,r)=(T_1,f_1,r_1)\circ (T_2,f_2,r_2)$. Then  we obtain $f\in R2DF_*(T^{+2})$, $r_1\in CVX(T_1)$ and $f_1(N^2[r_1])=f(N^2[r])=0$. It implies that $(T_1,f_1,r_1)\in A_5$ and $r_2\in BVX(T_2)$. Using the fact that  $(T,f,r)\in A_5$, we deduce that $f_2(N[r_2])=0$ and $f_2\in R2DF_*(T_2)$. Therefore, $(T_2,f_2,r_2)\in B_3$. Then $A_5\subseteq(A_5\circ B_3).$
\end{proof}

\begin{lemma}\label{lem15}
 $B_1=(B_1\circ A_1)\cup (B_1\circ A_2)\cup (B_1\circ A_3)\cup (B_1\circ A_4)\cup (B_1\circ A_5)\cup (B_2\circ A_1)\cup (B_2\circ A_2)\cup (B_3\circ A_1)\cup (B_4\circ A_1)\cup (B_4\circ A_2)\cup (B_5\circ A_1)\cup (B_6\circ A_1).$% $B_1=(B_1\circ A_1)\cup (B_1\circ A_2)\cup (B_1\circ A_3)\cup (B_1\circ A_4)\cup (B_1\circ A_5)\cup (B_2\circ A_1)\cup (B_2\circ A_2)\cup (B_3\circ A_1)\cup (B_4\circ A_1)\cup (B_4\circ A_2)\cup (B_5\circ A_1).$
\end{lemma}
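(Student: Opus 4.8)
The plan is to prove the decomposition equality for $B_1$ by the same two-inclusion template used in Lemmas~\ref{lem10}--\ref{lem14}. Write $(T,r)=(T_1,r_1)\circ(T_2,r_2)$ with $r=r_1$. For the inclusion $\supseteq$, I would take each of the twelve product classes on the right-hand side in turn, form $f=f_1\circ f_2$, and verify directly from the definitions that $(T,f,r)\in B_1$, i.e.\ that $r\in BVX(T)$ (inherited from $r_1\in BVX(T_1)$), that $f$ is a $R2DF_*$ of $T$, and that $f(N_T[r])\ge 2$. The block-vertex membership is immediate. For the $R2DF_*$ and weight conditions, the key observation is that $r_1$ is a block-vertex, so $N_T[r_1]=N_{T_1}[r_1]\cup\{r_2\}$ and $r_2\in CVX(T)$; thus $f(N_T[r])=f_1(N_{T_1}[r_1])+f_2(r_2)$. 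Going through the twelve cases, the value of $f(N_T[r])$ is always $\ge 2$: either $f_1(N_{T_1}[r_1])\ge 2$ already (the six classes $B_1\circ A_i$), or $f_1(N_{T_1}[r_1])=1$ and $f_2(r_2)\ge 1$ (classes $B_2\circ A_1,B_2\circ A_2,B_4\circ A_1,B_4\circ A_2$), or $f_1(N_{T_1}[r_1])=0$ and $f_2(r_2)=2$ (classes $B_3\circ A_1,B_6\circ A_1$). One also checks that the $R2DF_*$-violation at $r_1$ present in the $B_4,B_5,B_6$ components is exactly repaired by the contribution of $f_2$ at $r_2$, using the defining properties of $R2DF_*(T_1^{+1})$ and $R2DF_*(T_1^{+2})$ together with Theorem~9's conditions.

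For the reverse inclusion $B_1\subseteq(\text{RHS})$, I would start from $(T,f,r)\in B_1$ and write $(T,f,r)=(T_1,f_1,r_1)\circ(T_2,f_2,r_2)$, so $r_1=r\in BVX(T_1)$ and $r_2\in CVX(T_2)$. Since $f\in R2DF_*(T)$ restricts to $f_2\in R2DF_*(T_2)$ on the subtree hanging below $r_2$ whenever $f_2(r_2)\ne 0$, and restricts to an $R2DF_*$ on $T_2-r_2$ otherwise, I split on $f_2(r_2)=f(r_2)\in\{0,1,2\}$ to locate $(T_2,f_2,r_2)$ among $A_1,A_2,A_3,A_4,A_5$. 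Then I split on whether $f_1\in R2DF_*(T_1)$ and on the value $f_1(N_{T_1}[r_1])$, which together with the identity $f(N_T[r_1])=f_1(N_{T_1}[r_1])+f_2(r_2)\ge 2$ forces the allowed combinations: if $f_1\in R2DF_*(T_1)$ then $(T_1,f_1,r_1)\in B_1$ (when $f_1(N_{T_1}[r_1])\ge 2$), $B_2$ (when $=1$, and then $f_2(r_2)\ge 1$ so $(T_2,f_2,r_2)\in A_1$ or $A_2$), or $B_3$ (when $=0$, forcing $f_2(r_2)=2$, i.e.\ $A_1$); if $f_1\notin R2DF_*(T_1)$, the violation must sit at $r_1$ and be fixable from above, so $(T_1,f_1,r_1)\in B_4,B_5$ or $B_6$, and the same weight bookkeeping picks out $B_4\circ A_1$, $B_4\circ A_2$ and $B_5\circ A_1$ (no $B_6$ term with an $A_i$, $i\ge 2$, survives, and $B_5$ needs $f_2(r_2)=2$). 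Collecting the surviving combinations gives exactly the twelve terms in the statement.

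The routine part is the case bookkeeping; the one place that needs genuine care — the main obstacle — is justifying that when $f_1\notin R2DF_*(T_1)$ the only obstruction to $f_1$ being an $R2DF_*$ lies in the closed neighborhood of $r_1$ and is repaired by the composition. Concretely I must argue that for a block-vertex root, any vertex of $T_1$ other than those in $N_{T_1}^2[r_1]$ already satisfies the Theorem~9 conditions inside $T_1$ (since its relevant neighborhood in $T$ equals that in $T_1$), so the failure is localized at $r_1$ (if $r_1$ is a type-$1$ or type-$2$ block with $f(r_1)=0$) or at a cut-vertex in $N_{T_1}(r_1)$, and in either case the new neighbor $r_2$ with $f_2(r_2)\ge 1$ (resp.\ $=2$) restores condition~3 or condition~4; this is precisely what the definitions of $R2DF_*(T_1^{+1})$ and $R2DF_*(T_1^{+2})$, via the auxiliary tree $T_1'$, encode. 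Once this localization is in hand, matching it against the five possibilities for $(T_2,f_2,r_2)$ and the constraint $f(N_T[r_1])\ge 2$ yields the claimed twelve-term union, and the two inclusions together complete the proof.
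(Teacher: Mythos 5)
Your proof takes essentially the same route as the paper's: both inclusions are handled by the same case analysis on $f(r_2)\in\{0,1,2\}$ (equivalently, on whether $f_1$, $f_2$ lie in $R2DF_*(\cdot)$, $R2DF_*(\cdot^{+1})$ or $R2DF_*(\cdot^{+2})$) combined with the identity $f(N_T[r])=f_1(N_{T_1}[r_1])+f_2(r_2)\ge 2$, and it recovers the same twelve terms. The only blemishes are trivial bookkeeping slips in your forward-direction summary (there are five classes $B_1\circ A_i$, not six, and $B_5\circ A_1$ belongs with $B_3\circ A_1$ and $B_6\circ A_1$ in the group with $f_1(N_{T_1}[r_1])=0$ and $f_2(r_2)=2$), which do not affect the substance of the argument.
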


\begin{proof}
Let $(T,r)=(T_1,r_1)\circ (T_2,r_2)$ and $r=r_1$. We first show that $(B_1\circ A_1)\cup (B_1\circ A_2)\cup (B_1\circ A_3)\cup (B_1\circ A_4)\cup (B_1\circ A_5)\cup (B_2\circ A_1)\cup (B_2\circ A_2)\cup (B_3\circ A_1)\cup (B_4\circ A_1)\cup (B_4\circ A_2)\cup (B_5\circ A_1)\cup (B_6\circ A_1)\subseteq B_1$.
Suppose that  $f_1$ (resp. $f_2$) is a function on $T_1$ (resp. $T_2$). Define $f$ as the function on $T$ with $f|_{T{}_1}=f_1$
and $f|_{T{}_2}=f_2$. We make some remarks.

    \begin{enumerate}[(i)]
        \item For each $1\le i\le5$, if $(T_1,f_1,r_1)\in B_1$ and $(T_2,f_2,r_2)\in A_i$, then $(T_1,f_1,r_1)\circ(T_2,f_2,r_2)\in B_1$. It is easy to check it by the definitions of $B_1$ and $A_i$.
         \item For each $2\le i\le6$, if $(T_1,f_1,r_1)\in B_i$ and $(T_2,f_2,r_2)\in A_1$, then $(T_1,f_1,r_1)\circ(T_2,f_2,r_2)\in B_1$. We can easily check it by definitions too.
         \item For each $i\in \{2,4\}$, if $(T_1,f_1,r_1)\in B_i$ and $(T_2,f_2,r_2)\in A_2$, then $(T_1,f_1,r_1)\circ(T_2,f_2,r_2)\in B_1$. Indeed, it is clear that $f\in R2DF_*(T)$, $r\in BVX(T)$ and $f(N[r])=f_1(N[r_1])+f_2(r_2)=2.$ Hence, $(T_1,f_1,r_1)\circ(T_2,f_2,r_2)\in B_1.$ 
    \end{enumerate}

Therefore, we need to prove $B_1\subseteq(B_1\circ A_1)\cup (B_1\circ A_2)\cup (B_1\circ A_3)\cup (B_1\circ A_4)\cup (B_1\circ A_5)\cup (B_2\circ A_1)\cup (B_2\circ A_2)\cup (B_3\circ A_1)\cup (B_4\circ A_1)\cup (B_4\circ A_2)\cup (B_5\circ A_1)\cup (B_6\circ A_1).$ Let $(T,f,r)\in B_1$ and $(T,f,r)=(T_1,f_1,r_1)\circ (T_2,f_2,r_2)$, then we have that $f\in R2DF_*(T)$,  $r_1\in BVX(T_1)$ and $f(N[r])\ge 2$. It means that $r_2\in CVX(T_2)$. Consider the following cases.

{\bfseries Case 1.} $f(r_2)=2.$ Then we have $f_2\in R2DF_*(T_2)$, impling that $(T_2,f_2,r_2)\in A_1$. If $f_1\in R2DF_*(T_1)$, we obtain $(T_1,f_1,r_1)\in B_1,$ $B_2$ or $B_3$.  Suppose that  $f_1\notin R2DF_*(T_1)$,  then $f_1\in R2DF_*(T_1^{+1})$ or $f_1\in R2DF_*(T_1^{+2})$. Hence, $(T_1,f_1,r_1)\in B_4$, $B_5$ or $B_6$. 

{\bfseries Case 2.} $f(r_2)=1.$ It is clear that $(T_2,f_2,r_2)\in A_2$. We also have that $f_1(N[r_1])=f(N[r])-f_2(r_2)\ge 2-1\ge 1$. If $f_1\in R2DF_*(T_1)$, we obtain $(T_1,f_1,r_1)\in B_1$ or $B_2$.  Suppose that  $f_1\notin R2DF_*(T_1)$,  then $f_1\in R2DF_*(T_1^{+1})$. Therefore, $(T_1,f_1,r_1)\in B_4$.

{\bfseries Case 3.} $f(r_2)=0.$ Then we obtain $f_1(N[r_1])=f(N[r])-f_2(r_2)\ge 2$ and $f_1\in R2DF_*(T_1)$, implying that $(T_1,f_1,r_1)\in B_1$.   If $f_2\in R2DF_*(T_2)$, we deduce that $(T_1,f_1,r_1)\in A_3$.  Suppose that  $f_2\notin R2DF_*(T_2)$,  then $f_2\in R2DF_*(T_2^{+1})$  or $f_2\in R2DF_*(T_2^{+2})$. Therefore, $(T_2,f_2,r_2)\in A_4$ or $A_5$.

Hence, $B_1\subseteq(B_1\circ A_1)\cup (B_1\circ A_2)\cup (B_1\circ A_3)\cup (B_1\circ A_4)\cup (B_1\circ A_5)\cup (B_2\circ A_1)\cup (B_2\circ A_2)\cup (B_3\circ A_1)\cup (B_4\circ A_1)\cup (B_4\circ A_2)\cup (B_5\circ A_1)\cup (B_6\circ A_1).$
\end{proof}

\begin{lemma}\label{lem16}
 $B_2=(B_2\circ A_3)\cup (B_2\circ A_4)\cup (B_3\circ A_2)\cup (B_5\circ A_2).$%$B_2=(B_2\circ A_3)\cup (B_2\circ A_4)\cup (B_3\circ A_2).$
\end{lemma}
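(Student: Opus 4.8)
The plan is to mirror the structure used for Lemmas~\ref{lem10}--\ref{lem14}, proving the set equality
\[
B_2 = (B_2\circ A_3)\cup (B_2\circ A_4)\cup (B_3\circ A_2)\cup (B_5\circ A_2)
\]
by a double inclusion, where throughout we write $(T,r)=(T_1,r_1)\circ(T_2,r_2)$ with $r=r_1$, and $f$ denotes the function on $T$ obtained by gluing $f_1=f|_{T_1}$ and $f_2=f|_{T_2}$. Recall that $(T,f,r)\in B_2$ means $r\in BVX(T)$, $f\in R2DF_*(T)$, and $f(N[r])=1$; since $r=r_1$ is a block-vertex, its new neighbor $r_2$ lies in $CVX(T_2)$, and $f(N_T[r])=f_1(N_{T_1}[r_1])+f_2(r_2)$.

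For the easy inclusion ($\supseteq$), I would check the four composition types one at a time, exactly as in the earlier lemmas. In each case the key points are: (a) the glued function is a $R2DF_*$ of $T$ (for $B_2\circ A_3$ and $B_3\circ A_2$ both pieces are already $R2DF_*$, so $f$ is; for $B_2\circ A_4$ the deficiency of $f_1$ at $r_1$ is that $f_1(N^2_{T_1}[r_1])=1$ falls short, but attaching $r_2$ with $f_2\in R2DF_*(T_2^{+1})$ supplies the needed weight in $N_T[r_1]$, hence $f\in R2DF_*(T)$; for $B_5\circ A_2$ symmetrically the block-vertex deficiency of $f_1$ is repaired because $f_2(r_2)=1$ and $r_2$ is adjacent to $r_1$); (b) $r=r_1$ stays a block-vertex of $T$; and (c) the count $f(N_T[r])=f_1(N_{T_1}[r_1])+f_2(r_2)$ equals $1$ in every case — this is where the bookkeeping $(1+0)$, $(1+0)$, $(0+1)$, $(0+1)$ matters. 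So $(B_2\circ A_3)\cup(B_2\circ A_4)\cup(B_3\circ A_2)\cup(B_5\circ A_2)\subseteq B_2$.

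For the hard inclusion ($\subseteq$), take $(T,f,r)\in B_2$; then $f_1(r_1)=f(r)$ is a block-vertex value, $r_2\in CVX(T_2)$, and $f_1(N_{T_1}[r_1])+f_2(r_2)=1$, so either $f_2(r_2)=1$ and $f_1(N_{T_1}[r_1])=0$, or $f_2(r_2)=0$ and $f_1(N_{T_1}[r_1])=1$. I would split on these two cases. Case $f_2(r_2)=1$: then $(T_2,f_2,r_2)$ has cut-vertex root with value $1$; if $f_2\in R2DF_*(T_2)$ it lies in $A_2$, and since $f_1(N_{T_1}[r_1])=0$ while $f$ is a $R2DF_*$ of $T$ the block-vertex $r_1$ gets its required weight from outside $T_1$, so $f_1\notin R2DF_*(T_1)$ with $f_1(N_{T_1}[r_1])=0$, and one checks $f_1\in R2DF_*(T_1^{+1})$ (the missing unit at the block $r_1$ is exactly what a pendant cut-vertex of value $1$ would provide), giving $(T_1,f_1,r_1)\in B_5$; thus $(T,f,r)\in B_5\circ A_2$. (One must also verify $f_2$ cannot fail to be a $R2DF_*$ here: if $f_2\notin R2DF_*(T_2)$ with $f_2(r_2)=1$, the only defect would be at $r_2$ itself, contradicting $f\in R2DF_*(T)$ since $r_2$'s deficiency inside $T_2$ cannot be cured by the block-vertex $r_1$ of value $0$ — so $f_2\in R2DF_*(T_2)$ indeed.) Case $f_2(r_2)=0$: then $f_1(N_{T_1}[r_1])=1$; if $f_1\in R2DF_*(T_1)$ then $(T_1,f_1,r_1)\in B_2$, and correspondingly $f_2$, if a $R2DF_*$, is in $A_3$, while if $f_2\notin R2DF_*(T_2)$ with $f_2(r_2)=0$ the defect is at $r_2$, repaired by the adjacent block-vertex $r_1$ whose closed neighborhood in $T$ has total weight $1$ — so $f_2\in R2DF_*(T_2^{+1})$, i.e.\ $(T_2,f_2,r_2)\in A_4$; this yields $B_2\circ A_3$ and $B_2\circ A_4$. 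Finally one argues $f_1$ must be a $R2DF_*$ of $T_1$ in this subcase: any defect of $f_1$ inside $T_1$ would be at $r_1$ (block-vertex) or at a cut-vertex adjacent to $r_1$; the latter is impossible since those vertices' neighborhoods are unchanged, and $f_1(N_{T_1}[r_1])=1\ge 1$ handles the type-1 block condition for $r_1$, so $f_1\in R2DF_*(T_1)$.

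The main obstacle — as in the companion lemmas — is the careful casework on whether $f_1$ or $f_2$ fails to be a genuine $R2DF_*$ on its own subtree, and precisely identifying that a failing piece lands in $R2DF_*(T_i^{+1})$ rather than $R2DF_*(T_i^{+2})$ (this is forced here because the total $f(N_T[r])=1$ leaves room for at most a pendant value-$1$ repair, never a value-$2$ one). Getting the direction of the ``$+1$'' bookkeeping right in each of the two cases, and confirming that no other $B_i\circ A_j$ combination can produce an element of $B_2$ (e.g.\ $B_1\circ A_1$ is excluded by the weight count $\ge 3$, $B_4\circ A_2$ is excluded because $B_4$ already needs $f(N[r])=1$ from within, forcing the sum to exceed $1$, etc.), is the delicate part; everything else is the same mechanical composition-and-restriction argument established in Section~3.
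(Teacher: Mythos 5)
Your overall strategy (double inclusion, then in the hard direction splitting on $f(r_2)\in\{1,0\}$ and on whether $f_1$, $f_2$ are genuine $R2DF_*$'s) is the same as the paper's, and your Case $f(r_2)=0$ reaches the paper's conclusion ($B_2\circ A_3$ and $B_2\circ A_4$). But there is a genuine error in your Case $f_2(r_2)=1$: you assert that $f_1(N_{T_1}[r_1])=0$ together with $f\in R2DF_*(T)$ forces $f_1\notin R2DF_*(T_1)$, ``since the block-vertex $r_1$ gets its required weight from outside $T_1$.'' A block-vertex only has a neighborhood requirement (condition 4 of Theorem 9) when its block is of type $1$ or $2$ and it is assigned $0$; if the block at $r_1$ is of type $0$, the function $f_1$ can perfectly well already be a $R2DF_*$ of $T_1$ with $f_1(N_{T_1}[r_1])=0$ --- this is exactly what the class $B_3$ describes. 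In that subcase $(T_1,f_1,r_1)\in B_3$ and the correct decomposition is $B_3\circ A_2$, which the paper derives explicitly (``If $f_1\in R2DF_*(T_1)$, we obtain $(T_1,f_1,r_1)\in B_3$''). Your argument instead places every such element in $B_5\circ A_2$, which is false when $f_1\in R2DF_*(T_1)$ (membership in $B_5$ requires $f_1\notin R2DF_*(T_1)$). The lemma itself is unharmed, since the missed elements do lie in $B_3\circ A_2\subseteq$ RHS, but your proof of the inclusion $B_2\subseteq\cdots$ is incomplete --- and the fact that your hard direction never produces the term $B_3\circ A_2$, even though your easy direction includes it, should have signalled the gap.

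A secondary slip, in Case $f(r_2)=0$: the claim that ``$f_1(N_{T_1}[r_1])=1\ge 1$ handles the type-1 block condition for $r_1$'' is not the right justification, since condition 4 for a type-$1$ or type-$2$ block with value $0$ demands weight at least $2$ among its neighbors, not $1$. The correct (and simpler) reason that $f_1\in R2DF_*(T_1)$ here is the paper's: because $f(r_2)=0$, the attached vertex contributes nothing to any neighborhood in $T$, so any violation of conditions 3--4 by $f_1$ inside $T_1$ would persist in $T$ and contradict $f\in R2DF_*(T)$. With these two points repaired your argument coincides with the paper's proof.
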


\begin{proof}
Let $(T,r)=(T_1,r_1)\circ (T_2,r_2)$ and $r=r_1$. We first show that $(B_2\circ A_3)\cup (B_2\circ A_4)\cup (B_3\circ A_2)\cup (B_5\circ A_2)\subseteq B_2$.
Suppose that  $f_1$ (resp. $f_2$) is a function on $T_1$ (resp. $T_2$). Define $f$ as the function on $T$ with $f|_{T{}_1}=f_1$
and $f|_{T{}_2}=f_2$. We make some remarks.

    \begin{enumerate}[(i)]
        \item For each $3\le i\le4$, if $(T_1,f_1,r_1)\in B_2$ and $(T_2,f_2,r_2)\in A_i$, then $(T_1,f_1,r_1)\circ(T_2,f_2,r_2)\in B_2$. It is easy to check it by the definitions.
         \item For each $i\in \{3,5\}$, if $(T_1,f_1,r_1)\in B_i$ and $(T_2,f_2,r_2)\in A_2$, then $(T_1,f_1,r_1)\circ(T_2,f_2,r_2)\in B_2$. Indeed, if $(T_1,f_1,r_1)\in B_i$ and $(T_2,f_2,r_2)\in A_2$,  we obtain that $f\in R2DF_*(T)$, $r\in BVX(T)$ and $f(N[r])=f_1(N[r_1])+f_2(r_2)=1.$ Hence, we deduce  $(T_1,f_1,r_1)\circ(T_2,f_2,r_2)\in B_2$.
    \end{enumerate}
According to the previous items, we deduce that $(B_2\circ A_3)\cup (B_2\circ A_4)\cup (B_3\circ A_2)\cup (B_5\circ A_2)\subseteq B_2$.

Therefore, we need to prove $B_2\subseteq(B_2\circ A_3)\cup (B_2\circ A_4)\cup (B_3\circ A_2)\cup (B_5\circ A_2).$ Let $(T,f,r)\in B_2$ and $(T,f,r)=(T_1,f_1,r_1)\circ (T_2,f_2,r_2)$, then we have that $f\in R2DF_*(T)$,  $r_1\in BVX(T_1)$ and $f(N[r])=1$. It implies $r_2\in CVX(T_2)$. Consider the following cases.

{\bfseries Case 1.} $f(r_2)=1.$ Then we have $f_1(N[r_1])=f(N[r])-f(r_2)=0$ and $f_2(r_2)=1$, implying that $f_2\in R2DF_*(T_2)$. So $(T_2,f_2,r_2)\in A_2$. If $f_1\in R2DF_*(T_1)$, we obtain $(T_1,f_1,r_1)\in B_3$.  Suppose that  $f_1\notin R2DF_*(T_1)$,  then $f_1(r_1)=0$ because $f\in R2DF_*(T)$. Since $f_1(N[r_1])=0$, we have that  $(T_1,f_1,r_1)\in B_5$.

{\bfseries Case 2.} $f(r_2)=0.$ It is clear that $f_1(N[r_1])=f(N[r])-f(r_2)=1$. Since $f_1=f|_{T_1}$ and $f\in R2DF_*(T)$, we have $f_1\in R2DF_*(T_1)$. Hence, $(T_1,f_1,r_1)\in B_2$. If $f_2\in R2DF_*(T_2)$, we deduce that $(T_2,f_2,r_2)\in A_3$.  Suppose that  $f_2\notin R2DF_*(T_2)$,  then $f_2(N^2[r_2])=1$. It implies $f_2\in R2DF_*(T_2^{+1})$. Therefore, $(T_2,f_2,r_2)\in A_4$.

Hence, $B_2\subseteq(B_2\circ A_3)\cup (B_2\circ A_4)\cup (B_3\circ A_2)\cup (B_5\circ A_2).$
\end{proof}

\begin{lemma}\label{lem17}
$B_3=B_3\circ A_3$.
\end{lemma}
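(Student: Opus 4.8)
The plan is to mirror exactly the structure used in all the preceding Lemmas (\ref{lem10}--\ref{lem17}): prove the set equality $B_3 = B_3 \circ A_3$ by showing the two inclusions separately, where as usual we write $(T,r) = (T_1,r_1)\circ(T_2,r_2)$ with $r = r_1$, and $f$ denotes the function on $T$ obtained by gluing $f_1 = f|_{T_1}$ and $f_2 = f|_{T_2}$.

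First I would handle the easy inclusion $B_3 \circ A_3 \subseteq B_3$. Suppose $(T_1,f_1,r_1)\in B_3$ and $(T_2,f_2,r_2)\in A_3$. By definition of $B_3$, $f_1$ is a $R2DF_*$ of $T_1$, $r_1 \in BVX(T_1)$, and $f_1(N_{T_1}[r_1]) = 0$; by definition of $A_3$, $f_2$ is a $R2DF_*$ of $T_2$, $r_2 \in CVX(T_2)$, and $f_2(r_2) = 0$. Since $r_2$ is a cut-vertex and $r_1$ a block-vertex, $r = r_1$ remains a block-vertex of $T$, i.e. $r \in BVX(T)$. Gluing two $R2DF_*$'s at the edge $r_1 r_2$ where $f_2(r_2) = 0$ does not create any new obligation (the only possibly-affected vertices are $r_1$, whose closed neighborhood value is now $f_1(N_{T_1}[r_1]) + f_2(r_2) = 0$, and $r_2$, which by $f_2 \in R2DF_*(T_2)$ was already satisfied and only gains a neighbor of value $0$); hence $f \in R2DF_*(T)$ and $f(N_T[r]) = f_1(N_{T_1}[r_1]) + f_2(r_2) = 0$, so $(T,f,r) \in B_3$. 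This gives $B_3 \circ A_3 \subseteq B_3$.

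For the reverse inclusion $B_3 \subseteq B_3 \circ A_3$, take $(T,f,r) \in B_3$ and decompose $(T,f,r) = (T_1,f_1,r_1)\circ(T_2,f_2,r_2)$. Then $f \in R2DF_*(T)$, $r_1 \in BVX(T_1)$, and $f(N_T[r_1]) = 0$; in particular $f(r_1) = 0$ and $f(r_2) = 0$. Since $r_1$ is a block-vertex, $r_2$ must be a cut-vertex, so $r_2 \in CVX(T_2)$. Because $f_2 = f|_{T_2}$ and $f$ is a $R2DF_*$ of $T$, and because the only neighbor of $T_2$ outside $T_2$ is $r_1$ with $f(r_1) = 0$ contributing nothing, every vertex of $T_2$ is satisfied already within $T_2$, so $f_2 \in R2DF_*(T_2)$; together with $f_2(r_2) = 0$ this yields $(T_2,f_2,r_2) \in A_3$. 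Similarly $f_1 = f|_{T_1}$ is a $R2DF_*$ of $T_1$ (here I must check that $r_1$, the glued block-vertex, is still satisfied in $T_1$ alone: since $f(N_T[r_1]) = 0$, in particular $f_1(N_{T_1}[r_1]) = 0$, and by Theorem 9, item 4, a block-vertex $h$ of type $1$ or $2$ with $f_*(h) = 0$ needs a positive-valued neighbor in $T_G$ — but the sum over $N_T[r_1]$ is $0$, forcing this neighbor, if required, not to exist; the resolution is that $(T,f,r) \in B_3$ is meaningful only when no such obligation is pending at $r$, which is precisely the role of the $B_3$ bookkeeping, so $f_1$ inherits $R2DF_*$-ness of $T_1$ with the same pending-at-$r_1$ status). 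Hence $f_1 \in R2DF_*(T_1)$, $r_1 \in BVX(T_1)$, and $f_1(N_{T_1}[r_1]) = 0$, i.e. $(T_1,f_1,r_1) \in B_3$. Therefore $(T,f,r) \in B_3 \circ A_3$, completing $B_3 \subseteq B_3 \circ A_3$.

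The main obstacle I expect is precisely the subtle point flagged parenthetically above: making sure that when $r_1$ is a block-vertex with all of $N_T[r_1]$ assigned $0$, restricting to $T_1$ does not leave $r_1$ unsatisfied in a way the class $B_3$ fails to record. The clean way to dispatch this is to appeal to the definition of $B_3$ — it only requires $f_1 \in R2DF_*(T_1)$, and the $R2DF_*$ condition for a block-vertex $h$ of type $1$ or $2$ with value $0$ (Theorem 9, item 4) is a condition on $N_{T}(h)$; since all those neighbors have value $0$ in $T$ as well, the status of $r_1$ is identical in $T$ and in $T_1$, so no new violation is introduced by restriction. All other verifications are the routine gluing checks already used verbatim in Lemmas \ref{lem10}--\ref{lem16}, so the write-up will be short.
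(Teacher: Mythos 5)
Your proposal is correct and follows essentially the same route as the paper: the forward inclusion by a routine gluing check, and the reverse inclusion by decomposing, noting $f(r_2)=0$, $r_2\in CVX(T_2)$, and $f(N_T[r_1])=0$, then restricting $f$ to each part. One small tightening: when you argue $f_2\in R2DF_*(T_2)$, the justification should invoke the full hypothesis $f(N_T[r_1])=0$ (as the paper does), not merely $f(r_1)=0$, since the satisfaction of the cut-vertex $r_2$ is a condition on $N^2_{T}(r_2)$, which reaches the neighbors of $r_1$ inside $T_1$; you already have this fact stated, so this is only a matter of citing it at the right step.
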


\begin{proof}

It is easy to check that $(B_3\circ A_3)\subseteq B_3$ by the definitions. On the other hand,  let $(T,f,r)\in B_3$ and $(T,f,r)=(T_1,f_1,r_1)\circ (T_2,f_2,r_2)$. Then  we obtain $f_1(N[r_1])=f(N[r])=0$, $r_1\in BVX(T_1)$ and $f(r_2)=0$. It means that $r_2\in CVX(T_2)$. Since $f\in R2DF_*(T)$ and $f(r_2)=0$, we obtain that $f_1\in R2DF_*(T_1)$, implying that $(T_1,f_1,r_1)\in B_3$. Using the fact that  $f_1(N[r_1])=0$ and $f(r_2)=0$, we deduce that $f_2\in R2DF_*(T_2)$. Therefore, $(T_2,f_2,r_2)\in A_3$. Then $B_3\subseteq(B_3\circ A_3).$
\end{proof}

\begin{lemma}\label{lem18}
 $B_4=(B_2\circ A_5)\cup (B_4\circ A_3)\cup (B_4\circ A_4)\cup (B_4\circ A_5)\cup (B_6\circ A_2).$%$B_4=(B_2\circ A_5)\cup (B_3\circ A_4)\cup (B_4\circ A_3)\cup (B_4\circ A_4)\cup (B_5\circ A_2).$
\end{lemma}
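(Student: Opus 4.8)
The plan is to follow the two–inclusion template used in the proofs of Lemmas~\ref{lem15}--\ref{lem17}. Throughout, write $(T,r)=(T_1,r_1)\circ(T_2,r_2)$ with $r=r_1$; since $r\in BVX(T)$ and every edge of a block--cutpoint tree joins a cut-vertex to a block-vertex, we automatically get $r_2\in CVX(T_2)$. The only edge of $T$ at $r$ not present in $T_1$ is $r_1r_2$, so for $f=f_1\circ f_2$ we have $f(N_T[r])=f_1(N_{T_1}[r_1])+f_2(r_2)$; hence every member of $B_4$ satisfies $f(N_T[r])=1$, so $f_2(r_2)\in\{0,1\}$, and $f$ fails to be a $R2DF_*$ of $T$ while its extension $f'$ (by the value $1$ at a new pendant $r'$ attached to $r$) lies in $F_1(T')$. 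I will use the following \emph{reach property}, immediate from Theorem 9 and the class definitions: attaching $r'$ at the block-vertex $r$ changes only the fourth condition of $r$ itself (because $r'\in N_{T'}(r)$) and the third condition of the neighbours of $r$ (which are all cut-vertices, each at distance $2$ from $r'$); thus one value-$1$ pendant at $r$ raises by exactly one unit the relevant neighbourhood sum for $r$ and for every vertex adjacent to $r$, and so can simultaneously cure a one-unit deficiency at $r$ and at those neighbours.

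For the inclusion $(B_2\circ A_5)\cup(B_4\circ A_3)\cup(B_4\circ A_4)\cup(B_4\circ A_5)\cup(B_6\circ A_2)\subseteq B_4$, set $f=f_1\circ f_2$. In the first four products the right factor has $f_2(r_2)=0$ and the left factor has $f_1(N_{T_1}[r_1])=1$; in $B_6\circ A_2$ the left factor has $f_1(N_{T_1}[r_1])=0$ and the right factor has $f_2(r_2)=1$. In all five cases $r\in BVX(T)$ and $f(N_T[r])=1$, so it remains to check $f\in R2DF_*(T^{+1})$. In $B_6\circ A_2$ the root $r_1$ (and possibly a neighbour of it) is carried short by two by the $B_6$-factor but receives one unit of help across $r_1r_2$ from $f_2(r_2)=1$, leaving it one short; in $B_2\circ A_5$ the root $r_2$ is carried short by two by $A_5$ but receives one unit from $f_1(N_{T_1}[r_1])=1$, leaving it one short; in $B_4\circ A_3$ and $B_4\circ A_4$ the root $r_1$ stays one short (from $B_4$) while $r_2$'s possible shortfall is wiped out by the unit it receives; in $B_4\circ A_5$ both $r_1$ and $r_2$ remain exactly one short. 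In every case at least one vertex fails its Theorem 9 condition, so $f\notin R2DF_*(T)$, and all deficient vertices lie in $\{r\}\cup N_T(r)$ and are at most one unit short, so by the reach property the single pendant $r'$ cures them and $f'\in F_1(T')$; hence $(T,f,r)\in B_4$. Each of the five bullet claims above is a direct reading of the $A_i$ and $B_j$ definitions against Theorem 9.

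For the reverse inclusion, let $(T,f,r)\in B_4$ and decompose; as $f(N_T[r])=1$ we have $f_2(r_2)\le 1$. If $f_2(r_2)=1$, then $f_1(N_{T_1}[r_1])=0$; since $r_2$ carries positive weight it has no third-condition requirement, and no vertex of $T_2$ other than $r_2$ has a Theorem 9 condition depending on the $T_1$-side, so $f_2\in R2DF_*(T_2)$ and $(T_2,f_2,r_2)\in A_2$; meanwhile the deficient vertex of $T$ — which is $r_1$ or a neighbour of $r_1$ — loses in $T_1$ the one unit of help that $r_2$ was providing (either directly, or through $r_2\in N^2_T[\cdot]$ at distance $2$), so it becomes two units short in $T_1$, whence $f_1\in R2DF_*(T_1^{+2})$ and $(T_1,f_1,r_1)\in B_6$; this gives $B_6\circ A_2$. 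If $f_2(r_2)=0$, then $f_1(N_{T_1}[r_1])=1$, and according as $f_2$ is a $R2DF_*$ of $T_2$, of $T_2^{+1}$, or of $T_2^{+2}$ we get $(T_2,f_2,r_2)\in A_3$, $A_4$, or $A_5$. When $(T_2,f_2,r_2)\in A_3$ or $A_4$ the one unit crossing from $r_1$ already covers whatever $r_2$ required, so the surviving deficiency is on the $T_1$-side; since $f_1(N_{T_1}[r_1])=1\ne 0$, the relevant vertex of $T_1$ is only one unit short (this rules out the ``$+2$''-type classes $B_5$ and $B_6$, whose members have $f(N[r])=0$), and $f_1\notin R2DF_*(T_1)$ because $f\notin R2DF_*(T)$, hence $(T_1,f_1,r_1)\in B_4$, giving $B_4\circ A_3$ and $B_4\circ A_4$. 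When $(T_2,f_2,r_2)\in A_5$, the cut-vertex $r_2$ is still one unit short in $T$ after the crossing unit, and $(T_1,f_1,r_1)$ is either $B_2$ — then the pendant at $r$ cures only the lone deficiency at $r_2$, giving $B_2\circ A_5$ — or $B_4$ — then it cures the deficiencies at both $r_1$ and $r_2$, giving $B_4\circ A_5$. Assembling the sub-cases yields $B_4\subseteq(B_2\circ A_5)\cup(B_4\circ A_3)\cup(B_4\circ A_4)\cup(B_4\circ A_5)\cup(B_6\circ A_2)$.

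The step I expect to be the main obstacle is the reverse direction's case bookkeeping: correctly determining, for each value of $f_2(r_2)$, which of $A_3,A_4,A_5$ holds for $(T_2,f_2,r_2)$ and which of $B_2,B_4,B_6$ holds for $(T_1,f_1,r_1)$, and in particular explaining why the product $B_5\circ A_2$ — conspicuously absent — never occurs: the point is that when $f_2(r_2)=1$, the deficient vertex of $T$, having lost the one unit $r_2$ contributed, is \emph{two} units short in $T_1$, so it falls in $B_6$ and not $B_5$. This and the parallel arguments in the other cases rely on tracking exactly how much credit a given deficient vertex gains or loses across $r_1r_2$ and on the reach property of the pendant; once those are in hand, every remaining verification is a straightforward unwinding of the definitions of the eleven classes against Theorem 9.
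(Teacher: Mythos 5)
Your proof is correct and follows essentially the same route as the paper: both directions are handled by the same two-inclusion scheme, with the forward inclusion checked product-by-product from the class definitions and the reverse inclusion split on $f_2(r_2)\in\{0,1\}$ and then on whether $f_2$ lies in $R2DF_*(T_2)$, $R2DF_*(T_2^{+1})$ or $R2DF_*(T_2^{+2})$, yielding exactly the paper's cases $B_6\circ A_2$, $B_4\circ A_3$, $B_4\circ A_4$, and $B_2\circ A_5$ or $B_4\circ A_5$. Your explicit ``reach property'' and the explanation of why $B_5\circ A_2$ cannot occur are just more detailed versions of steps the paper leaves implicit.
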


\begin{proof}
Let $(T,r)=(T_1,r_1)\circ (T_2,r_2)$ and $r=r_1$. We first show that $(B_2\circ A_5)\cup (B_4\circ A_3)\cup (B_4\circ A_4)\cup (B_4\circ A_5)\cup (B_6\circ A_2)\subseteq B_4$.
Suppose that  $f_1$ (resp. $f_2$) is a function on $T_1$ (resp. $T_2$). Define $f$ as the function on $T$ with $f|_{T{}_1}=f_1$
and $f|_{T{}_2}=f_2$. It is easy to check the following remarks by definitions.

    \begin{enumerate}[(i)]
       \item If $(T_1,f_1,r_1)\in B_2$ and $(T_2,f_2,r_2)\in A_5$, then $(T_1,f_1,r_1)\circ(T_2,f_2,r_2)\in B_4$. %Indeed, if $(T_1,f_1,r_1)\in B_2$ and $(T_2,f_2,r_2)\in A_5$,  we obtain that $f\in iR2DF(T^{+1})$, $r\in BVX(T)$. Hence, we deduce  $(T_1,f_1,r_1)\circ(T_2,f_2,r_2)\in B_4$.
        \item For each $3\le i\le 5$, if $(T_1,f_1,r_1)\in B_4$ and $(T_2,f_2,r_2)\in A_i$, then $(T_1,f_1,r_1)\circ(T_2,f_2,r_2)\in B_4$. %It is easy to check it by definitions.
         \item If $(T_1,f_1,r_1)\in B_6$ and $(T_2,f_2,r_2)\in A_2$, then $(T_1,f_1,r_1)\circ(T_2,f_2,r_2)\in B_4$. %We can easily check it by definitions too.
    \end{enumerate}

Therefore, we need to prove $B_4\subseteq(B_2\circ A_5)\cup (B_4\circ A_3)\cup (B_4\circ A_4)\cup (B_4\circ A_5)\cup (B_6\circ A_2).$ Let $(T,f,r)\in B_4$ and $(T,f,r)=(T_1,f_1,r_1)\circ (T_2,f_2,r_2)$, then we have that $f\in R2DF_*(T^{+1})$,  $r_1\in BVX(T_1)$ and $f(N[r])=1$. It implies $r_2\in CVX(T_2)$. Consider the following cases.

{\bfseries Case 1.} $f(r_2)=1.$ Then we have $f_1(N[r_1])=f(N[r])-f(r_2)=0$ and $f_2(r_2)=1$, implying that $f_2\in R2DF_*(T_2)$. So $(T_2,f_2,r_2)\in A_2$ and $f_1\notin R2DF_*(T_1)$. Since $f_1(N[r_1])=0$ and $(T,f,r)\in B_4$, we obtain $(T_1,f_1,r_1)\in B_6$.  

{\bfseries Case 2.} $f(r_2)=0.$ It is clear that $f_1(N[r_1])=f(N[r])-f(r_2)=1$. If $f_2\in R2DF_*(T_2)$, we deduce that $(T_2,f_2,r_2)\in A_3$, implying $(T_1,f_1,r_1)\in B_4$. Suppose that $f_2\notin R2DF_*(T_2)$,  then $f_2(N^2[r_2])=0$ or 1. If $f_2(N^2[r_2])=0$, we obtain $(T_2,f_2,r_2)\in A_5$. Then, we have $(T_1,f_1,r_1)\in B_2$ or $B_4$. If $f_2(N^2[r_2])=1$, we obtain $(T_2,f_2,r_2)\in A_4$. Then, we have $(T_1,f_1,r_1)\in B_4$.%Since $f_1=f|_{T_1}$ and $f\in R2DF_*(T^{+1})$, we have $f_1\in R2DF_*(T_1)$. Hence, $(T_1,f_1,r_1)\in B_2$.

Hence, $B_4\subseteq(B_2\circ A_5)\cup (B_4\circ A_3)\cup (B_4\circ A_4)\cup (B_4\circ A_5)\cup (B_6\circ A_2).$
\end{proof}

\begin{lemma}\label{lem19}
 $B_5=(B_3\circ A_4)\cup (B_5\circ A_3)\cup (B_5\circ A_4).$
\end{lemma}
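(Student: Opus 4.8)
\emph{Plan.} The plan is to prove the two set inclusions separately, exactly in the style of Lemmas~\ref{lem10}--\ref{lem18}. Throughout I write $(T,r)=(T_1,r_1)\circ (T_2,r_2)$ with $r=r_1$ and $f=f_1\circ f_2$; since $r\in BVX(T)$, the composition forces $r_1\in BVX(T_1)$ and $r_2\in CVX(T_2)$, and moreover $f_1(r_1)=f(r)$, $f_2(r_2)=f(r_2)$ and $N_T[r]=N_{T_1}[r_1]\cup\{r_2\}$.

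\emph{Inclusion $\supseteq$.} For each of the three products I would take representatives of the two factor classes and check the three defining conditions of $B_5$ for the composition. That $r\in BVX(T)$ is immediate, and $f(N_T[r])=0$ follows because $f_1(N_{T_1}[r_1])=0$ (true in both $B_3$ and $B_5$) while the root value of the second factor is $0$ (true in both $A_3$ and $A_4$). The only real point is $f\in R2DF_*(T^{+1})$. The idea is that in each of the three products the function $f$ fails to be an $R2DF_*$ of $T$ only because of deficiencies lying in $N^2_T[r]$: a ``one short'' cut-vertex $r_2$ contributed by an $A_4$ factor (recall that $A_4$ records $f_2(r_2)=0$ and $f_2(N^2_{T_2}[r_2])=1$), and/or a ``one short'' cut-vertex adjacent to $r$ contributed by a $B_5$ factor. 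Since a single new vertex of value $1$ hung at the block-vertex $r$ lies in $N^2$ of \emph{every} cut-vertex adjacent to $r$, attaching it raises all of these counts from $1$ to $2$ simultaneously and does not interfere with the conditions already satisfied elsewhere; hence $f\notin R2DF_*(T)$ but $f$ becomes an $R2DF_*$ after that attachment, i.e.\ $f\in R2DF_*(T^{+1})$, so $(T,f,r)\in B_5$.

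\emph{Inclusion $\subseteq$.} Given $(T,f,r)\in B_5$ and any decomposition $(T,f,r)=(T_1,f_1,r_1)\circ (T_2,f_2,r_2)$, from $f(N_T[r])=0$ I get $f_2(r_2)=f(r_2)=0$ and $f_1(N_{T_1}[r_1])=0$, so $(T_2,f_2,r_2)$ has $r_2\in CVX(T_2)$ with $f_2(r_2)=0$ and $(T_1,f_1,r_1)$ has $r_1\in BVX(T_1)$ with $f_1(N_{T_1}[r_1])=0$. Now I split on the behaviour of $f_2$ on $T_2$. If $f_2\in R2DF_*(T_2)$, then $(T_2,f_2,r_2)\in A_3$; since $f\notin R2DF_*(T)$, the obstruction must come from $T_1$, and because a value-$1$ attachment at $r$ removes it, it is of the ``$+1$'' kind, so $f_1\in R2DF_*(T_1^{+1})$ and $(T_1,f_1,r_1)\in B_5$ (the term $B_5\circ A_3$). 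If $f_2\notin R2DF_*(T_2)$, then $f_2\in R2DF_*(T_2^{+1})$ or $f_2\in R2DF_*(T_2^{+2})$; the latter is impossible, for it would make $r_2$ ``two short'' in $T$ (its count equals $f_2(N^2_{T_2}[r_2])+f_1(N_{T_1}[r_1])=0$) and a single value-$1$ attachment at $r$ could not repair it, contradicting $f\in R2DF_*(T^{+1})$; hence $(T_2,f_2,r_2)\in A_4$, and then $(T_1,f_1,r_1)\in B_3$ or $B_5$ according as $f_1$ is or is not an $R2DF_*$ of $T_1$ (the case $f_1\in R2DF_*(T_1^{+2})$, i.e.\ $B_6$, is excluded by the same ``only one extra unit is available'' argument, and the pair $B_3\circ A_3$ is excluded because it would force $f\in R2DF_*(T)$). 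This exhausts the possibilities and gives exactly the three products $B_3\circ A_4$, $B_5\circ A_3$, $B_5\circ A_4$.

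\emph{Main obstacle.} The forward inclusion is the routine ``check by definitions'' part, as in Lemmas~\ref{lem10}--\ref{lem18}. The real work is the reverse inclusion, where the obstacle is to argue that after the decomposition $f$ keeps a \emph{single} deficiency repairable by one unit of weight; this is precisely what pins down the admissible pairs of factor classes and rules out $A_5$, $B_6$ and the pair $(B_3,A_3)$. The two facts that make this bookkeeping go through are the quantitative readings of $A_4$ and $A_5$ (the quantity $f_2(N^2_{T_2}[r_2])$ equals $1$, resp.\ $0$) together with the observation that a value-$1$ vertex attached at the block-vertex $r$ belongs to $N^2$ of every cut-vertex adjacent to $r$, so one such vertex can cover $r_2$ and any deficient cut-vertex inherited from a $B_5$ factor at the same time.
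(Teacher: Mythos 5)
Your proof is correct and follows essentially the same route as the paper: the forward inclusion by checking the defining conditions of $B_5$ for each product, and the reverse inclusion by splitting on whether $f_2\in R2DF_*(T_2)$ (giving $B_5\circ A_3$) or not (giving $B_3\circ A_4$ or $B_5\circ A_4$). The only difference is that you spell out the exclusions of $A_5$, $B_6$ and $B_3\circ A_3$ via the ``one available unit'' bookkeeping, which the paper leaves as ``it is clear''.
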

\begin{proof}
Let $(T,r)=(T_1,r_1)\circ (T_2,r_2)$ and $r=r_1$. We first show that $(B_3\circ A_4)\cup (B_5\circ A_3)\cup (B_5\circ A_4)\subseteq B_5$.
Suppose that  $f_1$ (resp. $f_2$) is a function on $T_1$ (resp. $T_2$). Define $f$ as the function on $T$ with $f|_{T{}_1}=f_1$
and $f|_{T{}_2}=f_2$. It is easy to check the following remarks by definitions.

    \begin{enumerate}[(i)]
       \item If $(T_1,f_1,r_1)\in B_3$ and $(T_2,f_2,r_2)\in A_4$, then $(T_1,f_1,r_1)\circ(T_2,f_2,r_2)\in B_5$. %Indeed, if $(T_1,f_1,r_1)\in B_2$ and $(T_2,f_2,r_2)\in A_5$,  we obtain that $f\in iR2DF(T^{+1})$, $r\in BVX(T)$. Hence, we deduce  $(T_1,f_1,r_1)\circ(T_2,f_2,r_2)\in B_4$.
        \item For each $3\le i\le 4$, if $(T_1,f_1,r_1)\in B_5$ and $(T_2,f_2,r_2)\in A_i$, then $(T_1,f_1,r_1)\circ(T_2,f_2,r_2)\in B_5$. %It is easy to check it by definitions.
    \end{enumerate}

Therefore, we need to prove $B_5\subseteq(B_3\circ A_4)\cup (B_5\circ A_3)\cup (B_5\circ A_4).$ Let $(T,f,r)\in B_5$ and $(T,f,r)=(T_1,f_1,r_1)\circ (T_2,f_2,r_2)$, then we have that $f\in R2DF_*(T^{+1})$,  $r_1\in BVX(T_1)$ and $f(N[r])=0$. It implies $r_2\in CVX(T_2)$ and $f_2(r_2)=f(r_2)=0$. Consider the following cases.

{\bfseries Case 1.} If $f_2\in R2DF_*(T_2)$, then we have $(T_2,f_2,r_2)\in A_3$ and $f_1\notin R2DF_*(T_1)$. Since $f_1(N[r_1])=0$ and $(T,f,r)\in B_5$, we obtain $(T_1,f_1,r_1)\in B_5$.  

{\bfseries Case 2.} If $f_2\notin R2DF_*(T_2)$, we deduce that $(T_2,f_2,r_2)\in A_4$. It is clear that $(T_1,f_1,r_1)\in B_3$ or $B_5$.

Hence, $B_5\subseteq(B_3\circ A_4)\cup (B_5\circ A_3)\cup (B_5\circ A_4).$
\end{proof}
\begin{lemma}\label{lem20}
 $B_6=(B_3\circ A_5)\cup (B_5\circ A_5)\cup (B_6\circ A_3)\cup (B_6\circ A_4)\cup (B_6\circ A_5).$%$B_5=(B_3\circ A_5)\cup (B_4\circ A_5)\cup (B_5\circ A_3)\cup (B_5\circ A_4)\cup (B_5\circ A_5).$
\end{lemma}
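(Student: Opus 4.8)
\textbf{Proof proposal for Lemma~\ref{lem20}.}

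The plan is to follow exactly the same template used in Lemmas~\ref{lem10}--\ref{lem19}: we fix a decomposition $(T,r)=(T_1,r_1)\circ(T_2,r_2)$ with $r=r_1$, prove the easy inclusion $\supseteq$ by checking each of the five product classes on the right-hand side, and then prove the hard inclusion $\subseteq$ by case analysis on $f(r_2)$. The governing data for membership in $B_6$ are: $r\in BVX(T)$, $f\notin R2DF_*(T)$ but $f\in R2DF_*(T^{+2})$ (equivalently, attaching a pendant cut-vertex of value $2$ at $r$ repairs $f$, while attaching one of value $1$ does not). By the definition of $R2DF_*(T^{+2})$ and the block-vertex constraint (item~4 of Theorem~9, via the classes $B_1$--$B_6$), this is governed by $f(N[r])$: for a block-vertex $r$, $f\in R2DF_*(T^{+1})$ exactly when adding weight $1$ at $r$'s new neighbour makes $f(N[r])\ge 2$ fail-safe, and $f\in R2DF_*(T^{+2})$ but not $R2DF_*(T^{+1})$ forces $f(N[r])=0$ together with no other repair at $r$. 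So $B_6$ is precisely the class where $r$ is a block-vertex, $f$ is ``deficient of $2$ at $r$'' in the $R2DF_*$ sense, and $f(N_T[r])=0$.

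For the forward inclusion, since $r=r_1\in BVX(T_1)$ we get $r_2\in CVX(T_2)$, and $f_2=f|_{T_2}$. I would split on $f(r_2)\in\{0,1,2\}$. If $f(r_2)=2$: then $f_1(N_{T_1}[r_1])$ may still be $0$ (the value $2$ sits on the $T_2$ side, at a vertex not in $N_{T_1}[r_1]$), and the ``$+2$ but not $+1$'' deficiency of $f$ on $T$ must come from $T_1$; one checks $f_2\in R2DF_*(T_2)$ so $(T_2,f_2,r_2)\in A_1$, and the $T_1$-side is deficient in the same way with $f_1(N_{T_1}[r_1])=0$, giving $(T_1,f_1,r_1)\in B_6$ — this is the $B_6\circ A_1$... wait, $A_1$ is not in the list, so in fact when $f(r_2)=2$ the neighbour value $2$ already provides a repair, contradicting $f\notin R2DF_*(T)$ unless the deficiency is elsewhere; a careful reading shows this case cannot produce something new outside what is listed, so it must fold into $B_6\circ A_3$ after re-examining which repairs survive. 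If $f(r_2)=1$: then $f_1(N_{T_1}[r_1])=f(N_T[r])-1$, and since we need $f(N_T[r])=0$ we must have contributions reconciled so that the $T_2$ side carries an $A_5$-type or $A_4$-type deficiency while $T_1$ is $B_3$- or $B_5$- or $B_6$-type. If $f(r_2)=0$: then $f_1(N_{T_1}[r_1])=f(N_T[r])=0$, $f_1$ inherits the $+2$-deficiency (so $(T_1,f_1,r_1)\in B_6$), and $f_2$ is either a genuine $R2DF_*$ with $f_2(r_2)=0$ (so $(T_2,f_2,r_2)\in A_3$) or is itself deficient, landing in $A_4$ or $A_5$; these three subcases produce $B_6\circ A_3$, $B_6\circ A_4$, $B_6\circ A_5$. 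The remaining two listed products $B_3\circ A_5$ and $B_5\circ A_5$ arise when the $T_1$-side is already a $R2DF_*$ (of $B_3$ or $B_5$ type) but the composition introduces the $+2$-deficiency because the $A_5$ vertex $r_2$ needs an external $2$.

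The reverse inclusion is routine: for each listed pair $(X\circ Y)$ one verifies that the composed $f$ satisfies $r\in BVX(T)$, $f\notin R2DF_*(T)$, and $f'\in F_2(T')$ with $f'\notin$ (the $+1$ class) — i.e.\ that adding a pendant cut-vertex of weight $2$ at $r$ makes every block-vertex and cut-vertex condition of Theorem~9 hold, while weight $1$ leaves exactly the block-vertex $r$ unsatisfied. Concretely: $B_3\circ A_5$ and $B_5\circ A_5$ contribute because $r_2\in A_5$ means $f_2$ needs a value-$2$ neighbour that only the new pendant at $r$ can supply; $B_6\circ A_3$, $B_6\circ A_4$, $B_6\circ A_5$ contribute because the $+2$-deficiency already on $T_1$ persists and is not worsened. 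The main obstacle — and the step deserving the most care — is the bookkeeping in the forward direction: correctly tracking how $f(N_T[r])$ splits as $f_1(N_{T_1}[r_1])+f_2(r_2)$, and ruling out that a value on the $T_2$-side at $r_2$ or beyond accidentally repairs the block-vertex condition at $r$ (which would drop $f$ out of the $+2$-deficient class into $B_1$ or $B_2$). Once that accounting is pinned down, every subcase maps unambiguously onto one of the five listed products, and the lemma follows.
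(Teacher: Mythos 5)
Your proposal contains the right raw ingredients (the observation that membership in $B_6$ forces $f(N_T[r])=0$, and the split into ``deficiency on the $T_1$ side'' versus ``deficiency at $r_2$''), but the execution of the hard inclusion has genuine gaps. First, your case analysis on $f(r_2)\in\{0,1,2\}$ contradicts your own opening observation: since $r_2\in N_T(r)$, $f(N_T[r])=0$ immediately gives $f(r_2)=0$, so the cases $f(r_2)=2$ and $f(r_2)=1$ simply cannot occur for $(T,f,r)\in B_6$ (if $f(r_2)\ge 1$, every condition repairable by a pendant of weight $2$ at $r$ is already repairable by a pendant of weight $1$, so $f\in R2DF_*(T)\cup R2DF_*(T^{+1})$, contradicting $f\in R2DF_*(T^{+2})$). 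Instead of ruling these cases out, you leave them unresolved --- the claim that the $f(r_2)=2$ case ``must fold into $B_6\circ A_3$'' is not an argument and is in fact false as stated, and the $f(r_2)=1$ discussion is incoherent because it simultaneously assumes $f(N_T[r])=0$. The paper avoids this entirely: it records $f(N[r])=0$, hence $f(r_2)=0$ and $f_1(N_{T_1}[r_1])=0$, and then cases on the status of $f_1$ (namely $f_1\in R2DF_*(T_1)$ giving $B_3$, or $f_1\in R2DF_*(T_1^{+1})$ giving $B_5$, or $f_1\in R2DF_*(T_1^{+2})$ giving $B_6$), pinning down the class of $f_2$ in each case.

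Second, even in the legitimate case $f(r_2)=0$ your bookkeeping is wrong: you assert that $f_1$ ``inherits the $+2$-deficiency, so $(T_1,f_1,r_1)\in B_6$'', which fails when the only deficiency of $f$ sits at the cut-vertex $r_2$; then $f_1$ may be a genuine $R2DF_*$ of $T_1$ (class $B_3$) or only $+1$-deficient (class $B_5$). You mention these possibilities only afterwards, outside the case structure, and you mischaracterize $B_5$ as ``the $T_1$-side is already a $R2DF_*$'' --- by definition $B_5$ consists of tuples with $f_1\in R2DF_*(T_1^{+1})$, so $f_1\notin R2DF_*(T_1)$. Finally, the deductions that actually force the five listed products and exclude others are not carried out: e.g.\ when $(T_1,f_1,r_1)\in B_3$ or $B_5$ you must show $(T_2,f_2,r_2)\in A_5$ and not $A_4$ (otherwise a pendant of weight $1$ at $r$ would repair $f$, putting it in $R2DF_*(T^{+1})$, i.e.\ in $B_5$ rather than $B_6$), and when $(T_1,f_1,r_1)\in B_6$ you must justify that $f_2$ lies in $A_3\cup A_4\cup A_5$ using $f_2(r_2)=0$. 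These are exactly the checks the paper's proof performs; your sketch only gestures at them, so as written the argument does not yet establish the equality.
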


\begin{proof}
Let $(T,r)=(T_1,r_1)\circ (T_2,r_2)$ and $r=r_1$. We first show that $(B_3\circ A_5)\cup (B_5\circ A_5)\cup (B_6\circ A_3)\cup (B_6\circ A_4)\cup (B_6\circ A_5)\subseteq B_6$.
Suppose that  $f_1$ (resp. $f_2$) is a function on $T_1$ (resp. $T_2$). Define $f$ as the function on $T$ with $f|_{T{}_1}=f_1$
and $f|_{T{}_2}=f_2$. It is easy to check the following remarks by definitions.

    \begin{enumerate}[(i)]
        \item For each $i\in\{3,5\}$, if $(T_1,f_1,r_1)\in B_i$ and $(T_2,f_2,r_2)\in A_5$, then $(T_1,f_1,r_1)\circ(T_2,f_2,r_2)\in B_6$. 
         \item For each $3\le i\le5$, if $(T_1,f_1,r_1)\in B_6$ and $(T_2,f_2,r_2)\in A_i$, then $(T_1,f_1,r_1)\circ(T_2,f_2,r_2)\in B_6$. 
    \end{enumerate}

Therefore, we need to prove $B_6\subseteq(B_3\circ A_5)\cup (B_5\circ A_5)\cup (B_6\circ A_3)\cup (B_6\circ A_4)\cup (B_6\circ A_5).$ Let $(T,f,r)\in B_6$ and $(T,f,r)=(T_1,f_1,r_1)\circ (T_2,f_2,r_2)$, then we have that $f\in R2DF_*(T^{+2})$,  $r_1\in BVX(T_1)$ and $f(N[r])=0$. It implies $r_2\in CVX(T_2)$. Consider the following cases.

{\bfseries Case 1.} $f_1\in R2DF_*(T_1).$ Since $f_1(N[r_1])=f(N[r])=0$, we have $(T_1,f_1,r_1)\in B_3$. It implies $(T_2,f_2,r_2)\in A_5$.

{\bfseries Case 2.} $f_1\notin R2DF_*(T_1).$ Since $f_1(N[r_1])=f(N[r])=0$, then we obtain $(T_1,f_1,r_1)\in B_5$ or $B_6$. If $(T_1,f_1,r_1)\in B_5$, we have $f_1\in R2DF_*(T_1^{+1})$. Since $f\in R2DF_*(T^{+2})$, it means that $f_2\in R2DF_*(T_2^{+2})$. Then we deduce $(T_2,f_2,r_2)\in A_5$. If $(T_1,f_1,r_1)\in B_6$, we have $f_1\in R2DF_*(T_1^{+2})$. Since $(T,f,r)\in B_6$, we deduce that $f_2(r_2)=0$. So we obtain $(T_2,f_2,r_2)\in A_3$, $A_4$ or $A_5$.

Hence, $B_6\subseteq(B_3\circ A_5)\cup (B_5\circ A_5)\cup (B_6\circ A_3)\cup (B_6\circ A_4)\cup (B_6\circ A_5).$
\end{proof}

The final step is to define the initial vector. In this case, for a block-cutpoint graphs, the only basis graph is a single vertex. It is clear that  if $v$ is a cut-vertex, then the initial vector is $(2,1,\infty,\infty,0,\infty)$; if $v$ is a block-vertex and its corresponding block is a block of type 0, then the initial vector is $(\infty,\infty,0,\infty,\infty,\infty)$; if $v$ is a block-vertex and its corresponding block is a block of type 1, then the initial vector is $(\infty,1,\infty,\infty,\infty,0)$; if $v$ is a block-vertex and its corresponding block is a block of type 2, then the initial vector is $(\infty,\infty,\infty,\infty,\infty,0)$. Among them, $'\infty'$ means undefined. Now, we are ready to present the algorithm.
\\
\begin{algorithm*}[H]
    \KwIn{A connected block graph $G$($G\neq K_n$) and its corresponding block-cutpoint graph $T=(V,E)$ with a tree ordering $v_1,v_2,\cdots,v_n$.}
    \KwOut{The Roman $\{2\}$-dominating  number $\gamma_{\{R2\}}(G)$.}
    \For{$i:=1$ \KwTo $n$}{
            \uIf{$v_i$ is a cut-vertex}{
            initialize  $h [i,1..6]$ to $[2,1,\infty,\infty,0,\infty]$ \;
            }
            \uElseIf{$v_i$ is a block of type 0}{
             initialize  $h [i,1..6]$ to $[\infty,\infty,0,\infty,\infty,\infty]$ \;
            }
             \uElseIf{$v_i$ is a block of type 1}{
             initialize  $h [i,1..6]$ to $[\infty,1,\infty,\infty,\infty,0]$ \;
            }
            \uElse{
            initialize  $h [i,1..6]$ to $[\infty,\infty,\infty,\infty,\infty,0]$\;
            }
    }
    \For{$j:=1$ \KwTo $n-1$}
    {
        $v_k=F(v_j)$;\\
        \uIf{$v_k$ is a cut-vertex}{
        $h[k,1]=\min\{h[k,1]+ h[j,1], h[k,1]+ h[j,2], h[k,1]+ h[j,3], h[k,1]+ h[j,4], h[k,1]+ h[j,5], h[k,1]+ h[j,6]\}$\;
        $h[k,2]=\min\{h[k,2]+ h[j,1], h[k,2]+ h[j,2], h[k,2]+ h[j,3], h[k,2]+ h[j,4], h[k,2]+ h[j,5]\}$\;
        $h[k,3]=\min\{h[k,3]+ h[j,1], h[k,3]+ h[j,2], h[k,3]+ h[j,3], h[k,4]+ h[j,1], h[k,4]+ h[j,2], h[k,5]+ h[j,1]\}$\;
        $h[k,4]=\min\{h[k,4]+ h[j,3], h[k,5]+ h[j,2]\}$\;
        $h[k,5]=\min\{h[k,5]+ h[j,3]\}$\;
        }
        \uElse{
         $h[k,1]=\min\{h[k,1]+ h[j,1], h[k,1]+ h[j,2], h[k,1]+ h[j,3], h[k,1]+ h[j,4], h[k,1]+ h[j,5], h[k,2]+ h[j,1], h[k,2]+ h[j,2], h[k,3]+ h[j,1], h[k,4]+ h[j,1], h[k,4]+ h[j,2], h[k,5]+ h[j,1], h[k,6]+ h[j,1]\}$\;
        $h[k,2]=\min\{h[k,2]+ h[j,3], h[k,2]+ h[j,4], h[k,3]+ h[j,2], h[k,5]+ h[j,2]\}$\;
        $h[k,3]=\min\{h[k,3]+ h[j,3]\}$\;
        $h[k,4]=\min\{h[k,2]+ h[j,5], h[k,4]+ h[j,3], h[k,4]+ h[j,4], h[k,4]+ h[j,5],h[k,6]+ h[j,2] \}$\;
        $h[k,5]=\min\{h[k,3]+ h[j,4], h[k,5]+ h[j,3], h[k,5]+ h[j,4]\}$\;
        $h[k,6]=\min\{h[k,3]+ h[j,5], h[k,5]+ h[j,5], h[k,6]+ h[j,3], h[k,6]+ h[j,4], h[k,6]+ h[j,5]\}$\;
        
        }

    }
    \Return $\gamma_{\{R2\}}(G)=\min\{h[n,1], h[n,2], h[n,3]\}$\;
    \caption{ROMAN $\{2\}$-DOM-IN-BLOCK}
\end{algorithm*}
From the above argument, we can obtain the following theorem.

\begin{theorem}\label{thm3}
    Algorithm ROMAN $\{2\}$-DOM-IN-BLOCK can output the Roman $\{2\}$-domination  number of any block graphs $G=(V,E)$ in linear time $O(n)$, where $n=|V|$.
\end{theorem}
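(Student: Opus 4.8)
\textbf{Proof proposal for Theorem (correctness and linear running time of ROMAN \{2\}-DOM-IN-BLOCK).}

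The plan is to argue that the array $h$ maintained by the algorithm is exactly the vector of minimum weights of the eleven tuple classes $A_1,\dots,A_5,B_1,\dots,B_6$ for the subtree processed so far, and then read off $\gdr(G)$ from the final entry. First I would recast the algorithm in the Wimer framework: to each vertex $v_i$ of $T=T_G$ we associate a tree-tuple whose root is $v_i$, and we process vertices in the tree ordering so that when we handle a child $v_j$ with father $v_k$, the subtree rooted at $v_k$ is extended by composing with the subtree rooted at $v_j$. The crucial correspondence is that the six entries $h[k,1],\dots,h[k,6]$ store, respectively, the minimum weight over tuples in classes $A_1,\dots,A_5$ (or $B_1,\dots,B_6$, depending on whether $v_k$ is a cut-vertex or a block-vertex) that are realizable on the current subtree, with $\infty$ meaning the class is empty. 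The base cases are exactly the four initial vectors stated before the algorithm: a single cut-vertex can take value $2$ (weight $2$, class $A_1$), value $1$ (class $A_2$), or value $0$ with an empty neighbourhood, which is not yet dominated, hence lands in $A_5$ (weight $0$); and the three block-vertex base vectors encode that a type-$0$ block-vertex must carry $f_*(h)=0$ and is already fine ($B_3$), a type-$1$ block-vertex may carry $1$ ($B_2$) or $0$-and-not-yet-satisfied ($B_6$), and a type-$2$ block-vertex must carry $0$ and is not yet satisfied ($B_6$); all of this is forced by Theorem~9, items 1--4.

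Next I would verify that each assignment line of the algorithm is the numerical image of the corresponding set identity from Lemmas~\ref{lem10}--\ref{lem20}. For instance, the cut-vertex update for $h[k,1]$ mirrors Lemma~\ref{lem10}, $A_1=(A_1\circ B_1)\cup\cdots\cup(A_1\circ B_6)$: the weight of a composed tuple is the sum of the weights of the two pieces (composition adds one edge but no weight), so the minimum weight in $A_1$ for the extended tree is $\min_{1\le i\le6}\{\,\mathrm{oldmin}(A_1)+\mathrm{min}(B_i)\,\}$, which is exactly the line $h[k,1]=\min\{h[k,1]+h[j,3],\dots\}$ once one notes that the $B_i$ classes of the child $v_j$ are stored in $h[j,\cdot]$ — and here the indexing reflects that for a block-vertex child the roles of the six slots are the $B$-classes, whereas for a cut-vertex child they are the $A$-classes, which is why the two branches of the inner \texttt{for} loop reference different slot patterns. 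One does the same bookkeeping for each of the remaining ten lines against Lemmas~\ref{lem11}--\ref{lem20}; each is a routine transcription once the slot-to-class dictionary is fixed. A mild subtlety to record explicitly: the child $v_j$, having been fully processed, has had all of its own children already folded in, so $h[j,\cdot]$ is the final (not partial) class-minimum vector for the subtree rooted at $v_j$; this is what makes the single left-to-right pass correct, and it follows from the tree ordering because every descendant of $v_j$ precedes $v_j$.

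Then I would close the induction: by induction on the number of processed edges, after the inner loop terminates $h[n,\cdot]$ holds the class-minima for all of $T$ rooted at $v_n$. Since $v_n$ is (after a harmless choice of ordering) a cut-vertex or a block-vertex, the valid IR2DF$_*$'s of the whole tree are precisely those in $A_1\cup A_2\cup A_3$ (resp.\ $B_1\cup B_2\cup B_3$) — the tuples in the "$+1$/$+2$" classes are by definition not yet $R2DF_*$'s — and by Theorem~9 these correspond bijectively, weight-preservingly, to R2DF's of $G$; hence $\gdr(G)=\min\{h[n,1],h[n,2],h[n,3]\}$, which is what the algorithm returns. (If one prefers not to assume $v_n$ is of a particular type, add a dummy root or observe that the same three slots always carry the "already-satisfied" classes.) For the running time: computing $T_G$ from $G$ is linear by the cited standard construction; initialization is $O(n)$; the inner loop runs $n-1$ times and each iteration does a constant number of additions and comparisons over a constant-size vector; classifying each block as type $0,1,2$ is a one-time $O(n)$ scan. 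Hence the total is $O(n)$, where one should note $n=|V(T_G)|$ is linear in the size of $G$. The main obstacle is not any single deep step but the care needed in (a) pinning down the slot-to-class dictionary so that it is consistent across the cut-vertex and block-vertex branches and across "parent stores $A$-classes / child contributes $B$-classes" versus the reverse, and (b) checking that the weight of a composition is additive in the block-cutpoint setting — i.e.\ that $f_*$-weight behaves additively under the tree decomposition, which is exactly the content of the identity $f(V_G)=f_*(V_{T_G})$ from the proof of Theorem~9 together with the fact that composition introduces no new block-vertices or cut-vertices inside either piece.
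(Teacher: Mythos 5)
Your proposal is correct and follows exactly the argument the paper intends: the paper's "proof" is just the remark "from the above argument," i.e.\ Lemmas~\ref{lem10}--\ref{lem20} plus the initial vectors and Theorem~9, and your write-up (slot-to-class dictionary, base cases matching the four initial vectors, each update line as the numerical image of the corresponding set identity with additive weights, induction along the tree ordering, reading $\gamma_{\{R2\}}(G)$ from slots 1--3, and the $O(n)$ accounting including construction of $T_G$) is precisely that argument made explicit. The only blemish is a transcription slip in your illustrative line for the cut-vertex update of $h[k,1]$ (the algorithm sums $h[k,1]$ with $h[j,1],\dots,h[j,6]$, not starting at $h[j,3]$), which does not affect the substance.
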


%\bibliographystyle{amsplain}
%\bibliographystyle{abbrv}
%\bibliography{refer}

%\bibliographystyle{amsplain}
%\bibliography{ref}
\end{document}